\newtheorem{theorem}{Theorem}[section]
\newtheorem{lemma}[theorem]{Lemma}
\theoremstyle{definition}
\newtheorem{example}[theorem]{Example}
\newtheorem{proposition}[theorem]{Proposition}
\newtheorem{problem}[theorem]{Problem}
\newtheorem{corollary}[theorem]{Corollary}
\theoremstyle{remark}
\numberwithin{equation}{section}
\begin{document}

\title[Connectivity and Irreducibility of FUNTF spaces]{Connectivity and Irreducibility of Algebraic Varieties of Finite Unit Norm Tight Frames}

\author{Jameson Cahill}
\address{Department of Mathematical Sciences, New Mexico State University, Las Cruces, New Mexico, 88003}
\email{jamesonc@nmsu.edu}
\author{Dustin G.\ Mixon}
\address{Department of Mathematics and Statistics, Air Force Institute of Technology, Wright-Patterson AFB, Ohio, 45433}
\email{dustin.mixon@afit.edu}

\author{Nate Strawn}
\address{Department of Mathematics and Statistics, Georgetown University, Washington, District of Columbia, 20007}
\email{nate.strawn@georgetown.edu}

\subjclass[2010]{Primary 42C15, 47B99; Secondary 14M99}
\keywords{frame theory, real algebraic geometry}

\begin{abstract}
In this paper, we settle a long-standing problem on the connectivity of spaces of finite unit norm tight frames (FUNTFs), essentially affirming a conjecture first appearing in \cite{DS03}. Our central technique involves continuous liftings of paths from the polytope of eigensteps (see \cite{CFMPS12}) to spaces of FUNTFs. After demonstrating this connectivity result, we refine our analysis to show that the set of nonsingular points on these spaces is also connected, and we use this result to show that spaces of FUNTFs are irreducible in the algebro-geometric sense, and also that generic FUNTFs are full spark.
\end{abstract}

\maketitle

\section{Introduction}

\subsection{Background}
Frame theory began with the definition of frames by Duffin and Schaeffer \cite{DS52}, and today, frames provide a rich source of redundant representations and transformations. A \emph{frame} is a collection of vectors $\{f_n\}_{n\in I}$ in a Hilbert space $\mathcal{H}$ for which there exists strictly positive constants $A$ and $B$ satisfying
\[
A\Vert x\Vert_\mathcal{H}^2\leq\sum_{n\in I}\vert\langle x, f_n\rangle_{\mathcal{H}}\vert^2\leq B\Vert x\Vert_\mathcal{H}^2
\]
for all $x\in\mathcal{H}$, where $\langle\cdot,\cdot\rangle_\mathcal{H}$ is the inner product on $\mathcal{H}$ which induces the norm $\|\cdot\|_\mathcal{H}$. We call the frame \emph{tight} if we can take $A=B$. If the index set $I$ is finite, then $\mathcal{H}\cong \mathbb{F}^d$, where $\mathbb{F}=\mathbb{R}$ or $\mathbb{C}$, and if $\Vert f_n\Vert_\mathcal{H}=1$, we say that the frame is \emph{unit norm}. If the frame is finite, unit norm, and tight, we call it a \emph{finite unit norm tight frame (FUNTF)}. To put it simply, a FUNTF is the collection of column vectors in a matrix whose row vectors are orthogonal with equal norm, and whose column vectors each have unit norm.

Much of the early work on frames focused on infinite-dimensional frames: Fourier frames \cite{DS52}, Gabor frames \cite{BW94}, and wavelet frames \cite{DBAZ03}. In recent years, finite frames have been studied more rigorously because of their applications (for example, in wireless telecommunications \cite{SH03} and sigma-delta quantization \cite{BPY06}). While applications for frames abound, some of the most basic questions concerning frames remain unresolved. 

\subsection{The Frame Homotopy Problem}
The sets of real and complex FUNTFs of $N$ vectors in $d$ dimensions are denoted 
\[
\mathcal{F}_{N,d}^\mathbb{R}\text{ and } \mathcal{F}_{N,d}^\mathbb{C}
\]
respectively. The \emph{Frame Homotopy Problem} asks for which $N$ and $d$ these spaces are path-connected. Speculation on all the possible pairs of $N$ and $d$ for which path-connecivity holds was first formally enunciated in Conjectures 7.6 and 7.7 of \cite{DS03}, but the problem itself was first posed by D.\ R.\ Larson in a NSF Research Experiences for Undergraduates Summer Program in 2002. Though there are a large number of degrees of freedom in the spaces of FUNTFs, it has been surprisingly difficult to analytically construct anything but the simplest paths through these spaces. Moreover, many of these spaces have singularities which complicate the analysis of their geometry. 

The first step forward for the homotopy problem was shown in \cite{DS03}. The identification of FUNTFs in $\mathbb{R}^2$ with closed planar chains having links of length one in Theorem 3.3 of \cite{BF03} made it possible to obtain the connectivity result of \cite{DS03} using connectivity results for these chains \cite{LW95}. Such connectivity results were studied earlier because of the relevance with the well-studied problem of robotic motion planning. However, the analogue of the characterization in $\mathbb{R}^3$ is not so simple. This is because the identification in $\mathbb{R}^2$ is essentially obtained by the identification of the circle $\mathbb{S}^1$ with the real projective space $\mathbb{RP}^1$ via the map
\[
(x_1,x_2)\longmapsto \begin{pmatrix}
                            x_1^2 & x_1 x_2\\
                            x_1 x_2 & x_2^2
                            \end{pmatrix}.
\]
In $\mathbb{R}^3$, the sphere $\mathbb{S}^2$ is not identifiable with $\mathbb{RP}^2$. Closed chains are still releveant in this higher dimensional situation, but the configuration space is a subset of products of $\mathbb{RP}^2$ embedded into the space of $3\times 3$ symmetric matrices. This makes the motion much more difficult to imagine and the problem is no longer relatable to a well-studied area where connectivity results would have been considered. 

The most recent contribution to the homotopy problem came in 2009 with the work of Giol, Kovalev, Larson, Nguyen, and Tenner \cite{GKLNT09}. This work demonstrates the connectivity of certain families of projection operators which correspond to FUNTFs of $2d$ vectors in $d$ complex dimensions. Aside from this work, a lack of techniques for constructing paths has basically made it impossible to move forward on the problem over the last decade.

\subsection{Main Results}

In this paper, we completely resolve the Frame Homotopy Problem. In particular, we establish the following theorems:

\begin{theorem}\label{cmplxconnect}
The space $\mathcal{F}_{N,d}^{\mathbb{C}}$ is path-connected for all $d$ and $N$ satisfying $d\geq 1$ and $N\geq d$. 
\end{theorem}

\begin{theorem}\label{realconnect}
The space $\mathcal{F}_{N,d}^\mathbb{R}$ is path-connected for all $d$ and $N$ satisfying $d\geq 2$ and $N\geq d+2$.
\end{theorem}

The proofs of these results are distinct since the unitary group $\mathcal{U}(d)$ is connected and the orthogonal group $\mathcal{O}(d)$ is not. This makes the proof of Theorem \ref{cmplxconnect} substantially simpler. However, it is clear that Theorem \ref{realconnect} implies Theorem \ref{cmplxconnect} if any complex FUNTF is path-connected to some real FUNTF. While our methods may be used to produce such a reduction, we establish the results independently.

Let $\mathcal{G}^{\mathbb{F}}_{N,d}$ denote the set of rank $d$ orthogonal projections on $\mathbb{F}^N$ with all diagonal entries equal to $d/N$. There is a quotient map from $\mathcal{F}^{\mathbb{F}}_{N,d}$ to $\mathcal{G}^{\mathbb{F}}_{N,d}$ which preserves connectivity (see \cite{DS03}), so we also solve the main problem presented in \cite{GKLNT09} as a corollary of Theorems \ref{cmplxconnect} and \ref{realconnect}:

\begin{corollary}
The space $\mathcal{G}^{\mathbb{C}}_{N,d}$ is path-connected for all $N$ and $d$ satisfying $N\geq d\geq 1$ and $\mathcal{G}^{\mathbb{R}}_{N,d}$ is path-connected for all $N$ and $d$ satisfying $d\geq 2$ and $N\geq d+2$.
\end{corollary}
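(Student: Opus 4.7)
The plan is to exhibit a continuous surjection $\pi \colon \mathcal{F}_{N,d}^{\mathbb{F}} \to \mathcal{G}_{N,d}^{\mathbb{F}}$ and then invoke Theorems \ref{cmplxconnect} and \ref{realconnect}. Identifying a FUNTF $\{f_n\}_{n=1}^N$ with the matrix $F \in \mathbb{F}^{d \times N}$ whose $n$th column is $f_n$, the tightness and unit-norm conditions become $FF^* = (N/d) I_d$ and $(F^*F)_{nn} = 1$ for every $n$. I would define $\pi(F) = (d/N)\, F^*F$, which is manifestly continuous.

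First I would check that $\pi$ actually lands in $\mathcal{G}_{N,d}^{\mathbb{F}}$. The matrix $(d/N) F^*F$ is self-adjoint, has rank $d$, and has diagonal entries identically equal to $d/N$; it is idempotent because
\[
\left(\tfrac{d}{N} F^*F\right)^2 = \tfrac{d^2}{N^2}\, F^*(FF^*) F = \tfrac{d}{N}\, F^*F.
\]

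Next I would verify surjectivity. Any $P \in \mathcal{G}_{N,d}^{\mathbb{F}}$ factors as $P = V^*V$ for some $V \in \mathbb{F}^{d \times N}$ with $VV^* = I_d$, obtained by taking $V^*$ to have columns forming an orthonormal basis of the range of $P$. Setting $F = \sqrt{N/d}\, V$ then yields a preimage: indeed $FF^* = (N/d) I_d$, and $\|f_n\|^2 = (N/d)\, P_{nn} = 1$ by the diagonal condition on $P$, so $F \in \mathcal{F}_{N,d}^{\mathbb{F}}$ with $\pi(F) = P$.

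With $\pi$ a continuous surjection, the corollary is immediate: continuous images of path-connected spaces are path-connected, so Theorem \ref{cmplxconnect} transfers connectivity to $\mathcal{G}_{N,d}^{\mathbb{C}}$ for $N \geq d \geq 1$, and Theorem \ref{realconnect} transfers it to $\mathcal{G}_{N,d}^{\mathbb{R}}$ for $N \geq d+2 \geq 4$. There is no genuine obstacle here; the only subtlety worth emphasizing is that the diagonal constraint $P_{nn} = d/N$ built into the definition of $\mathcal{G}_{N,d}^{\mathbb{F}}$ is precisely what ensures each preimage $F$ is unit-norm, so no additional normalization or case analysis is required.
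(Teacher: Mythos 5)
Your proof is correct and takes essentially the same route as the paper, which simply cites the existence of the natural projection $F\mapsto (d/N)F^*F$ from $\mathcal{F}^{\mathbb{F}}_{N,d}$ onto $\mathcal{G}^{\mathbb{F}}_{N,d}$ and lets the continuity-plus-surjectivity argument do the rest; you have just made explicit what the paper leaves implicit.
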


For $N$ and $d$ relatively prime, it was shown in \cite{DS03} that $\mathcal{F}_{N,d}^\mathbb{F}$ has no singularities for $\mathbb{F}=\mathbb{R}$ or $\mathbb{F}=\mathbb{C}$, and hence Theorems \ref{cmplxconnect} and \ref{realconnect} immediately imply that such an $\mathcal{F}_{N,d}^\mathbb{F}$ is an irreducible real algebraic variety. This raises the interesting possibility that the singularities in a general space of FUNTFs might be due to intersections of numerous irreducible components in $\mathcal{F}_{N,d}^\mathbb{F}$. Indeed, for $\mathcal{F}_{4,2}^\mathbb{R}$, this is evidently the case given the extensive analysis of this space in \cite{DS03}. However, by refining our connectivity result, we show that this is simply not the case in general, and that $\mathcal{F}_{4,2}^\mathbb{R}$ is exceptional in this regard.

\begin{theorem}\label{irreducible}
$\mathcal{F}_{N,d}^\mathbb{C}$ is an irreducible real algebraic variety for all $N$ and $d$ satisfying $N\geq d\geq 1$. $\mathcal{F}_{N,d}^\mathbb{R}$ is an irreducible real algebraic variety for all $N$ and $d$ satisfying $N\geq d+2\geq 4$, except when $N=4$ and $d=2$.
\end{theorem}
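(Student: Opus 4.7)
The plan is to deduce irreducibility from the refined connectivity statement (that the nonsingular locus $\mathcal{F}^{\mathbb{F},\mathrm{ns}}_{N,d}$ is path-connected) established in the preceding section. The underlying principle is the standard real-algebraic-geometry fact: a real algebraic set whose nonsingular locus is a connected real manifold of pure dimension equal to the variety's dimension must be irreducible. The overall structure of the argument is the same for both $\mathbb{F} = \mathbb{R}$ and $\mathbb{F} = \mathbb{C}$, the only difference being the range of $(N,d)$ guaranteed by the connectivity input.

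First I would record that $\mathcal{F}^{\mathbb{F}}_{N,d}$ is a genuine real algebraic subvariety of $\mathbb{R}^{Nd}$ or $\mathbb{R}^{2Nd}$: it is cut out by the unit-norm relations $\|f_n\|^2 - 1 = 0$ for $n = 1,\ldots,N$ together with the entries of $\sum_{n=1}^N f_n f_n^{\ast} - (N/d) I_d = 0$, each of which is a real polynomial identity in the real (resp.\ real and imaginary) coordinates of the $f_n$. The Jacobian criterion applied to this system then shows that $\mathcal{F}^{\mathbb{F},\mathrm{ns}}_{N,d}$ is a smooth real manifold of a single common dimension $\delta = \dim \mathcal{F}^{\mathbb{F}}_{N,d}$, and that it is Zariski-dense in $\mathcal{F}^{\mathbb{F}}_{N,d}$.

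With these inputs in hand, I would argue by contradiction. Suppose $\mathcal{F}^{\mathbb{F}}_{N,d} = V_1 \cup V_2$ where $V_1, V_2$ are proper real algebraic subvarieties with $V_i \not\subset V_j$. The intersection $V_1 \cap V_2$ is a closed subvariety of local dimension strictly less than $\delta$, so no point of $V_1 \cap V_2$ can admit a Euclidean neighborhood in $\mathcal{F}^{\mathbb{F}}_{N,d}$ diffeomorphic to an open subset of $\mathbb{R}^\delta$: two closed sheets meeting along a lower-dimensional locus cannot be a smooth $\delta$-manifold near the meeting points. Hence every nonsingular point of $\mathcal{F}^{\mathbb{F}}_{N,d}$ lies in exactly one of $V_1, V_2$, yielding the decomposition
\[
\mathcal{F}^{\mathbb{F},\mathrm{ns}}_{N,d} = \bigl(\mathcal{F}^{\mathbb{F},\mathrm{ns}}_{N,d} \cap (V_1 \setminus V_2)\bigr) \sqcup \bigl(\mathcal{F}^{\mathbb{F},\mathrm{ns}}_{N,d} \cap (V_2 \setminus V_1)\bigr)
\]
into two disjoint, relatively open, nonempty pieces. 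This disconnection contradicts the refined connectivity theorem, so $\mathcal{F}^{\mathbb{F}}_{N,d}$ must be irreducible.

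The main obstacle is the subtlety of working in the real setting: an irreducible real algebraic variety can in general have Euclidean-disconnected real points, so the implication ``connected nonsingular locus $\Rightarrow$ irreducible'' is not automatic and must be justified through pure-dimensionality of $\mathcal{F}^{\mathbb{F},\mathrm{ns}}_{N,d}$ and Zariski-density in each putative component. The hypotheses $N \geq d \geq 1$ (complex) and $N \geq d+2 > 4$ (real) enter precisely so that the refined connectivity input is available; the exclusion of $(N,d)=(4,2)$ in the real case is consistent with the multi-component behavior of $\mathcal{F}^{\mathbb{R}}_{4,2}$ noted in \cite{DS03}, and indeed shows that our method cannot do better there without violating the very mechanism of the proof.
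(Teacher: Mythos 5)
Your overall strategy matches the paper's: deduce irreducibility from the path-connectivity of the nonsingular locus established in Section~5. But there is a genuine gap in how you handle the second ingredient, density of the nonsingular locus, which you assert ``follows from the Jacobian criterion'' and which you describe as automatic Zariski-density. This is not automatic over $\mathbb{R}$, and the paper is at pains to isolate it as a separate hypothesis (Proposition~\ref{irredprop}(ii)). Over $\mathbb{R}$, it is entirely possible for a real algebraic set to have a singular locus that contributes a whole Euclidean-open region or an extra isolated piece: Figure~\ref{figure1}(b) in the paper is an irreducible example whose nonsingular points are \emph{not} dense, and if one allows reducible examples (e.g.\ the union of a circle and an isolated real point cut out by $x^2+y^2$) the nonsingular locus can be connected while the variety is reducible. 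Your contradiction argument secretly uses density at the moment you declare the two pieces $\mathcal{F}^{\mathbb{F},\mathrm{ns}}_{N,d}\cap(V_1\setminus V_2)$ and $\mathcal{F}^{\mathbb{F},\mathrm{ns}}_{N,d}\cap(V_2\setminus V_1)$ to be \emph{nonempty}; without knowing that the nonsingular points are not contained in a proper subvariety (equivalently that they are dense), either piece could be empty and no disconnection arises.

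The paper closes exactly this gap with Lemma~\ref{NODdense}, which is not a formal consequence of the Jacobian criterion but a separate geometric argument specific to FUNTF varieties: given an OD frame, it partitions the frame into maximal NOD subcollections and uses the ``spinning'' move of Lemma~\ref{spinning} to rotate a pair of vectors drawn from two different parts inside their span. This preserves the frame operator while creating nonzero correlation between the parts, so arbitrarily small such rotations produce nearby NOD frames, giving Euclidean density of the nonsingular locus. Your dimension-counting justification for why nonsingular points cannot lie in $V_1\cap V_2$ is informal but is essentially the same fact the paper extracts from Theorem~I.5.1 of Hartshorne, so that part is acceptable as a sketch. The missing piece you need to supply is the density lemma and its perturbation proof.
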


This result tells us that the singularities of spaces of FUNTFs either result from the space folding in on itself or developing cusps. Characterizing the local geometry around these singularities remains an open problem.

\begin{problem}
Describe the local geometry around singular points of $\mathcal{F}_{N,d}^\mathbb{F}$. 
\end{problem}

Finally, we use these methods to show that generic FUNTFs are \emph{full spark} (see \cite{ACM12}), i.e., they have the property that every subcollection of $d$ frame elements is linearly independent:

\begin{theorem}\label{fullspark}
A generic frame in $\mathcal{F}^{\mathbb{C}}_{N,d}$ is full spark for every $d$ and every $N\geq d$. A generic frame in $\mathcal{F}^{\mathbb{R}}_{N,d}$ is full spark for every $d$ and every $N\geq d$.
\end{theorem}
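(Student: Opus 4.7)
The plan is to recognize the complement of the full-spark locus as a finite union of Zariski-closed subvarieties of $\mathcal{F}_{N,d}^{\mathbb{F}}$, exploit the permutation action of $S_N$ on the frame vectors to reduce to a single such subvariety, then invoke irreducibility (Theorem \ref{irreducible}) to conclude that the full-spark locus is dense Zariski-open, once a single full-spark FUNTF has been exhibited.

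For each $d$-element subset $S\subset\{1,\ldots,N\}$, let $F_S$ denote the $d\times d$ submatrix of $F=(f_1,\ldots,f_N)$ whose columns are indexed by $S$, and set
\[
V_S=\bigl\{F\in\mathcal{F}_{N,d}^{\mathbb{F}}:\det F_S=0\bigr\}.
\]
This is Zariski-closed in $\mathcal{F}_{N,d}^{\mathbb{F}}$, being cut out by a single polynomial equation. The non-full-spark locus is the finite union $V=\bigcup_S V_S$. The symmetric group $S_N$ acts on $\mathcal{F}_{N,d}^{\mathbb{F}}$ by permutation of the frame vectors, and this action permutes the $V_S$ transitively, so showing that a single $V_S$ is a proper subvariety implies that $V$ itself is a proper subvariety. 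By Theorem \ref{irreducible}, $\mathcal{F}_{N,d}^{\mathbb{F}}$ is irreducible in the stated ranges, so a proper Zariski-closed subset has positive codimension, and the full-spark locus $\mathcal{F}_{N,d}^{\mathbb{F}}\setminus V$ is dense and Zariski-open.

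The remaining step, which I expect to be the main obstacle, is to exhibit a single full-spark FUNTF to witness that some $V_S$ is proper. In the complex case, this is provided by the explicit full-spark harmonic FUNTF constructions in \cite{ACM12}, which exist for all $N\geq d$. In the real case, one natural approach is to adapt the eigenstep lifting machinery developed in this paper: starting from any FUNTF and following a generic continuous curve in the polytope of eigensteps, one argues that the finitely many determinants $\det F_S$ depend real-analytically on the lifted parameters and so cannot all vanish identically along the curve, producing full-spark frames arbitrarily close to the initial one. Finally, the regimes excluded from Theorem \ref{irreducible}, namely $\mathcal{F}_{d,d}^{\mathbb{R}}$, $\mathcal{F}_{d+1,d}^{\mathbb{R}}$, and $\mathcal{F}_{4,2}^{\mathbb{R}}$, are handled individually: orthonormal bases ($N=d$) are trivially full spark, and the remaining families have finitely many irreducible components on each of which the same subvariety argument applies once a full-spark representative is exhibited.
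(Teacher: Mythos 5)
Your decomposition of the non-full-spark locus as $V=\bigcup_{S}V_S$, with each $V_S$ Zariski-closed in $\mathcal{F}_{N,d}^{\mathbb{F}}$, is correct, and exhibiting a single full-spark FUNTF (you cite \cite{ACM12}; the paper uses Theorem~\ref{fullsparkexists}) indeed shows $V$ is a \emph{proper} Zariski-closed subset; the $S_N$-reduction is dispensable but harmless. The gap is the inference ``$\mathcal{F}_{N,d}^{\mathbb{F}}$ irreducible $\Rightarrow$ a proper Zariski-closed subset has positive codimension $\Rightarrow$ its complement is dense.'' That chain is valid for irreducible varieties over $\mathbb{C}$, but the varieties here are \emph{real}, and over $\mathbb{R}$ it is false --- a point the paper itself flags. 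The curve $x^4+y^4=x^2+y^2$ (Figure~\ref{figure1}(b)) is irreducible, yet the Zariski-open complement of its isolated origin is not dense in the Euclidean topology. Irreducibility alone gives no control over how a real variety sits in the standard topology, which is the topology in which the paper defines genericity.

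What the paper uses instead is Proposition~\ref{realgeneric}: if the nonsingular points of a real variety form a dense \emph{connected} subset, then removing any real subvariety either empties the variety or leaves a generic set. Those hypotheses are supplied by Corollaries~\ref{cmplxNODconnect} and \ref{realNODconnect} together with Lemma~\ref{NODdense}, and the proof of Proposition~\ref{realgeneric} propagates the vanishing of the defining polynomial from one analytic patch across the connected nonsingular locus via the identity principle --- this is the mechanism your appeal to Theorem~\ref{irreducible} cannot replace. So the fix is to invoke Proposition~\ref{realgeneric} rather than Theorem~\ref{irreducible}. The same defect infects your closing remark: running ``the same subvariety argument'' on an irreducible component still requires the real-genericity step, and the excluded real regime is in fact all $d\leq 2$ and all $N\leq d+1$, not only the three families you list.
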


\subsection{Organization} 

In Section~2, we fix our notation and provide the background on necessary frame theory concepts. In Section~3, we establish our key technical tool, a lifting lemma for paths in spaces of eigensteps. The proofs of Theorems~\ref{cmplxconnect} and~\ref{realconnect} are demonstrated in Section~4. In Section \ref{sec:RAG} we discuss the real algebraic geometry of the FUNTF varieties. In particular, we introduce many definitions and results from real algebraic geometry, we determine the dimension of each $\mathcal{F}_{N,d}^\mathbb{F}$ as a real algebraic variety, we exhibit a dense subset of nonsingular points on these varieties, and we show that connectivity of this dense subset implies that $\mathcal{F}_{N,d}^\mathbb{F}$ is an irreducible real algebraic variety. In Section \ref{sec:nod-connect} we show when these dense subsets in the $\mathcal{F}_{N,d}^\mathbb{F}$ form a path-connected set, thereby demonstrating when $\mathcal{F}_{N,d}^\mathbb{F}$ is an irreducible real algebraic variety. In Section~7, we conclude with the consequences of our path-connectivity results, including the proofs of Theorems~\ref{irreducible} and~\ref{fullspark}.

\section{Prelimaries and Notation}

In general, we work with vectors in $\mathbb{F}^d$, where $\mathbb{F}$ is either the set $\mathbb{R}$ of real numbers or the set $\mathbb{C}$ of complex numbers. We assume that the inner product on $\mathbb{F}^d$ is the standard (symmetric or Hermitian) inner product. The set $M_{N,d}^\mathbb{F}$ consists of all $d$ by $N$ matrices (thought of as lists of columns) with entries in $\mathbb{F}$. For $X\in M_{N,d}^\mathbb{C}$, we let $\Re(X)\in M_{N,d}^\mathbb{R}$ and $\Im(X)\in M_{N,d}^\mathbb{R}$ denote the matrices obtained by taking the real and imaginary parts the entries, respectively. 

We let $I_k$ denote the $k\times k$ identity matrix, ${\bf 1}_k$ denotes the vector in $\mathbb{R}^k$ with entries all equal to $1$, and ${\bf 0}$ denotes a block of zero entries whose dimensions should be inferred from context. For any $k\times k$ matrix $A$, we use $\text{diag}(A)$ to denote the vector in $\mathbb{F}^k$ with entries equal to the diagonal entries of $A$ in order, and for any vector $v\in\mathbb{F}^k$, we let $\text{diag}(v)$ denote the $k\times k$ matrix with diagonal entries coinciding with the entries of $v$ and off-diagonal entries equal to zero. For a given collection of vectors $\{v_i\}_{i\in I}\subset \mathbb{F}^k$, we let $\text{span}\{v_i\}_{i\in I}$ denote the linear span of the collection, and we use $\text{span}^\perp\{v_i\}_{i\in I}$ to denote the orthogonal complement of the linear span in $\mathbb{F}^k$. We shall sometimes use $\{e_n\}_{n=1}^d$ to denote the standard orthonormal basis of $\mathbb{F}^k$.

For $k\geq 1$, we use $\mathcal{U}(k)$ to denote the Lie group of $k$ by $k$ unitary matrices, $\mathcal{O}(k)$ denote the Lie group of $k$ by $k$ orthogonal matrices, and $\mathcal{SO}(k)$ denotes the special orthogonal matrices (the matrices whose columns consist of positively-oriented orthonormal bases for $\mathbb{R}^k$). If $W$ is a subspace of $\mathbb{R}^k$, we may use $\mathcal{SO}(W)$ to denote the restriction of $\mathcal{SO}(k)$ to $W$, and we note that there is a canonical embedding $\mathcal{SO}(W)\hookrightarrow\mathcal{SO}(k)$.

For any integers $m$ and $n$ with $m\leq n$ we let $[m,n]$ denote the integers $\{m,m+1,\ldots, n\}$. We also abide by the convention that $[m,n]=\emptyset$ if $n<m$. Further assuming that $m\geq 1$ we let $[m]$ denote the first $m$ nonzero positive integers and we let $[m]_0$ denote $[m]\cup\{0\}$. 

\subsection{Frame Theory}

We let
\[
\mathcal{F}_{N,d}^\mathbb{F}
\]
denote the space of finite unit norm tight frames (FUNTFs) consisting of $N$ vectors in $\mathbb{F}^d$. Given an $F\in\mathcal{F}_{N,d}^\mathbb{F}$, we shall interchangeably identify $F$ with the indexed collection $\{f_n\}_{n\in[N]}$ and the $d\times N$ matrix of columns $\begin{pmatrix} f_1 & f_2 &\cdots& f_N\end{pmatrix}$. With a slight abuse of notation, if $F=\{f_n\}_{n=1}^{k_1}$ and $G=\{g_n\}_{n=1}^{k_2}$ are two ordered collections of vectors, we identify $F\cup G$ with the matrix $\begin{pmatrix} f_1 &\cdots& f_{k_1} & g_1 &\cdots & g_{k_1}\end{pmatrix}$. Exploiting this identification, the matrix $FF^\ast$ is called the \emph{frame operator} of $F$. 

Eigensteps \cite{CFMPS12} are the key tool used to derive our central technical lemmas. We shall let $\Lambda_{N,d}$ denote the space of FUNTF \emph{eigensteps}, or sequences $\lambda=\{\lambda_{n;i}\}_{i\in[d],n\in[N]_0}$ of nonincreasing sequences $\lambda_n$ satisfying
\begin{itemize}
\item[(i)] $\lambda_{0;i} = 0$ for all $i\in[d]$
\item[(ii)] $\lambda_{N;i}=N/d$ for all $i\in[d]$
\item[(iii)] $\lambda_n \sqsubseteq \lambda_{n+1}$ for all $n\in[N-1]$
\item[(iv)] $1+\sum_{i}\lambda_{n;i}=\sum_i\lambda_{n+1;i}$ for all $n\in[N-1]$
\end{itemize}
where $a\sqsubseteq b$ means that the interlacing inequalities 
\[
a_d\leq b_d\text{ and } b_{i+1}\leq a_i\leq b_i \text{ for }i\in[d-1]
\]
all hold. 

Note that $\Lambda_{N,d}$ is a convex polytope, and hence it is path-connected. We let $\operatorname{int}(\Lambda_{N,d})$ denote the relative interior of this polytope, which is characterized by Proposition \ref{strict-interlace}. If $\lambda$ satisfied the conditions of Proposition \ref{strict-interlace}, we say that it satisfies \emph{strict interlacing}. We also use  $\partial \Lambda_{N,d}=\Lambda_{N,d}\setminus\operatorname{int}(\Lambda_{N,d})$ to denote the relative boundary of $\Lambda_{N,d}$.

\begin{proposition}\label{strict-interlace}
If $\lambda\in\Lambda_{N,d}$, then $\lambda \in\operatorname{int}(\Lambda_{N,d})$ if and only if
\begin{align}
\lambda_{n;i}<\lambda_{n+1;i} \text{ for } n\in [i-1,N-d+i-1]\label{strict-upper}
\end{align}
holds for all $i\in[d]$ and
\begin{align}
\lambda_{n+1;i+1}<\lambda_{n;i} \text{ for } n\in [i,N-d+i-1]\label{strict-lower}
\end{align}
holds for all $i\in[d-1]$.
\end{proposition}
\begin{proof}
In Theorem 3.2 of \cite{HP15}, a point $\widehat{\lambda}\in \Lambda_{N,d}$ is constructed to satisfy these strict inequalities, and these inequalities constitute the maximal list of inequalities which may hold without equality.\end{proof}

For a given frame $F\in\mathcal{F}_{N,d}^\mathbb{F}$, we shall let $\Lambda(F)$ denote the eigensteps associated with $F$. That is, $\{{\bf 0}\}\cup\{\{\lambda_{n;i}(F)\}_{i\in[d]}\}_{n\in[N]}$ is the set of eigenvalues (counting multiplicity and nonincreasing in the index $i$) of the operator 
\[
\sum_{n=1}^k f_n f_n^\ast,
\]
which is referred to as the $k$th partial frame operator of $F$. Note that $\Lambda:\mathcal{F}^{\mathbb{F}}_{N,d}\rightarrow\Lambda_{N,d}$ is a well defined mapping, but it is not injective so there can be many frames that have the same eigensteps. We shall sometime refer to $\Lambda$ as the eigensteps map.

A frame $F=\{f_n\}_{n\in[N]}$ is said to be \emph{orthodecomposable} (OD, pronounced ``odd'') if there is a nontrivial disjoint partition of $[N]$ into $S$ and $T$ such that 
\[
\text{span}\{f_n\}_{n\in S} = \text{span}^\perp\{f_n\}_{n\in T}.
\]
As we shall soon see, the set of OD frames contain the singular points of the FUNTF varieties, and the nonorthodecomposable (NOD, pronounced ``nod") frames are contained in the set of nonsingular points of the FUNTF varieties.

The \emph{correlation network} was introduced in \cite{S2007} to provide a simple characterization of OD frames, and certain arguments dramatically simplify when they are expressed in terms of the correlation networks. The correlation network (now sometimes known as the frame graph) of a frame $\gamma(F)=(V,E)$ is the undirected graph with vertex set $V=[N]$ such that $(i,j)\in E$ if and only if $\langle f_i,f_j\rangle\not = 0$.

\begin{proposition}[Lemma~3.2.5 in~\cite{S2007}]\label{corrnet}
A frame $F$ is NOD if and only if $\gamma(F)$ is connected.
\end{proposition}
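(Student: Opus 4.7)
The plan is to prove the contrapositive: $F$ is OD if and only if $\gamma(F)$ is disconnected, which is the content of Proposition~\ref{corrnet}.

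For the forward direction, suppose $F$ is OD with witnessing partition $S \sqcup T = [N]$, so that $\text{span}\{f_n\}_{n\in S} = \text{span}^\perp\{f_n\}_{n\in T}$. Then for any $i \in S$ the vector $f_i$ lies in $\text{span}\{f_n\}_{n \in S}$, which by hypothesis equals the orthogonal complement of $\text{span}\{f_n\}_{n \in T}$. Hence $\langle f_i, f_j \rangle = 0$ for every $i \in S$ and $j \in T$, meaning no edge of $\gamma(F)$ joins $S$ to $T$. Since both $S$ and $T$ are nonempty, $\gamma(F)$ is disconnected.

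For the reverse direction, suppose $\gamma(F) = ([N], E)$ is disconnected. Let $S$ be the vertex set of one connected component and $T = [N] \setminus S$, so both are nonempty. By construction, $\langle f_i, f_j\rangle = 0$ for all $(i,j) \in S \times T$, hence $\text{span}\{f_n\}_{n \in S}$ is contained in $\text{span}^\perp\{f_n\}_{n \in T}$. To upgrade this inclusion to equality, I would invoke the frame property: since $F \in \mathcal{F}_{N,d}^{\mathbb{F}}$ spans $\mathbb{F}^d$, we have
\[
\mathbb{F}^d = \text{span}\{f_n\}_{n\in S} + \text{span}\{f_n\}_{n\in T},
\]
and the orthogonality established above forces this sum to be an orthogonal direct sum. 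Taking orthogonal complements yields $\text{span}^\perp\{f_n\}_{n\in T} = \text{span}\{f_n\}_{n\in S}$, so $F$ is OD with partition $S \sqcup T$.

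Neither direction presents a significant obstacle; the content of the argument is essentially a dictionary translation between ``no cross-edges'' in $\gamma(F)$ and ``orthogonal span decomposition'' of $F$. The one point that must not be glossed over is the jump from the inclusion $\text{span}\{f_n\}_{n\in S} \subseteq \text{span}^\perp\{f_n\}_{n\in T}$ to equality in the reverse direction; this is where the (tight) frame hypothesis enters, via the fact that $\{f_n\}_{n\in[N]}$ spans $\mathbb{F}^d$. Without that, a disconnected correlation network would only yield an orthogonal inclusion rather than the OD condition itself.
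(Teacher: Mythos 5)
Your proof is correct and is the natural argument; the paper itself does not prove Proposition~\ref{corrnet} but simply cites it from~\cite{S2007}, so there is no in-paper proof to diverge from. You are also right to flag the one nontrivial step: passing from $\text{span}\{f_n\}_{n\in S} \subseteq \text{span}^\perp\{f_n\}_{n\in T}$ to equality requires that $F$ spans $\mathbb{F}^d$, and your orthogonal-direct-sum argument closes that gap cleanly.
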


Given a frame $F\in\mathcal{F}^{\mathbb{F}}_{N,d}$, we define the \textit{spark} of $F$ (written $\mathrm{spark}(F)$) to be the size of the smallest linearly dependent subset of $F$.  Note that $\mathrm{spark}(F)\leq d+1$; if $\mathrm{spark}(F)=d+1$ then we say that $F$ is \textit{full spark}.  Observe that a frame is full spark if and only if the following statement holds: If $k<d$ and $W\subseteq\mathbb{F}^d$ is a subspace of dimension $k$, then $|W\cap F|\leq k$. Therefore it follows that any orthodecomposable frame with $N>d$ is not full spark.

Our final essential ingredient is the Naimark complement of a Parseval frame \cite{HL00}. A frame $F$ for $\mathbb{F}^d$ is said to be a \emph{Parseval frame} if $FF^*=I_d$ (which is equivalent to $A=B=1$ for the optimal frame bounds $A$ and $B$ -- see the first chapter of \cite{CK13}). This is equivalent to $F^*F$ being an orthogonal projection. A frame $G$ for $\mathbb{F}^{N-d}$ is called a \emph{Naimark complement} of a Parseval frame $F$ if $F^*F+G^*G=I_N$ . Naimark complements preserve many important properties of frames.

\begin{proposition}\label{naimark}
Suppose $F$ and $G$ are Naimark complementary Parseval frames. Then
\begin{itemize}
\item[(i)] $F$ is equal norm if and only if $G$ is equal norm,
\item[(ii)] $F$ is OD if and only if $G$ is OD, and
\item[(iii)] $F$ is full spark if and only if $G$ is full spark.
\end{itemize}
\end{proposition}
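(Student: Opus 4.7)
The plan is to extract all three parts from a single structural observation. The hypothesis $F^*F + G^*G = I_N$ says that the stacked $N \times N$ matrix
\[
U := \begin{pmatrix} F \\ G \end{pmatrix}
\]
is unitary, since $U^*U = I_N$ forces $UU^* = I_N$ as well. In particular, this also yields the cross-block relation $FG^* = 0$, and shows that $F$ is likewise a Naimark complement of $G$, so each ``if and only if'' in the statement reduces by symmetry to a single implication.

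For (i), I would simply compare diagonals in $F^*F + G^*G = I_N$ to get $\|f_n\|^2 + \|g_n\|^2 = 1$ for every $n \in [N]$, from which $F$ has constant column norm iff $G$ does.

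For (ii), suppose $F$ is OD via a nontrivial partition $[N] = S \sqcup T$. The condition $\langle f_i, f_j \rangle = 0$ for all $i \in S$, $j \in T$ is precisely the vanishing of the $(S,T)$ off-diagonal block of $F^*F$. Since the corresponding block of $I_N$ is zero, the $(S,T)$ block of $G^*G$ vanishes as well, giving $\mathrm{span}\{g_n\}_{n\in S} \perp \mathrm{span}\{g_n\}_{n \in T}$ in $\mathbb{F}^{N-d}$. Because $G$ is a frame for $\mathbb{F}^{N-d}$, these two spans sum to the whole space, and combined with orthogonality this forces $\mathrm{span}\{g_n\}_{n\in S} = \mathrm{span}^\perp\{g_n\}_{n \in T}$, so the same partition witnesses that $G$ is OD.

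For (iii), I would invoke the classical Jacobi-type minor identity applied to the unitary matrix $U$: for any $S \subseteq [N]$ with $|S| = d$, the $d \times d$ submatrix of $F$ on columns indexed by $S$ and the $(N-d) \times (N-d)$ submatrix of $G$ on columns indexed by $S^c$ have determinants that agree in modulus (up to the unit-modulus factor $\det U$). Hence $\{f_n\}_{n \in S}$ is linearly independent iff $\{g_n\}_{n \in S^c}$ is, and since full spark for $F$ (resp.\ $G$) is the assertion that every such $d$-subfamily (resp.\ $(N-d)$-subfamily) is independent, the two conditions are equivalent. The only nontrivial ingredient is the unitary minor identity used in (iii), but this is standard (it follows from $U^{-1} = U^*$ together with the cofactor expression for $U^{-1}$), so I do not anticipate a real obstacle beyond careful bookkeeping.
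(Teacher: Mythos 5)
Your proof is correct and takes essentially the same approach as the paper: (i) is identical (comparing diagonals of $F^*F+G^*G=I_N$); (ii) rests on the same observation the paper uses — the off-diagonal entries of $F^*F$ and $G^*G$ vanish simultaneously — though you argue directly from the definition of OD while the paper packages this as $\gamma(F)=\gamma(G)$ and cites the correlation-network criterion; and (iii) via the complementary-minor (Jacobi) identity for the unitary $U=\begin{pmatrix}F\\G\end{pmatrix}$ is precisely the argument behind the result in \cite{ACM12} that the paper cites. No gaps.
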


Here, (i) follows from considering the diagonal entries of $F^*F$ and $G^*G$, (ii) follows from Proposition~\ref{corrnet} since $\gamma(F)=\gamma(G)$, while (iii) is far less obvious (see Theorem~4(iii) in~\cite{ACM12} for the essential ingredients of the proof). Noting that a FUNTF is a scalar multiple of a Parseval frame, a series of rescalings allows us to extend the notion of Naimark complements to FUNTFs. The following is an important consequence of the Naimark complement:

\begin{proposition}[Corollary 7.3 in \cite{DS03}]\label{equivconnect}
When $N> d$, $\mathcal{F}_{N,d}^\mathbb{F}$ is connected if and only if $\mathcal{F}_{N,N-d}^\mathbb{F}$ is connected.
\end{proposition}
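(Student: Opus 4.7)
The strategy is to leverage the Naimark complement (Proposition~\ref{naimark}) together with a path-lifting argument. By symmetry it suffices to assume $\mathcal{F}^{\mathbb{F}}_{N,d}$ is path-connected and prove the same for $\mathcal{F}^{\mathbb{F}}_{N,N-d}$. Given $G_1,G_2 \in \mathcal{F}^{\mathbb{F}}_{N,N-d}$, observe that the matrices $P_i := I_N - \frac{N-d}{N}\,G_i^*G_i$ are rank-$d$ orthogonal projections in $\mathbb{F}^N$, each the Gramian of a Parseval frame for $\mathbb{F}^d$ whose rescaling by $\sqrt{N/d}$ yields a FUNTF $F_i \in \mathcal{F}^{\mathbb{F}}_{N,d}$ by Proposition~\ref{naimark}(i). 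By hypothesis, join $F_1$ and $F_2$ by a continuous path $F(t)$ in $\mathcal{F}^{\mathbb{F}}_{N,d}$, inducing the continuous path $Q(t) := I_N - (d/N)F(t)^*F(t)$ of rank-$(N-d)$ projections in $\mathbb{F}^N$.

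The key technical step is a homotopy-lifting claim: the map $A \mapsto A^*A$ from the Stiefel manifold $\{A \in \mathcal{M}_{(N-d)\times N}(\mathbb{F}) : AA^* = I_{N-d}\}$ onto the Grassmannian of rank-$(N-d)$ projections in $\mathbb{F}^N$ is a locally trivial principal $\mathrm{U}(N-d)$-bundle (respectively $\mathrm{O}(N-d)$-bundle), and so enjoys the path-lifting property. Lifting $Q(t)$ starting at $\tilde G_1 := \sqrt{(N-d)/N}\,G_1$ produces a continuous path of Parseval frames, and rescaling by $\sqrt{N/(N-d)}$ gives a path in $\mathcal{F}^{\mathbb{F}}_{N,N-d}$ from $G_1$ to some $G'$ that is a Naimark complement of $F_2$. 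Because $G_2$ is also a Naimark complement of $F_2$, we have $G_2 = UG'$ for some $U \in \mathrm{U}(N-d)$ (resp.\ $\mathrm{O}(N-d)$).

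In the complex case $\mathrm{U}(N-d)$ is path-connected, so any continuous path $U(t)$ from the identity to $U$ produces $U(t)G'$, a path in $\mathcal{F}^{\mathbb{F}}_{N,N-d}$ from $G'$ to $G_2$, completing the argument. The main obstacle is the real case when $\det U = -1$: there one must realize an orientation-reversing monodromy in the principal $\mathrm{O}(N-d)$-bundle, which requires constructing a loop at $F_2$ in $\mathcal{F}^{\mathbb{R}}_{N,d}$ whose lift terminates in the non-identity component of the fiber. Using $N > d$ and the assumed path-connectedness of $\mathcal{F}^{\mathbb{R}}_{N,d}$, such a loop can be produced by continuously deforming a pair of frame vectors through a reflection; concatenating it with the previously constructed path yields the desired path from $G_1$ to $G_2$.
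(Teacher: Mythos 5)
The paper itself offers no proof of this proposition---it simply cites Corollary~7.3 of~\cite{DS03}---so there is no ``paper proof'' to compare against, and your blind attempt must stand on its own. Your overall strategy of passing to the Grassmannian $\mathcal{G}^{\mathbb{F}}_{N,d}\cong\mathcal{G}^{\mathbb{F}}_{N,N-d}$ via $P\mapsto I_N-P$ and lifting through the Stiefel-to-Grassmannian principal bundle is natural and correct in outline, and the complex case goes through cleanly.

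The real case, however, contains a genuine error at the crux of the argument. You propose constructing a \emph{loop} $\ell$ based at $F_2$ in $\mathcal{F}^{\mathbb{R}}_{N,d}$ whose projection to $\mathcal{G}$ has nontrivial $\mathrm{O}(N-d)$-monodromy. This cannot happen. The tautological $d$-plane bundle $\tau$ over the real Grassmannian and its orthogonal complement $\tau^\perp$ satisfy $w_1(\tau)=w_1(\tau^\perp)$, since $\tau\oplus\tau^\perp$ is trivial and $w_1$ is $\mathbb{Z}/2$-valued; these classes pull back to equal classes on $\mathcal{G}$. Consequently the $\mathrm{O}(d)$-monodromy and the $\mathrm{O}(N-d)$-monodromy of any loop in $\mathcal{G}$ coincide. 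A closed loop $\ell$ in $\mathcal{F}^{\mathbb{R}}_{N,d}$ is, in particular, a closed lift of its own projection, which certifies that $\pi(\ell)$ has trivial $\mathrm{O}(d)$-monodromy---and hence, by the $w_1$ identity, trivial $\mathrm{O}(N-d)$-monodromy as well. So lifting $\pi(\ell)$ to $\mathcal{F}^{\mathbb{R}}_{N,N-d}$ returns you to the same component of the fiber, the opposite of what you need. (Your proposed construction of the loop, ``continuously deforming a pair of frame vectors through a reflection,'' is also too vague: arbitrary frame vectors are not orthogonal, so a naive rotation of a pair does not preserve the frame operator or unit norm.) The fix is small but it is exactly where the hypothesis must enter: instead of a loop in $\mathcal{F}^{\mathbb{R}}_{N,d}$, take a \emph{path} from $F_2$ to $WF_2$ for some $W\in\mathrm{O}(d)$ with $\det W=-1$, which exists because $WF_2\in\mathcal{F}^{\mathbb{R}}_{N,d}$ and $\mathcal{F}^{\mathbb{R}}_{N,d}$ is assumed path-connected. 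Its image in $\mathcal{G}$ \emph{is} a loop, now with \emph{nontrivial} $\mathrm{O}(d)$-monodromy, hence nontrivial $\mathrm{O}(N-d)$-monodromy; lifting that loop to $\mathcal{F}^{\mathbb{R}}_{N,N-d}$ and concatenating with your earlier path and a path through the connected group $\mathrm{SO}(N-d)$ finishes the argument.
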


With Proposition~\ref{naimark}(ii), it is easy to imitate the argument for the above result to get a similar result for NOD frames.

\begin{proposition}\label{nodequivconnect}
When $N> d$, the set of NOD frames in $\mathcal{F}_{N,d}^\mathbb{F}$ is connected if and only if the set of NOD frames in $\mathcal{F}_{N,N-d}^\mathbb{F}$ is connected.
\end{proposition}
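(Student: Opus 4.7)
My plan is to imitate the proof of Proposition~\ref{equivconnect} by producing a continuous Naimark-complementation correspondence along paths and invoking Proposition~\ref{naimark}(ii) to confirm that the correspondence preserves the NOD property. Throughout, I will pass between FUNTFs and equal-norm Parseval frames by the scaling $F\leftrightarrow\sqrt{d/N}\,F$, which is a homeomorphism and does not affect which inner products $\langle f_i,f_j\rangle$ vanish, hence preserves orthodecomposability.

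The main construction is as follows. Given any continuous path $t\mapsto F(t)$ of FUNTFs in $\mathcal{F}_{N,d}^\mathbb{F}$, the corresponding path of rescaled Parseval frames $\tilde F(t)=\sqrt{d/N}\,F(t)$ gives a continuous path of rank-$d$ orthogonal projections $P(t)=\tilde F(t)^*\tilde F(t)$ in $\mathbb{F}^N$, and hence a continuous path $I_N-P(t)$ of rank-$(N-d)$ orthogonal projections. Because the Stiefel manifold of orthonormal $(N-d)$-frames in $\mathbb{F}^N$ is a principal $\mathrm{O}(N-d)$-bundle (or $\mathrm{U}(N-d)$-bundle in the complex case) over the Grassmannian of $(N-d)$-planes, the path of projections $I_N-P(t)$ lifts to a continuous path $\tilde G(t)$ of $(N-d)\times N$ matrices satisfying $\tilde G(t)\tilde G(t)^*=I_{N-d}$ and $\tilde G(t)^*\tilde G(t)=I_N-P(t)$. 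Reading off the diagonal of this last identity shows $\|\tilde g_n(t)\|^2=1-d/N=(N-d)/N$ for every $n$, so $G(t)=\sqrt{N/(N-d)}\,\tilde G(t)$ is a continuous path in $\mathcal{F}_{N,N-d}^\mathbb{F}$ whose members are Naimark-complementary to those of $F(t)$.

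By Proposition~\ref{naimark}(ii), $F(t)$ is NOD if and only if $G(t)$ is NOD, so the path above restricts to a path between NOD subsets whenever $F(t)$ itself lies entirely in the NOD locus. Since the roles of $d$ and $N-d$ are symmetric under Naimark complementation, the same lifting procedure applied starting in $\mathcal{F}_{N,N-d}^\mathbb{F}$ produces a path back in $\mathcal{F}_{N,d}^\mathbb{F}$. It follows that the NOD locus of $\mathcal{F}_{N,d}^\mathbb{F}$ is path-connected if and only if the NOD locus of $\mathcal{F}_{N,N-d}^\mathbb{F}$ is, which is the claim.

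The only nontrivial technical point is the continuous lifting in the second paragraph; the rest is bookkeeping about scales and an application of Proposition~\ref{naimark}(ii). I expect the lifting to be the main obstacle, but it reduces to the standard fact that the Stiefel-over-Grassmannian projection is a locally trivial fiber bundle, so paths lift (non-uniquely, but continuously), and any one such lift suffices for the connectivity argument.
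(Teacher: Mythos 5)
Your construction of the lifted Naimark-complement path is correct as far as it goes, but there is a genuine gap at the endpoint. After lifting the projection path $I_N-P(t)$ to a Stiefel path $\tilde G(t)$ with prescribed initial condition $\tilde G(0)$ corresponding to $G_1$, the terminal value $\tilde G(1)$ is only \emph{some} Naimark complement of $F_2$ — that is, $G(1)=UG_2$ for some $U$ in $\mathrm{O}(N-d)$ (or $\mathrm{U}(N-d)$), and there is no reason for $U$ to be the identity. You never address how to join $UG_2$ to $G_2$ within the NOD locus. In the complex case this is harmless because $\mathrm{U}(N-d)$ is connected and the fiber motion $V(t)G_2$ preserves the Gram matrix, hence preserves the NOD property; but even that step deserves a sentence. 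In the real case the gap is substantive: when $\det U=-1$, the two points $G_2$ and $UG_2$ sit in different components of the $\mathrm{O}(N-d)$ fiber over the projection $G_2^*G_2$, and showing them connected in the NOD locus of $\mathcal{F}_{N,N-d}^\mathbb{R}$ is essentially the thing the proposition is asserting — invoking it at this point would be circular. Indeed the two spaces can have different numbers of connected components (for $N>d$ one has $|\pi_0(\mathcal{F}_{N,1}^\mathbb{R})|=2^N$ while $|\pi_0(\mathcal{F}_{N,N-1}^\mathbb{R})|=2^{N-1}$), which makes clear that the Naimark correspondence does not induce a bijection of components and the endpoint discrepancy cannot be waved away.

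The missing ingredient that rescues this approach is a relation between the two orientation monodromies. For any continuous path of Naimark-complementary Parseval pairs $(\tilde F(t),\tilde G(t))$, the stacked $N\times N$ matrix with rows built from $\tilde F(t)$ and $\tilde G(t)$ is orthogonal, so its determinant is a continuous $\pm1$-valued function of $t$, hence constant. Consequently, if $\tilde F(1)=U\tilde F(0)$ and $\tilde G(1)=V\tilde G(0)$, then $\det U\cdot\det V=1$, i.e.\ $\det U=\det V$. With this, connectedness of the NOD locus of $\mathcal{F}_{N,d}^\mathbb{R}$ forces the existence of a loop in the NOD locus of $\mathcal{G}_{N,d}^\mathbb{R}$ with orientation-reversing $\mathrm{O}(d)$-monodromy, which by the determinant identity also has orientation-reversing $\mathrm{O}(N-d)$-monodromy, and that loop lets you carry $UG_2$ to $G_2$ through NOD frames. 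This is the content of the argument in \cite{DS03} underlying Proposition~\ref{equivconnect}, which the paper invokes and then upgrades to NOD frames via Proposition~\ref{naimark}(ii); your proposal replaces that citation by an explicit bundle-lifting argument, which is a reasonable route, but without the monodromy identity it does not close in the real case.
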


\section{Lifting paths in $\Lambda_{N,d}$ to $\mathcal{F}_{N,d}^\mathbb{F}$}

This section provides the technical lemmas involving eigensteps that we will exploit throughout the remainder of the paper. The main idea behind these lemmas is that paths in the eigensteps polytope $\Lambda_{N,d}$ can be lifted to paths of frames in $\mathcal{F}_{N,d}^\mathbb{F}$ in such a way that the eigensteps of each frame in the frame path are given by the corresponding point in the eigensteps path.

In order to construct these paths, we first show that Theorem 7 of \cite{CFMPS12} greatly simplifies for the frames $\mathcal{F}_{N,d}^\mathbb{F}$ when considering eigensteps on the relative interior of $\Lambda_{N,d}$. We now state the simplified result and present an example of the simplified structure in Figure \ref{fig:esteps-example}.

\begin{theorem}[c.f. Theorem 7 of \cite{CFMPS12}]\label{interior-eigensteps}
Suppose $\lambda\in\operatorname{int}(\Lambda_{N,d})$, set $\kappa(n)=n-N+d+1$ for $n\geq N-d$, and define the $d$ by $N-1$ matrices $v(\lambda)$ and $w(\lambda)$, and the sequence of $d$ by $d$ matrices $W_n(\lambda)$ for $n\in[N-1]$ by fixing the entries
\small
\[
v_{n;i}(\lambda) =
\left\{\begin{array}{cl}
\left[-\frac{\prod_{j\in[n+1]}(\lambda_{n;i}-\lambda_{n+1; j})}{\prod_{j\in[n+1]\setminus\{i\}} (\lambda_{n;i}-\lambda_{n;j})}\right]^{1/2} & \text{ for }n\in[d-1], i\in[n+1]\\
\left[-\frac{\prod_{j\in[d]}(\lambda_{n;i}-\lambda_{n+1; j})}{\prod_{j\in[d]\setminus\{i\}} (\lambda_{n;i}-\lambda_{n;j})}\right]^{1/2} & \text{ for }n\in[d,N-d], i\in[d]\\
\left[-\frac{\prod_{j\in[\kappa(n),d]}(\lambda_{n;i}-\lambda_{n+1; j})}{\prod_{j\in[\kappa(n),d]\setminus\{i\}} (\lambda_{n;i}-\lambda_{n;j})}\right]^{1/2} & \text{ for }n\in[N-d+1,N-1], i\in[\kappa(n),d]\\
0 & \text{ otherwise }
\end{array}\right.
\]
\[
w_{n;i}(\lambda)=\left\{\begin{array}{cl}
\left[-\frac{\prod_{j\in[n+1]}(\lambda_{n+1; i}-\lambda_{n;j})}{\prod_{j\in[n+1]\setminus\{i\}} (\lambda_{n+1;i}-\lambda_{n+1;j})}\right]^{1/2} & \text{ for }n\in[d-1], i\in[n+1]\\
\left[-\frac{\prod_{j\in[d]}(\lambda_{n+1;i}-\lambda_{n; j})}{\prod_{j\in[d]\setminus\{i\}} (\lambda_{n+1;i}-\lambda_{n+1;j})}\right]^{1/2} & \text{ for }n\in[d,N-d], i\in[d]\\
\left[-\frac{\prod_{j\in[\kappa(n),d]}(\lambda_{n+1;i}-\lambda_{n; j})}{\prod_{j\in[\kappa(n),d]\setminus\{i\}} (\lambda_{n+1;i}-\lambda_{n+1;j})}\right]^{1/2} & \text{ for }n\in[N-d+1,N-1], i\in[\kappa(n),d]\\
0 & \text{ otherwise }
\end{array}\right.
\]\normalsize
and
\small
\[
(W_n)_{j;i}(\lambda) = \left\{\begin{array}{cl}
 \frac{v_{n;i}(\lambda)w_{n;j}(\lambda)}{\lambda_{n+1;j}-\lambda_{n;i}} & \text{ when } n\in [i-1,N-d+i-1]\cap[j-1,N-d+j-1]\\
 1 &\text{ when } i=j\text{ and }n\not\in [i-1,N-d+i-1]^2\\
 0 &\text{ otherwise }
 \end{array}\right.
\]\normalsize
If $U_1$ is a $d$ by $d$ unitary matrix, and $V_n$ is a sequence of diagonal $d$ by $d$ unitary matrices for $n\in[N-1]$, then the sequence $\{f_n\}_{n=1}^N\subset\mathbb{F}^d$ defined by
\begin{enumerate}
\item $f_1=u_1$ (the first column of $U_1$)
\item $f_{n+1} = U_nV_n v_n$ and $U_{n+1}=U_n V_n W_n$ for $n\in[N-1]$
\end{enumerate}
is such that $F=\begin{pmatrix} f_1 & f_2 &\cdots & f_N\end{pmatrix}\in \mathcal{F}_{N,d}^\mathbb{F}$.

\end{theorem}

\begin{landscape}
\begin{figure}
\begin{center}
{\setlength{\tabcolsep}{12pt}
\setlength{\extrarowheight}{30pt}
\begin{tabular}{c|ccc}
$n$ & $v_n(\lambda)$ & $w_n(\lambda)$ & $W_n(\lambda)$ \\[16pt]\hline
1 & ${\setlength{\extrarowheight}{0pt} \begin{pmatrix}
\sqrt{ -\frac{(\lambda_{11}-\lambda_{12})(\lambda_{11}-\lambda_{22})}{\lambda_{11}-\lambda_{21}}}\\
\sqrt{ -\frac{(\lambda_{21}-\lambda_{12})(\lambda_{21}-\lambda_{22})}{\lambda_{21}-\lambda_{11}}}\\
0
\end{pmatrix}}$ & ${\setlength{\extrarowheight}{0pt}\begin{pmatrix}
\sqrt{\frac{(\lambda_{12}-\lambda_{11})(\lambda_{12}-\lambda_{21})}{\lambda_{12}-\lambda_{22}}}\\
\sqrt{\frac{(\lambda_{22}-\lambda_{11})(\lambda_{22}-\lambda_{21})}{\lambda_{22}-\lambda_{12}}}\\
0
\end{pmatrix}}$ & ${\setlength{\extrarowheight}{0pt}\begin{pmatrix}
\frac{v_{11}(\lambda)w_{11}(\lambda)}{\lambda_{12}-\lambda_{11}} & \frac{v_{11}(\lambda)w_{21}(\lambda)}{\lambda_{22}-\lambda_{11}} &0\\[6pt]
\frac{v_{21}(\lambda)w_{11}(\lambda)}{\lambda_{12}-\lambda_{21}} & \frac{v_{21}(\lambda)w_{21}(\lambda)}{\lambda_{22}-\lambda_{21}} &0\\[6pt]
0& 0 &1
\end{pmatrix}}$\\ 
2 & ${\setlength{\extrarowheight}{0pt}\begin{pmatrix}
\sqrt{ -\frac{(\lambda_{12}-\lambda_{13})(\lambda_{12}-\lambda_{23})(\lambda_{12}-\lambda_{33})}{(\lambda_{12}-\lambda_{22})(\lambda_{12}-\lambda_{32})}}\\
\sqrt{ -\frac{(\lambda_{22}-\lambda_{13})(\lambda_{22}-\lambda_{23})(\lambda_{22}-\lambda_{33})}{(\lambda_{22}-\lambda_{12})(\lambda_{22}-\lambda_{32})}}\\
\sqrt{ -\frac{(\lambda_{32}-\lambda_{13})(\lambda_{32}-\lambda_{23})(\lambda_{32}-\lambda_{33})}{(\lambda_{32}-\lambda_{12})(\lambda_{32}-\lambda_{22})}}
\end{pmatrix}}$ & ${\setlength{\extrarowheight}{0pt}\begin{pmatrix}
\sqrt{\frac{(\lambda_{13}-\lambda_{12})(\lambda_{13}-\lambda_{22})(\lambda_{13}-\lambda_{32})}{(\lambda_{13}-\lambda_{23})(\lambda_{13}-\lambda_{33})}}\\
\sqrt{\frac{(\lambda_{23}-\lambda_{12})(\lambda_{23}-\lambda_{22})(\lambda_{23}-\lambda_{32})}{(\lambda_{23}-\lambda_{13})(\lambda_{23}-\lambda_{33})}}\\
\sqrt{\frac{(\lambda_{33}-\lambda_{12})(\lambda_{33}-\lambda_{22})(\lambda_{33}-\lambda_{32})}{(\lambda_{33}-\lambda_{13})(\lambda_{33}-\lambda_{23})}}
\end{pmatrix}}$ & ${\setlength{\extrarowheight}{0pt}\begin{pmatrix}
\frac{v_{12}(\lambda)w_{12}(\lambda)}{\lambda_{13}-\lambda_{12}} & \frac{v_{12}(\lambda)w_{22}(\lambda)}{\lambda_{23}-\lambda_{12}} &\frac{v_{12}(\lambda)w_{32}(\lambda)}{\lambda_{33}-\lambda_{12}}\\[6pt]
\frac{v_{22}(\lambda)w_{12}(\lambda)}{\lambda_{13}-\lambda_{22}} & \frac{v_{22}(\lambda)w_{22}(\lambda)}{\lambda_{23}-\lambda_{22}} &\frac{v_{22}(\lambda)w_{32}(\lambda)}{\lambda_{33}-\lambda_{22}}\\[6pt]
\frac{v_{32}(\lambda)w_{12}(\lambda)}{\lambda_{13}-\lambda_{32}} & \frac{v_{32}(\lambda)w_{22}(\lambda)}{\lambda_{23}-\lambda_{32}} &\frac{v_{32}(\lambda)w_{32}(\lambda)}{\lambda_{33}-\lambda_{32}}
\end{pmatrix}}$ \\
3 & ${\setlength{\extrarowheight}{0pt}\begin{pmatrix}
\sqrt{ -\frac{(\lambda_{13}-\lambda_{14})(\lambda_{13}-\lambda_{24})(\lambda_{13}-\lambda_{34})}{(\lambda_{13}-\lambda_{23})(\lambda_{13}-\lambda_{33})}}\\
\sqrt{ -\frac{(\lambda_{23}-\lambda_{14})(\lambda_{23}-\lambda_{24})(\lambda_{23}-\lambda_{34})}{(\lambda_{23}-\lambda_{13})(\lambda_{23}-\lambda_{33})}}\\
\sqrt{ -\frac{(\lambda_{33}-\lambda_{14})(\lambda_{33}-\lambda_{24})(\lambda_{33}-\lambda_{34})}{(\lambda_{33}-\lambda_{13})(\lambda_{33}-\lambda_{23})}}\\
\end{pmatrix}}$ & ${\setlength{\extrarowheight}{0pt}\begin{pmatrix}
\sqrt{\frac{(\lambda_{14}-\lambda_{13})(\lambda_{14}-\lambda_{23})(\lambda_{14}-\lambda_{33})}{(\lambda_{14}-\lambda_{24})(\lambda_{14}-\lambda_{34})}}\\
\sqrt{\frac{(\lambda_{24}-\lambda_{13})(\lambda_{24}-\lambda_{23})(\lambda_{24}-\lambda_{33})}{(\lambda_{24}-\lambda_{14})(\lambda_{24}-\lambda_{34})}}\\
\sqrt{\frac{(\lambda_{34}-\lambda_{13})(\lambda_{34}-\lambda_{23})(\lambda_{34}-\lambda_{33})}{(\lambda_{34}-\lambda_{14})(\lambda_{34}-\lambda_{24})}}\\
\end{pmatrix}}$ & ${\setlength{\extrarowheight}{0pt}\begin{pmatrix}
\frac{v_{13}(\lambda)w_{13}(\lambda)}{\lambda_{14}-\lambda_{13}} & \frac{v_{13}(\lambda)w_{23}(\lambda)}{\lambda_{24}-\lambda_{13}} &\frac{v_{13}(\lambda)w_{33}(\lambda)}{\lambda_{34}-\lambda_{13}}\\[6pt]
\frac{v_{23}(\lambda)w_{13}(\lambda)}{\lambda_{14}-\lambda_{23}} & \frac{v_{23}(\lambda)w_{23}(\lambda)}{\lambda_{24}-\lambda_{23}} &\frac{v_{23}(\lambda)w_{33}(\lambda)}{\lambda_{34}-\lambda_{23}}\\[6pt]
\frac{v_{33}(\lambda)w_{13}(\lambda)}{\lambda_{14}-\lambda_{33}} & \frac{v_{33}(\lambda)w_{23}(\lambda)}{\lambda_{24}-\lambda_{33}} &\frac{v_{33}(\lambda)w_{33}(\lambda)}{\lambda_{34}-\lambda_{33}}
\end{pmatrix}}$\\
4 & ${\setlength{\extrarowheight}{0pt}\begin{pmatrix}
0\\
\sqrt{ -\frac{(\lambda_{24}-\lambda_{25})(\lambda_{24}-\lambda_{35})}{\lambda_{24}-\lambda_{34}}}\\
\sqrt{ -\frac{(\lambda_{34}-\lambda_{25})(\lambda_{34}-\lambda_{35})}{\lambda_{34}-\lambda_{24}}}
\end{pmatrix}}$ & ${\setlength{\extrarowheight}{0pt}\begin{pmatrix}
0\\
\sqrt{\frac{(\lambda_{25}-\lambda_{24})(\lambda_{25}-\lambda_{34})}{\lambda_{25}-\lambda_{35}}}\\
\sqrt{\frac{(\lambda_{35}-\lambda_{24})(\lambda_{35}-\lambda_{34})}{\lambda_{35}-\lambda_{25}}}
\end{pmatrix}}$ & ${\setlength{\extrarowheight}{0pt}\begin{pmatrix}
1 & 0 &0\\[6pt]
0&\frac{v_{24}(\lambda)w_{24}(\lambda)}{\lambda_{25}-\lambda_{24}} & \frac{v_{24}(\lambda)w_{34}(\lambda)}{\lambda_{35}-\lambda_{24}} \\[6pt]
0&\frac{v_{34}(\lambda)w_{24}(\lambda)}{\lambda_{25}-\lambda_{34}} & \frac{v_{34}(\lambda)w_{34}(\lambda)}{\lambda_{35}-\lambda_{34}}
\end{pmatrix}}$\\
5& ${\setlength{\extrarowheight}{0pt}\begin{pmatrix}
0\\
0\\
1
\end{pmatrix}}$ & ${\setlength{\extrarowheight}{0pt}\begin{pmatrix}
0\\
0\\
1
\end{pmatrix}}$ & ${\setlength{\extrarowheight}{0pt}\begin{pmatrix}
1 & 0 & 0\\
0 & 1 & 0\\
0 & 0 & 1
\end{pmatrix}}$
\end{tabular} }
\end{center}
\caption{The $v_n$, $w_n$, and $W_n$ of Theorem \ref{interior-eigensteps} when $N=6$ and $d=3$.}
\label{fig:esteps-example}
\end{figure}
\end{landscape}

\begin{proof}
Using the notation from Theorem 7 of \cite{CFMPS12}, the index sets $\mathcal{I}_n\subset[d]$ and $\mathcal{J}_n\subset[d]$ are defined as follows:
\begin{itemize}
\item $\mathcal{I}_n\subseteq[d]$ consists of the indices $i$ satisfying the inclusion critera:
\begin{enumerate}
\item $\lambda_{n;i} < \lambda_{n;j}$ for all $j<i$;
\item and the multiplicity of $\lambda_{n;i}$ as a value in $\{\lambda_{n;j}\}_{j=1}^d$ exceeds its multiplicity as a value in $\{\lambda_{n+1;j}\}_{j=1}^d$.
\end{enumerate}
\item $\mathcal{J}_n\subseteq[d]$ consists of the indices $i$ satisfying the inclusion critera:
\begin{enumerate}
\item $\lambda_{n+1;i} < \lambda_{n+1;j}$ for all $j<i$;
\item and  the multiplicity of $\lambda_{n+1;i}$ as a value in $\{\lambda_{n+1;j}\}_{j=1}^d$ exceeds its multiplicity as a value in $\{\lambda_{n;j}\}_{j=1}^d$.
\end{enumerate}
\end{itemize}
The following remarks should help demystify the role of $\mathcal{I}_n$ and $\mathcal{J}_n$, and how they are expected to simplify on $\operatorname{int}(\Lambda_{N,d})$.
\begin{enumerate}
\item The sets $\mathcal{I}_n$ are attempting to identify the eigenspaces of the partial frame operators $\sum_{i=1}^n f_i f_i^\ast$ that are ``shrinking" as $n$ increases. This ``shrinking" is effectively tracked by considering if the multiplicity of a particular eigenvalue is smaller at the next step, where eigenvalues not in the next spectrum are assigned multiplicity 0.
\item The sets $\mathcal{J}_n$ are attempting to identify the eigenspaces of the partial frame operators that are ``growing" as $n$ increases. This ``growth" is effectively tracked by considering if the multiplicity of a particular eigenvalue is larger than it was at the last step, where eigenvectors not in the previous spectrum are assigned multiplicity 0.
\item On $\operatorname{int}(\Lambda_{N,d})$, the strict interlacing inequalities essentially imply that successive spectra of the partial frame operators will be disjoint if we exclude the eigenvalues 0 and $\frac{N}{d}$. Thus, $\mathcal{I}_n=\mathcal{J}_n=[d]$ except for when $n<d$ (when the eigenspace corresponding to the eigenvalue $0$ is shrinking as $n$ increases) or $n>N-d$ (when the eigenspace corresponding to the eigenvalue $\frac{N}{d}$ is growing as $n$ increases).
\end{enumerate}

We will now show that 
\[
\mathcal{I}_n=\left\{\begin{array}{cl}
\{1,\ldots,n+1\} & \text{ if } n<d\\
\{1,\ldots,d\} & \text{ if }d\leq n \leq N-d\\
\{\kappa(n),\ldots, d\} & \text{ if } N-d<n\leq N-1
\end{array}\right.
\]
and 
\[
\mathcal{J}_n=\left\{\begin{array}{cl}
\mathcal{I}_n & \text{ if } n \leq N-d\\
\{1,n-N+d+2,\ldots, d\} & \text{ if } N-d<n<N-1\\
\{1\} & \text{ if }n=N-1
\end{array}\right.
\]
if $\lambda\in \operatorname{int}(\Lambda_{N,d})$. We shall do this by demonstrating that $i\in[d]$ belongs to $\mathcal{I}_n$ if and only if $n\in[i-1,N-d+i-1]$, and $i\in[d]$ belongs to $\mathcal{J}_n$ if and only if $n\in[i-1,N-d+i-2]$ or $i=1$. To do so, we make extensive use of the strict interlacing inequalities of Proposition \ref{strict-interlace}.

We observe that $\lambda_{n;1}<\lambda_{n;j}$ and $\lambda_{n+1;1}<\lambda_{n+1;j}$ holds vacuously for all $j<1$. Now assuming that $i>1$, we have that 
\[
\lambda_{n;i}\leq \lambda_{n-1;i-1}<\lambda_{n;i-1}\text{ for all } n-1\in [i-2,N-d+i-2]\text{ with }n-1\geq 0
\]
and
\[
\lambda_{n+1;i}\leq \lambda_{n;i-1}<\lambda_{n+1;i-1}\text{ for all } n\in[i-2,N-d+i-2]\text{ with }n\geq 0.
\]
by (\ref{strict-upper}). Using the fact that the entries of $\lambda_n$ are ordered for each $n$, we have that the first inclusion criterion for $\mathcal{I}_n$ is satisfied for all index pairs $(n;i)$ satisfying $i=1$ or $i\in[2,d]$ and $n\in[i-1,N-d+i-1]$, and also that the first inclusion criterion for $\mathcal{J}_n$ is satisfied for all index pairs $(n;i)$ satisfying $i=1$ or $i\in[2,d]$ and $n\in[i-2,N-d+i-2]$ with $n\geq 0$. Also note that these inequalities and the fact that the entries of $\lambda_{n}$ are ordered for each $n$ imply that $\lambda_{n;d}$ has multiplicity $1$ in $\{\lambda_{n;j}\}_{j=1}^d$ for $n\geq d-1$.

We now focus on excluding indices from the $\mathcal{I}_n$. If $n<i-1$, then $\lambda_{n;i}=\lambda_{n;n-1}=0$, so the first inclusion criterion fails. If $n\in[N-d+i,N-1]$, then $\lambda_{n;i}=\frac{N}{d}$, and since $\frac{N}{d}$ has multiplicity $n-(N-d)$ in $\{\lambda_{n;j}\}_{j=1}^d$ but multiplicity $n+1-(N-d)$ in $\{\lambda_{n+1;j}\}_{j=1}^d$, the second inclusion criterion fails for these pairs of indices. Note this also excludes the cases where $i=1$ but $n\geq N-d+1$. 

The remaining step is to show that the pairs $(n;i)$ satisfying $i\in[d]$ and $n\in[i-1,N-d+i-1]$ satisfy the second inclusion criterion for $\mathcal{I}_n$. If $n=i-1$, we have that $\lambda_{n;i}=0$. Noting that $0$ has multiplicity $d-i$ in $\lambda_n$ and multiplicity $d-i-1$ in $\lambda_{n+1}$, we conclude that the second inclusion criterion is satisfied for these cases. Finally, for all $i\in[d-1]$ and $n\in[i,N-d+i-1]$, the inequalities
\[
\lambda_{n;i+1}\leq \lambda_{n+1;i+1}<\lambda_{n;i}
\]
hold by (\ref{strict-lower}). Coupling this with our note about the multiplicity of $\lambda_{n;d}$, we get that the multiplicity of $\lambda_{n;i}$ in $\lambda_n$ is $1$ for all $n\in[i,N-d+i-1]$. On the other hand, the entries of $\lambda_n$ are ordered, and so the bounds
\[
\lambda_{n+1;i+1} < \lambda_{n;i} < \lambda_{n+1;i}\text{ for all } n\in[i,N-d+i-1]
\]
when $i\in[d-1]$ and
\[
\lambda_{n;d} < \lambda_{n+1;d}\text{ for all }n\in[d,N-1]
\]
imply that the multiplicity of $\lambda_{n;i}$ in $\lambda_{n+1}$ is $0$ for all $n\in[i,N-d+i-1]$. Therefore, we conclude that the second inclusion criterion is satisfied for all of these points, and we have that $\mathcal{I}_n$ consists of all $i\in[d]$ satisfying $n\in[i-1,N-d+i-1]$.

We now perform a similar demonstration for $\mathcal{J}_n$, and we begin by excluding indices. If $i\in[2,d]$ and $n\in[N-d+i-1,N-1]$, then $n+1\in[N-d+i,N]$ and hence $\lambda_{n+1;i}=\frac{N}{d}$. But then $\lambda_{n+1;1}=\frac{N}{d}$, and the first inclusion criteria for $\mathcal{J}_n$ fails for these indices. On the other hand, if $n=i-2$ for $i\in[2,d]$, we know that $\lambda_{n+1; n+2}=0$. Since $0$ has multiplicity $d-n-1$ in $\lambda_{n+1}$ and multiplicity $d-n$ in $\lambda_{n;j}$, the second inclusion criterion for $\mathcal{J}_n$ fails for these indices. We now proceed to show that the indices $(n;i)$ satisfying $i=1$ or $i\in[2,d]$ and $n\in[i-1,N-d+i-2]$ satisfy the second inclusion criterion for $\mathcal{J}_n$.

First note that if $n\in[N-d,N-1]$, then $\lambda_{n+1;1}=\frac{N}{d}$ has multiplicity $n+1-(N-d)$ in $\lambda_{n+1}$ and multiplicity $n-(N-d)$ in $\lambda_{n}$, and therefore the second inclusion criterion for $\mathcal{J}_n$ is satisfied by these indices. Now observing that
\[
\lambda_{n+1;i+1}\leq \lambda_{n;i} < \lambda_{n+1;i}\text{ for all }n\in[i-1,N-d+i-2]
\]
when $i\in[d-1]$, the ordering of the entries of $\lambda_{n+1}$ gives us that $\lambda_{n+1;1}$ has multiplicity $1$ in $\lambda_{n+1}$, and further observing that 
\[
\lambda_{n+1;i}\leq \lambda_{n;i-1}<\lambda_{n+1;i-1}\text{ for all }n\in[i-1,N-d+i-2]
\]
when $i\in[2,d]$, we have that $\lambda_{n+1;i}$ has multiplicity $1$ in $\lambda_{n+1}$ for all $n\in[i-1,N-d+i-2]$ when $i\in[d]$. The inequalities
\[
\lambda_{n;1}<\lambda_{n+1;1} \text{ for all } n\in[1,N-d-1]
\]
and the ordering of the entries of $\lambda_{n}$ together imply that $\lambda_{n+1;1}$ has multiplicity 0 in $\lambda_{n}$ for all $n\in[0,N-2]$, and so the second inclusion criterion is satisfied. For $i\in[2,d]$, we have that
\[
\lambda_{n;i}<\lambda_{n+1;i}<\lambda_{n;i-1} \text{ for } n\in[i-1,N-d+i-2]
\]
and the ordering of the entries of $\lambda_{n}$ together imply that $\lambda_{n+1;i}$ has multiplicity 0 in $\lambda_{n}$ for these indices. Thus, the second inclusion criterion for $\mathcal{J}_n$ is satisfied for all index pairs $(n;i)$ satisfying $i\in[d]$ and $n\in[i-1,N-d+i-2]$. Aggregating all of our results together, we have that $\mathcal{J}_n$ consists of the index $1$ along with any $i\in[2,d]$ satisfying $n\in[i-1,N-d+i-2]$.

Now that we have identified $\mathcal{I}_n$ and $\mathcal{J}_n$, we recall the definitions of the permutations $\pi_{\mathcal{I}_n}$ and $\pi_{\mathcal{J}_n}$ from Theorem 7 of \cite{CFMPS12}:
\begin{itemize}
\item First, recall that a permutation $\pi$ on a set $[d]$ is said to be increasing on $A\subseteq[d]$ if $\pi(i)\leq\pi(j)$ for all $i,j\in A$ with $i\leq j$.
\item $\pi_{\mathcal{I}_n}$ is the unique permutation that is increasing on both of the sets $\mathcal{I}_n$ and $[d]\setminus\mathcal{I}_n$, and such that $\pi_{\mathcal{I}_n}(\mathcal{I}_n)=[\vert\mathcal{I}_n\vert]$.
\item Similarly, $\pi_{\mathcal{J}_n}$ is the unique permutation that is increasing on both of the sets $\mathcal{J}_n$ and $[d]\setminus\mathcal{J}_n$, and such that $\pi_{\mathcal{J}_n}(\mathcal{J}_n)=[\vert\mathcal{J}_n\vert]$.
\end{itemize}
Under our hypothesis, we note that the associated permutation matrices $\Pi_{\mathcal{I}_n}$ and $\Pi_{\mathcal{J}_n}$ defined in Theorem 7 of \cite{CFMPS12} satisfy $\Pi_{\mathcal{I}_n}=\Pi_{\mathcal{J}_n}=I_d$ when $n\leq N-d$,   
\[
\Pi_{\mathcal{I}_n} = \begin{pmatrix}
{\bf 0} & I_{N-n}\\
I_{n-N+d} & {\bf 0}
\end{pmatrix}
\]
when $N-d<n<N$,
\[ \Pi_{\mathcal{J}_n} =\begin{pmatrix}
1 & {\bf 0} & {\bf 0}\\
{\bf 0} & {\bf 0} & I_{N-n-1}\\
{\bf 0} & I_{n-N+d} & {\bf 0}
\end{pmatrix}
\]
when $N-d<n<N-1$, and $\Pi_{\mathcal{J}_{N-1}}=I_d$.

We now claim that
\[
v_n(\lambda) = \Pi_{\mathcal{I}_n}^T\begin{pmatrix}v_n\\ {\bf 0} \end{pmatrix},\: w_n(\lambda) = \Pi_{\mathcal{I}_n}^T\begin{pmatrix}w_n\\ {\bf 0}\end{pmatrix},
\]
and 
\[
W_n(\lambda) = \Pi_{\mathcal{I}_n}^T\begin{pmatrix} W_n & {\bf 0} \\
															{\bf 0} & I_k\end{pmatrix}\Pi_{\mathcal{I}_n}
\]
for all $n\in[N-1]$, where $v_n$, $w_n$, and $W_n$ defined in Theorem 7 of \cite{CFMPS12} are the counterparts the $v_n(\lambda)$, $w_n(\lambda)$, and $W_n(\lambda)$ that we have defined. 

For $n\leq d-1$, we observe that 
\begin{align*}
v_{n;i}(\lambda)&=\sqrt{-\frac{\prod_{j\in[n+1]} (\lambda_{n;i}-\lambda_{n+1;j})}{\prod_{j\in[n+1]\setminus\{i\}}(\lambda_{n;i}-\lambda_{n;j})}}=\sqrt{-\frac{\prod_{j\in\mathcal{J}_n} (\lambda_{n;i}-\lambda_{n+1;j})}{\prod_{j\in\mathcal{I}_n\setminus\{i\}}(\lambda_{n;i}-\lambda_{n;j})}}=v_{n;i}
\end{align*}
if $i\in[n+1]$, and $v_{n;i}(\lambda)=0$ for $i\in[d]$. For $n\in[d,N-d]$, the equivalence is obvious. If $n\in[N-d+1,N-1]$, then 
$v_{n;i}(\lambda)=0$ for $i\in[n-N+d]=\pi_{\mathcal{I}_n}([d]\setminus\mathcal{I}_n)$. Noting that $\lambda_{n+1;1}=\lambda_{n+1;\kappa(n)}=\frac{N}{d}$ if $n\in[N-d+1,N-1]$, we have
\begin{align*}
v_{n;i}(\lambda)&=\sqrt{-\frac{\prod_{j\in[\kappa(n),d]} (\lambda_{n;i}-\lambda_{n+1;j})}{\prod_{j\in[\kappa(n),d]\setminus\{i\}}(\lambda_{n;i}-\lambda_{n;j})}}\\
&=\sqrt{-\frac{\prod_{j\in\{1\}\cup[n-N+d+2,d]} (\lambda_{n;i}-\lambda_{n+1;j})}{\prod_{j\in[\kappa(n),d]\setminus\{i\}}(\lambda_{n;i}-\lambda_{n;j})}}\\
&=\sqrt{-\frac{\prod_{j\in\mathcal{J}_n} (\lambda_{n;i}-\lambda_{n+1;j})}{\prod_{j\in\mathcal{I}_n\setminus\{i\}}(\lambda_{n;i}-\lambda_{n;j})}}\\
&=v_{n;\pi_{\mathcal{I}_n}(i)}
\end{align*}
if $n<N-1$, and $v_{N-1;d}(\lambda)=\sqrt{-(\lambda_{N-1;d}-\lambda_{N;d})}=\sqrt{-(\lambda_{N-1;d}-\lambda_{N;1})}=v_{n,1}$. The identity for $w_n(\lambda)$ follows similarly, and the identity for $W_n(\lambda)$ follows from the identities for $v_n(\lambda)$ and $w_n(\lambda)$. 

Now, in the inductive definition of $f_n$ in Theorem 7 of \cite{CFMPS12},
\[
U_{n+1}=U_n V_n \Pi_{\mathcal{I}_n}^T\begin{pmatrix}
W_n & {\bf 0} \\
{\bf 0} & I_{k(n)}
\end{pmatrix} \Pi_{\mathcal{J}_n}, 
\]
for some function $k(n)$, and the update used for our theorem is effectively
\[
U_{n+1} = U_n V_n \Pi_{\mathcal{I}_n}^T\begin{pmatrix}
W_n & {\bf 0} \\
{\bf 0} & I_{k(n)}
\end{pmatrix} \Pi_{\mathcal{I}_n}=U_n V_n \Pi_{\mathcal{I}_n}^T\begin{pmatrix}
W_n & {\bf 0} \\
{\bf 0} & I_{k(n)}
\end{pmatrix} \Pi_{\mathcal{J}_n}\Pi_{\mathcal{J}_n}^T\Pi_{\mathcal{I}_n}.
\]
For $n\leq N-d$, these definitions are identical because $\Pi_{\mathcal{J}_n}^T\Pi_{\mathcal{I}_n}=I_d$. For $n>N-d$, the application of $\Pi_{\mathcal{J}_n}^T\Pi_{\mathcal{I}_n}$ simply permutes the basis of eigenvectors for the eigenspace of the $(n+1)$th partial frame operator corresponding to the eigenvalue $\frac{N}{d}$. Since the next vector $f_{n+2}$ must be orthogonal to this subspace, it follows that the $f_{n+2}$ of both sequences coincides. This completes the proof.\end{proof}

\begin{lemma}\label{fulllift}
Given any $F\in\mathcal{F}_{N,d}^\mathbb{F}$ such that $\mu = \Lambda(F)\in\operatorname{int}(\Lambda_{N,d})$, there is a continuous map $\theta\colon\operatorname{int}(\Lambda_{N,d})\rightarrow \mathcal{F}_{N,d}^\mathbb{F}$ such that $\theta(\Lambda(F))=F$ and $\Lambda(\theta(\lambda))=\lambda$ for all $\lambda\in\operatorname{int}(\Lambda_{N,d})$.
\end{lemma}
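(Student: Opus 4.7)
The plan is to leverage the explicit iterative construction from \cite{CFMPS12} that builds a FUNTF from its eigensteps. In that construction, one inductively has a partial frame $f_1,\ldots,f_n$ and an orthonormal eigenbasis $U_n$ of the partial frame operator $F_nF_n^*$, and writes $f_{n+1}=U_nc_{n+1}$, where the moduli $|c_{n+1,i}|$ are explicit closed-form expressions involving square roots of products and differences of the entries of $\mu_n$ and $\mu_{n+1}$. The remaining degrees of freedom are the phases (or signs) of the entries of each $c_{n+1}$ together with the unitary/orthogonal freedom in $U_n$ on any repeated eigenspace of $F_nF_n^*$. When $\mu\in\operatorname{int}(\Lambda_{N,d})$, every interlacing inequality is strict, so the quantities under the square roots are strictly positive, and the canonical positive-real-root choice makes the entire construction continuous in $\mu$ and the auxiliary data.

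Given the specific $F$ with $\lambda(F)=\lambda^*$, I would read off from $F$ the precise auxiliary data $\Delta(F)=(U_0^*,U_1^*,\ldots,U_{N-1}^*,\text{phase tuple})$ that causes the recursion at eigensteps $\lambda^*$ to reproduce $F$ on the nose. The lift $\theta(\mu)$ is then defined as the frame produced by running the same recursion at eigensteps $\mu$ using the data $\Delta(F)$, suitably transported so that it remains meaningful when the partial frame operators and their eigenspaces move with $\mu$.

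The technical heart is organizing this transport so that the fixed initial data $\Delta(F)$ determines a continuous section over all of $\operatorname{int}(\Lambda_{N,d})$. For $n\geq d$, the partial frame operator $F_nF_n^*$ has distinct nonzero eigenvalues on the interior, so its eigenbasis $U_n$ is determined up to a diagonal phase, and standard perturbation theory for self-adjoint matrices with simple spectrum provides continuous dependence on $\mu$. For $n<d$, the eigenbasis must include an orthonormal basis of $\operatorname{span}\{f_1,\ldots,f_n\}^\perp$; this null space depends continuously on $\mu$, so the initial null-space basis coming from $F$ can be parallel-transported continuously, using that $\operatorname{int}(\Lambda_{N,d})$ is open, convex, and contractible. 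The phase data is discrete in the real case or a point on a torus in the complex case, so it is simply held fixed.

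I expect the main obstacle to be precisely this continuous bookkeeping of the eigenbases at the low-index stages $n<d$, where genuine unitary/orthogonal freedom exists in $U_n$ and the construction must consume this data in a $\mu$-independent way. Once the transport is set up, $\lambda(\theta(\mu))=\mu$ is immediate from how the recursion is built, $\theta(\lambda^*)=F$ holds by construction of $\Delta(F)$, and continuity of $\theta$ reduces to continuity of each step in the iteration, already verified in the first paragraph.
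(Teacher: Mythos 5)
Your proposal follows the same strategy as the paper: both invoke the iterative construction of Theorem~7 of \cite{CFMPS12}, read off from $F$ the free auxiliary data (the initial unitary and the block-diagonal unitaries $V_n$, together with sign/phase choices), and then rerun the recursion at $\mu$ while keeping that auxiliary data fixed, relying on strict interlacing on $\operatorname{int}(\Lambda_{N,d})$ to make the square-rooted quantities positive and hence continuous. Where you diverge is in how the continuity of the eigenbasis $U_n(\mu)$ is obtained. You propose re-deriving it from scratch via spectral perturbation theory for the simple-spectrum stages $n\geq d$ and a parallel-transport/contractibility argument for the degenerate null space at stages $n<d$; this is workable in principle but considerably heavier than necessary, and you have not actually verified that your transported null-space basis is compatible with the recursion's $v_n,W_n$ updates. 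The paper sidesteps the whole issue: the Theorem-7 recursion updates $U_{n+1}(\mu)=U_n(\mu)\,V_n\,Q_n\,\bigl[\begin{smallmatrix}W_n(\mu)&0\\0&I\end{smallmatrix}\bigr]$, so once one makes the observation (which your write-up omits) that the index sets $\mathcal{I}_n,\mathcal{J}_n$ and hence the permutations $P_n,Q_n$ are \emph{constant} across $\operatorname{int}(\Lambda_{N,d})$, continuity of each $U_n(\mu)$ is immediate from continuity of $W_n(\mu)$ by induction on $n$ --- no perturbation theory or explicit transport of null-space bases is needed. I would recommend reorganizing around that observation rather than building independent bundle-theoretic machinery for what the recursion already gives you for free.
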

\begin{proof}
Applying Theorem 7 of \cite{CFMPS12} to $F$, we obtain unitary matrices $U_1$ and $\{V_n\}_{n=1}^{N-1}$ that can be used to recover the individual columns of $F$ from $\mu$ and the matrices $v(\mu)$, $w(\mu)$, and $W_n(\mu)$ from Theorem \ref{interior-eigensteps}. 

Note that $v(\lambda)$, $w(\lambda)$ and $W_n(\lambda)$ are all continuous as functions of $\lambda\in\operatorname{int}(\Lambda_{N,d})$ because their entries are all radicals of positive-valued rational functions whose denominators are never zero on the domain of interest. Hence, the vector-valued functions defined by $f_1(\lambda)=u_1$, $U_1(\lambda)=U_1$, $f_{n+1}=U_n(\lambda) V_n v_n(\lambda)$, and $U_n(\lambda)=U_n(\lambda)V_n W_n(\lambda)$ are all continuous as well. Consequently, the matrix-valued function 
\[
\theta(\lambda)=\begin{pmatrix} f_1(\lambda) & f_2(\lambda) &\cdots & f_N(\lambda)\end{pmatrix}
\]
is continuous and takes values in $\mathcal{F}_{N,d}^\mathbb{F}$. Finally, the converse part of Theorem 7 of \cite{CFMPS12} ensures that $\theta(\mu)\in \mathcal{F}_{N,d}^\mathbb{F}$ satisfies $\Lambda(\theta(\mu))=\mu$.\end{proof}

\begin{lemma}\label{lift}
Suppose $F\in\mathcal{F}_{N,d}^\mathbb{F}$ with $N\geq d+2$, assume that $\mu = \Lambda(F)$ is in the interior of $\Lambda_{N,d}$, fix $\nu\in\Lambda_{N,d}$, and define the linear path $\ell\colon[0,1]\rightarrow\Lambda_{M,d}$ by $\ell(t)=(1-t)\mu+t\nu$. Then there is a continuous lifting (or frame path), $\widetilde{\ell}\colon[0,1]\rightarrow \mathcal{F}_{N,d}^\mathbb{F}$ such that $(\lambda\circ \widetilde{\ell})(t)=\ell(t)$ for all $t\in[0,1]$.
\end{lemma}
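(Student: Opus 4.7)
The plan is to extend the continuous section $\theta$ from Lemma \ref{fulllift}, which is defined only on $\operatorname{int}(\Lambda_{N,d})$, along the full segment $\ell$, so that continuity at the endpoint $t=1$ (where $\mu$ may lie on $\partial\Lambda_{N,d}$) becomes the sole technical difficulty.

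First I would check that $\ell(t)$ remains in the interior for every $t\in[0,1)$. Since $\lambda$ is interior, any open ball $B(\lambda,r)\subseteq\Lambda_{N,d}$ gives rise to the open ball $(1-t)B(\lambda,r)+t\mu$ of radius $(1-t)r>0$ about $\ell(t)$, still contained in $\Lambda_{N,d}$ by convexity. Lemma \ref{fulllift} then produces a continuous section $\theta\colon\operatorname{int}(\Lambda_{N,d})\to\mathcal{F}_{N,d}^{\mathbb{F}}$ with $\theta(\lambda)=F$, and setting $\widetilde{\ell}(t)=\theta(\ell(t))$ for $t\in[0,1)$ yields a continuous lift on the half-open interval with $\widetilde{\ell}(0)=F$ and $\lambda\circ\widetilde{\ell}=\ell$.

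The remaining task is continuous extension at $t=1$. Compactness of $\mathcal{F}_{N,d}^{\mathbb{F}}$ as a closed bounded subset of $\mathcal{M}_{d\times N}(\mathbb{F})$ guarantees that every sequence $\widetilde{\ell}(t_k)$ with $t_k\nearrow 1$ has convergent subsequences, and by continuity of $\lambda$ every cluster point $F^*$ satisfies $\lambda(F^*)=\mu$.

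The main obstacle is showing that this cluster point is unique, so that $\widetilde{\ell}$ actually possesses a one-sided limit at $t=1$. I would address this by restricting the formulas for $v_n(\mu)$, $w_n(\mu)$, and $W_n(\mu)$ from the proof of Lemma \ref{fulllift} to the one-dimensional curve $\mu=\ell(t)$: since each coordinate $\mu_{n;i}$ is affine in $t$, the squared entries of $v_n$ and $w_n$ and the entries of $W_n$ become rational functions of the single variable $t$, and therefore possess well-defined limits in $\mathbb{R}\cup\{\pm\infty\}$ as $t\to 1^-$. The unit-norm identity $\sum_i v_{n;i}(\mu)^2=1$, the analogous identity for $w_n$, and the unitarity of $W_n$ rule out divergence, so each ingredient has a unique finite limit. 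Propagating these limits through the recursive identities defining $\theta_{n+1}(\mu)$ and $U_{n+1}(\mu)$ yields a unique limit frame $F^*\in\mathcal{F}_{N,d}^{\mathbb{F}}$ with $\lambda(F^*)=\mu$, and setting $\widetilde{\ell}(1)=F^*$ completes the continuous extension. The hypothesis $N\geq d+2$ is invoked to guarantee enough combinatorial room in the recursion to absorb the eigenstep coincidences that appear when $\mu$ lies on $\partial\Lambda_{N,d}$.
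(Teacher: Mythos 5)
Your strategy matches the paper's: lift on $[0,1)$ via Lemma~\ref{fulllift}, and then resolve the endpoint by analyzing $v_n$, $w_n$, $W_n$ as functions of the single variable $t$ along the linear path. The paper makes this resolution more explicit: it observes that every potentially vanishing coordinate difference along $\ell(t)$ factors as $(1-t)$ times a nonzero constant determined by $\lambda$ (for instance $\ell(t)_{n;i}-\ell(t)_{n+1;j}=(1-t)(\lambda_{n;i}-\lambda_{n+1;j})$ when $\mu_{n;i}=\mu_{n+1;j}$), so the singularities of $v_n$, $w_n$, $W_n$ at $t=1$ are directly seen to be removable, without a separate compactness or subsequential-limit step.

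Two small corrections. First, the unsquared entries of $W_n(\ell(t))$ are \emph{not} rational in $t$: since $v_n$ and $w_n$ are defined as square roots, $W_{n;\sigma_n(i),\tau_n(j)}$ has the form $\sqrt{\text{rational}}/\text{affine}$. The conclusion you want still holds --- these are algebraic, real-analytic functions on $[0,1)$, so they have one-sided limits in $\mathbb{R}\cup\{\pm\infty\}$, and your boundedness argument via the unitarity of $W_n$ then rules out divergence --- but the rationality claim as written is false and should be replaced by this (or by the paper's direct factorization). Second, the hypothesis $N\geq d+2$ serves only to ensure that $\operatorname{int}(\Lambda_{N,d})$ is nonempty, so that Lemma~\ref{fulllift} and the hypothesis $\lambda(F)\in\operatorname{int}(\Lambda_{N,d})$ are meaningful; it is not supplying ``combinatorial room'' to absorb eigenstep coincidences.
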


\begin{proof}
$F$ satisfies the hypothesis of Lemma \ref{fulllift}, so we may obtain a lift $\theta:\operatorname{int}(\Lambda_{N,d})\rightarrow\mathcal{F}_{N,d}^\mathbb{F}$ satisfying $\theta(\mu)=F$ and $\Lambda(\theta(\lambda))=\lambda$ for all $\lambda\in \operatorname{int}(\Lambda_{N,d})$. Thus, we may define the lift of $\ell$ by $\widetilde{\ell}=\theta\circ\ell:[0,1)\rightarrow\mathcal{F}_{N,d}^\mathbb{F}$ and verify that $\widetilde{\ell}(0)=F$. If $\nu$ is also on the interior, it is clear that the this path extends continuously to all of $[0,1]$. On the other hand, if $\nu$ lies on the boundary, then some interlacing inequalities become equalities ($\nu_{n;i}=\nu_{n+1;i}$ or $\nu_{n;i}=\nu_{n+1;i+1}$ for some $n$ and $i$) and the definitions of $v_n(\lambda)$, $w_n(\lambda)$, and $W_n(\lambda)$ in Lemma \ref{fulllift} appear to involve undefined quantities because of the terms in the denominators. First, we show that these problematic terms cancel along this path. 

We shall explicitly show that the problematic denominators cancel for the indices $n\in[d,N-d]$. When $n\in[d]$ or $n\in[N-d+1,N-1]$, the result follows from a similar argument, but one must take slightly more care with the indices under consideration. Now, fix $i\in[d]$ and note that the set of all $j$ such that $\nu_{n;i}=\nu_{n;j}$ ($\nu_{n+1;i}=\nu_{n+1;j}$ for $w(\lambda)$) forms an interval $[k_1,k_2]\subset [d]$ because these values are ordered. The interlacing conditions on $\nu$ then imply that $\nu_{n+1;k}=\nu_{n;i}$ for all $k\in[k_1,k_2-1]$. Setting
\[
h_j(t) = \left[(1-t)\mu_{n;i}+t\nu_{n;i}-(1-t)\mu_{n+1;j}-t\nu_{n+1;j}\right]
\]
and
\[
g_j(t) = \left[(1-t)\mu_{n;i}+t\nu_{n;i}-(1-t)\mu_{n;j}-t\nu_{n;j}\right]
\]
for all $j\in[d]$, note that 
\begin{itemize}
\item $h_k(t)=(1-t)(\mu_{n;i}-\mu_{n+1;k})$ for all $k\in[k_1,k_2-1]$;
\item $g_k(t)=(1-t)(\mu_{n;i}-\mu_{n;k})$ for all $k\in[k_1,k_2]$;
\item if $k\not\in [k_1,k_2]$, then $g_k(t)\not=0$ for all $t\in[0,1]$ since $\mu\in\operatorname{int}(\Lambda_{N,d})$ and $\nu_{n;i}\not=\nu_{n;k}$;
\item and $v_{n;i}(\ell(t))=\sqrt{-\prod_{j=1}^d h_j(t)/\prod_{j\not=i} g_j(t)}$ for $t\in[0,1)$.
\end{itemize}
Thus,
\begin{align*}
v_{n;i}(\ell(t)) &= \sqrt{-\frac{\prod_{j=1}^d h_j(t)}{\prod_{j\not=i} g_j(t)}}\\
&= \sqrt{-\frac{\prod_{j\in[k_1,k_2-1]} h_j(t)\prod_{j\not\in[k_1,k_2-1]} h_j(t)}{\prod_{j\in[k_1,k_2]\setminus\{i\}} g_j(t)\prod_{j\not\in[k_1,k_2]} g_j(t)}}\\
&= \sqrt{-\frac{\prod_{j\in[k_1,k_2-1]} (1-t)(\mu_{n;i}-\mu_{n+1;j})\prod_{j\not\in[k_1,k_2-1]} h_j(t)}{\prod_{j\in[k_1,k_2]\setminus\{i\}} (1-t)(\mu_{n;i}-\mu_{n;j})\prod_{j\not\in[k_1,k_2]} g_j(t)}}\\
&=\sqrt{-\frac{\prod_{j\in[k_1,k_2-1]} (\mu_{n;i}-\mu_{n+1;j})\prod_{j\not\in[k_1,k_2-1]} h_j(t)}{\prod_{j\in[k_1,k_2]\setminus\{i\}}(\mu_{n;i}-\mu_{n;j})\prod_{j\not\in[k_1,k_2]} g_j(t)}},
\end{align*}
and the denominator term $\prod_{j\in[k_1,k_2]\setminus\{i\}}(\mu_{n;i}-\mu_{n;j})\prod_{j\not\in[k_1,k_2]} g_j(t)$ is nonzero for all $t\in[0,1]$. 
Thus $v\circ\ell$ continuously extends to all of $[0,1]$. A similar argument extends $w\circ\ell$ continuously to all of $[0,1]$. 

If $\nu_{n+1;j}=\nu_{n;i}$ for some $i$ and $j$, then let the $[k_1,k_2]\subset[d]$ denote the set of all $j^\prime$ satisfying $\nu_{n;i}=\nu_{n;j^\prime}$ and let $[l_1,l_2]\subset[d]$ denote the set of all $j^\prime$ satisfying $\nu_{n+1;j}=\nu_{n+1;j^\prime}$, and note that $i\in[k_1,k_2]$ and $j\in[l_1,l_2]$. Setting
\[
b_k(t) = (1-t)\mu_{n+1;i}+t\nu_{n+1;i}-(1-t)\mu_{n;k}-t\nu_{n;k}
\]
and 
\[
a_k(t)= (1-t)\mu_{n+1;i}+t\nu_{n+1;i}-(1-t)\mu_{n+1;k}-t\nu_{n+1;k},
\]
we observe that
\begin{align*}
(W_n)_{j;i}(\ell(t)) &= \frac{v_{n;i}(\ell(t))w_{n;j}(\ell(t))}{(1-t)\mu_{n+1;j}+t\nu_{n+1;j}-(1-t)\mu_{n;i}-t\nu_{n;i}}\\
&= \frac{v_{n;i}(\ell(t))w_{n;j}(\ell(t))}{(1-t)(\mu_{n+1;j}-\mu_{n;i})}.
\end{align*}
This last numerator is
\begin{align*}
\sqrt{-\frac{\prod_{k\in[l_1,l_2]}h_k(t)\prod_{k\not\in[l_1,l_2]}h_k(t)}{\prod_{k\in[k_1,k_2]\setminus\{i\}}h_k(t)\prod_{k\not\in[k_1,k_2]}h_k(t)}}\sqrt{\frac{\prod_{k\in[k_1,k_2]}b_k(t)\prod_{k\not\in[k_1,k_2]}h_k(t)}{\prod_{k\in[l_1,l_2]\setminus\{j\}}a_k(t)\prod_{k\not\in[l_1,l_2]}a_k(t)}}.
\end{align*}
Similar to our previous considerations, we have that
\begin{itemize}
\item $h_k(t)=(1-t)(\mu_{n;i}-\mu_{n+1;k})$ for all $k\in[l_1,l_2]$ since $\nu_{n+1;k}=\nu_{n+1;j}=\nu_{n;i}$
\item $b_k(t)=(1-t)(\mu_{n+1;j}-\mu_{n;k})$ for all $k\in[k_1,k_2]$ since $\nu_{n;k}=\nu_{n;i}=\nu_{n+1;j}$
\end{itemize}
and therefore the $(1-t)$ term has an exponent of $(k_2-k_1)+(l_2-l_1)$ in the numerator under the radical, and an exponent of $(k_2-k_1)+(l_2-l_1)-2$ in the denominator under the radical. This leaves a factor of $(1-t)^2$ under the radical, which ultimately cancels with the $(1-t)$ term in the denominator of $(W_n)_{j;i}(\ell(t))$. We observe that all the denominator terms are now nonzero for all $t\in[0,1]$, so $W_n(\ell(t))$ continuously extends to all of $[0,1]$. 

Because we have continuous extensions of $v(\ell(t))$, $w(\ell(t))$, and the $W_n(\ell(t))$, our definition of $\theta\circ\ell$ extends continuously to $[0,1]$. Continuity of this extension then ensures that $\theta\circ\ell(t)\in\mathcal{F}_{N,d}^\mathbb{F}$ for all $t\in[0,1]$ since $\mathcal{F}_{N,d}$ is compact. Moreover, continuity gives us that $\Lambda(\theta(\ell(t))=\ell(t)$ for all $t\in[0,1]$. This completes our demonstration that $\widetilde{\ell}(t)$ is a continuous lifting of $\ell(t)$.\end{proof}

\section{Connectivity of $\mathcal{F}_{N,d}^\mathbb{F}$}\label{sec:connect}

We now prove the connectivity of real algebraic varieties of complex FUNTFs using the path lifting argument of the previous section.

\begin{proof}[Proof of Theorem \ref{cmplxconnect}]
The connectivity result is well known if $N=d$ or $N=d+1$, so we only consider the cases $N\geq d+2$ where the interior of $\Lambda_{N,d}$ is not empty. Let $F$ and $G$ belong to $\mathcal{F}_{N,d}^\mathbb{C}$. We consider two cases. First, suppose that $\Lambda(F)$ belongs to the interior of $\Lambda_{N,d}$. By Lemma \ref{lift}, we may continuously connect $F$ to a $G^\prime$ in $\mathcal{F}_{N,d}^\mathbb{C}$ with $\Lambda(G)=\Lambda(G^\prime)$. By Theorem 7 of \cite{CFMPS12}, we have that the only difference between $G$ and $G^\prime$ is the choice of $U_1$ and the $V_n$. However, since the unitary matrices are connected and products of connected sets are connected, there are continuous paths connecting $U_1^\prime $ to $U_1$ and each $V_n^\prime$ to $V_n$. These then induce a continuous path from $G^\prime$ to $G$. By traversing this path after the path provided by Lemma \ref{lift}, we successfully connect $F$ to $G$ by a continuous path. On the other hand, if $F$ and $G$ both belong to the boundary of $\Lambda_{N,d}$, we may choose $H$ from the interior of $\Lambda_{N,d}$ and connect $F$ to $H$ and $H$ to $G$ in the manner previously described. Traversing these paths in order produces the desired path.\end{proof}

In the complex case, the inverse image of a point of $\Lambda_{N,d}$ under the map $\lambda$ is connected because the $V_n$'s from Theorem 7 of \cite{CFMPS12} are block diagonal with unitary blocks and this set is connected (it is a product of unitary groups which are each individually connected). In the real case, this inverse image has at least two disjoint connected components. This makes the proof much more delicate, and we shall now require access to some simple continuous motions to circumvent the ostensible obstructions. Perhaps the most obvious motion that a FUNTF may undergo is the ``spinning'' of an orthogonal pair of frame elements inside of their span. The following lemma generalizes this kind of continuous motion.

\begin{lemma}\label{spinning}
Let $G=\{g_n\}_{n=1}^{N_1}\subset\mathbb{R}^d$ be a FUNTF for $W=\text{span} \{g_n\}_{n=1}^{N_1}$, and suppose $H=\{h_n\}_{n=1}^{N_2}\subset\mathbb{R}^d$ is such that $F=G\cup H\in\mathcal{F}_{N,d}^\mathbb{R}$. Let $U\in \mathcal{SO}(W)\hookrightarrow \mathcal{SO}(U)$ and set $G^\prime=\{Ug_n\}_{n=1}^{N_1}$. Then  $F^\prime=G^\prime\cup H\in\mathcal{F}_{N,d}^\mathbb{R}$, and there is a there is a continuous motion in $\mathcal{F}_{N,d}^\mathbb{R}$ connecting $F$ to $F^\prime$.
\end{lemma}

\begin{proof}
It is well known that $\mathcal{SO}(W)$ is connected, so there is continuous path $u:[0,1]\rightarrow \mathcal{SO}(W)$ such that $u(0)=I_d$ and $u(1)=U$. Defining $F(t) = \{u(t)g_n\}_{n=1}^{N_1}\cup H$, we observe that $\Vert u(t) g_n\Vert=\Vert g_n\Vert$ for all $t\in[0,1]$ and 
\begin{align*}
F(t)F(t)^\ast &= \sum_{n=1}^{N_1} u(t)g_n g_n^\ast u(t)^\ast + \sum_{n=1}^{N_2} h_n h_n^\ast\\
&= u(t)\left(\sum_{n=1}^{N_1} g_n g_n^\ast\right) u(t)^\ast + \sum_{n=1}^{N_2} h_n h_n^\ast\\
&= u(t) \left(\frac{N_1}{\text{dim}(W)} P_W \right)u(t)^\ast + \sum_{n=1}^{N_2} h_n h_n^\ast\\
&= \frac{N_1}{\text{dim}(W)} P_W+ \sum_{n=1}^{N_2} h_n h_n^\ast\\
&= \sum_{n=1}^{N_1} g_n g_n^\ast + \sum_{n=1}^{N_2} h_n h_n^\ast\\
& = FF^\ast = I_d.
\end{align*}
Therefore $F(t)\in\mathcal{F}_{N,d}^\mathbb{R}$ for all $t\in[0,1]$, and $F(t)$ is a continuous path with $F(0)=F$ and $F(1)=F^\prime$.\end{proof}

The next important motion involves ``swapping" vectors inside a FUNTF that is a union of two FUNTFs.

\begin{lemma}\label{swapping}
Let $G\in\mathcal{F}_{N_1,d}^\mathbb{R}$ and $H\in\mathcal{F}_{N_2,d}^\mathbb{R}$ and set $F=\begin{pmatrix} G & H\end{pmatrix}\in\mathcal{F}_{N,d}^\mathbb{R}$ where $N=N_1+N_2$. For any $N$ by $N$ permutation matrix $\Pi$, there is a continuous path from $F$ to $F\Pi$ in $\mathcal{F}_{N,d}^\mathbb{R}$. 
\end{lemma}

\begin{proof}
Without loss of generality, we simply show that such paths exist for permutation matrices arising from transpositions since any permutation is a product of transpositions and successive applications of transpositions may be obtained by concatenating paths.   

Suppose $\Pi$ represents the transposition of the $m$th column of $G=\begin{pmatrix}g_1&\cdots&g_{N_1}\end{pmatrix}$ with the $n$th column of $H\begin{pmatrix}h_1&\cdots&h_{N_1}\end{pmatrix}$ in $F$. Pick any $U\in \mathcal{SO}(d)$ such that $Ug_m=h_n$, note that the span of the vectors in $G$ is all of $\mathbb{R}^d$, and use Lemma \ref{spinning} to continuously connect to $\begin{pmatrix} UG & H\end{pmatrix}$. Now, the columns in $UG$ with indices in $[N_1]\setminus{m}$ form a FUNTF with the $n$th column of $H$. Applying Lemma \ref{spinning} to this FUNTF and the rotation $U^\ast$, we arrive at $F^\prime=F\Pi$.

If the pairs are from the same collection (say $G$), we first choose a third ``chaperone" vector from $H$, and then run three continuous paths swapping vectors from opposite collections. Let $g_{m_1}$, $g_{m_2}$, and $h_n$ denote the pair of vectors in $G$ and the chaperone in $H$, respectively. Furthermore, let $\Pi_1$ denote the permutation matrix which transposes $m_1$-th columns with $(N_1+n)$th columns by left multiplication, let $\Pi_2$ denote the permutation matrix which transposes the $m_2$-th columns with $(N_1+n)$th column by left multiplication, and note that $\Pi=\Pi_1\Pi_2\Pi_1$ is the permutation matrix which transposes the $m_1$-th column with $m_2$-th column by left multiplication. By the previous arguments, the application of $\Pi_1$, $\Pi_2$, and then $\Pi_1$ again may each be induced using continuous paths, and therefore concatenating these paths in the proper order may be used to induce a continuous path in $\mathcal{F}_{N,d}^\mathbb{R}$ from $F$ to $F\Pi$.\end{proof}

Finally, we need paths which induce negations of target vectors. 

\begin{lemma}\label{negating}
Let $G\in\mathcal{F}_{N_1,d}^\mathbb{R}$ and $H=\in\mathcal{F}_{N_2,d}^\mathbb{R}$ and set $F=\begin{pmatrix} G & H\end{pmatrix}\in\mathcal{F}_{N,d}^\mathbb{R}$ where $N=N_1+N_2$. For any $N$ by $N$ diagonal matrix $D$ with diagonal entries from the set $\{-1,1\}$, there is a continuous path from $F$ to $FD$ in $\mathcal{F}_{N,d}^\mathbb{R}$. 
\end{lemma}

\begin{proof}
We note that $D$ factors into a product of diagonal matrices with one diagonal entry equal to $-1$ and $N-1$ diagonal entries equal to $1$. Each of these matrices negates a single column of $F$, and since we may always concatenate continuous paths to get another continuous path, we therefore need only produce a continuous path that negates a single vector of $F$ to obtain the full result.

Without loss of generality, suppose we are attempting to negate $g_1\in G$. The first step in the construction of this path is to apply Lemma \ref{spinning} to position a chaperone $h_1\in H$ so that $g_1$ and $Uh_1$ are orthogonal. Then we may apply Lemma \ref{spinning} to simultaneously rotate $g_1$ and $Uh_1$ to $Uh_1$ and $-g_1$ respectively. All that is now required is to transpose the columns so that $g_1$ and $Uh_1$ arrive at $-g_1$ and $Uh_1$ respectively. This motion is obtained from Lemma \ref{swapping}. Finally, we use Lemma \ref{spinning} to return $H$ to its original position.\end{proof}

\begin{proof}[Proof of Theorem \ref{realconnect}]
We shall show the result using induction on $N$ inside of an induction on $d$, with the following induction structure:

\begin{itemize}
\item[(i)] For $d=2$, the result was shown in \cite{DS03} for all $N\geq 4$. 
\item[(ii)] For $d>2$, we have that connectivity of $\mathcal{F}_{N^\prime,d^\prime}^\mathbb{R}$ for all $N^\prime\geq d^\prime+2$ when $d^\prime <d$ implies connectivity of $\mathcal{F}_{N,d}^\mathbb{R}$ when $d+2\leq N<2d$.
\item[(iii)] For $d>2$, we prove connectivity independently for $N=2d,2d+1,$ and $2d+2$, and then show that connectivity in the case of $N^\prime$ implies connectivity in the case of $N^\prime+d+1$ (which becomes relevant for showing the cases $N=(d+2)+d+1=2d+3$ and beyond).
\end{itemize}

{\bf Case ($N=2d$): } Let $F$ and $G$ belong to $\mathcal{F}_{N,d}^\mathbb{R}$. Let $\mu$ denote the eigensteps for the frame consisting of two successive copies of the standard orthonormal basis. Using Lemma \ref{lift}, we connect both $F$ and $G$ to $F^\prime$ and $G^\prime$ in $\operatorname{int}(\Lambda_{N,d})$, and then connect $F^\prime$ and $G^\prime$ to $H$ and $H^\prime$ with $\Lambda(H)=\Lambda(H^\prime)=\mu$. This implies that $H$ and $H^\prime$ are both a union of two orthonormal bases. We shall now show any two unions of orthonormal bases may be continuously connected in $\mathcal{F}_{2d,d}^\mathbb{R}$.

Since orthonormal bases are permutation equivalent to positively oriented orthonormal bases, we apply Lemma \ref{swapping} to continuously connect our starting union of two successive orthonormal bases to a union of two successive, positively oriented orthonormal bases. Two applications of Lemma \ref{spinning} then connects this union of positively oriented orthonormal bases to the union of two successive standard orthonormal bases.

{\bf Case ($N=2d+1, 2d+2$):} Let $\mu$ denote any eigensteps for a frame $F$ such that $\{f_n\}_{n=1}^{d+1}\in\mathcal{F}_{d+1,d}^\mathbb{R}$ and $\{f_n\}_{n=d+1}^N\in\mathcal{F}_{N-d-1,d}^\mathbb{R}$. That is, $F$ is a union of two tight subframes. In particular, if $N=2d+1$, then $F$ is the union of a members of $\mathcal{F}_{d+1,d}^\mathbb{R}$ and an orthonormal basis. If $N=2d+2$, then $F$ is a union of two members of $\mathcal{F}_{d+1,d}^\mathbb{F}$. Just as in the $N=2d$ case, we now use Lemma \ref{lift} to connect any given $F$ and $G$ to $H$ and $H^\prime$ with eigensteps $\mu$ and note that $H,H^\prime\in\mathcal{F}_{d+1,d}^\mathbb{R}\times \mathcal{F}_{N-d-1,d}^\mathbb{R}$. We now show that $H$ and $H^\prime$ are connected by a path in $\mathcal{F}_{N,d}^\mathbb{R}$ to complete the demonstration of this case.

The central obstruction to connecting $F$ and $G$ in $\mathcal{F}_{d+1,d}^\mathbb{R}\times \mathcal{F}_{N-d-1,d}^\mathbb{R}$ is that $\mathcal{F}_{d+1,d}^\mathbb{R}$ has $2^d$ connected components as shown in \cite{DS03} (alternatively, one may observe that the these frames are the Naimark complements of $(d+1)$-length sequences of $1$s and $-1$s). Using Lemma \ref{negating}, we may continuously connect $F$ to any connected component in $\mathcal{F}_{d+1,d}^\mathbb{R}\times \mathcal{F}_{N-d-1,d}^\mathbb{R}$. Lemma \ref{spinning} then finalizes the connectivity result in these cases.

{\bf Case ($N$ implies $N+d+1$):} We simply connect the frame with $N+d+1$ members to a frame such that the first $d+1$ vectors form a member of $\mathcal{F}_{d+1,d}^\mathbb{R}$. The remaining $N$ vectors form a member of $\mathcal{F}_{N,d}^\mathbb{R}$ and hence we may generalize our previous arguments to connect to a frame whose first $d+1$ vectors form a particular member of $\mathcal{F}_{d+1,d}^\mathbb{R}$. Finally, the connectivity of $\mathcal{F}_{N,d}^\mathbb{R}$ implies that the last $N$ vectors in the frame may be connected to a particular member of $\mathcal{F}_{N,d}^\mathbb{R}$, and hence we have shown connectivity of $\mathcal{F}_{N+d+1,d}^\mathbb{R}$.

{\bf Case ($N^\prime\geq d^\prime+2\geq 4$ for all $d^\prime<d$ implies $d+2\leq N<2d$): } Suppose $N$ and $d$ satisfy $d+2\leq N<2d$, and let $N^\prime=N$ and $d^\prime = N-d$. Then $N^\prime>d^\prime+2$ and our induction step implies $\mathcal{F}_{N^\prime,d^\prime}^\mathbb{R}$ is connected. Thus, by Proposition \ref{equivconnect}, $\mathcal{F}_{N,d}^\mathbb{R}$ is connected.\end{proof}

\section{Real Algebraic Geometry of $\mathcal{F}_{N,d}^\mathbb{F}$}\label{sec:RAG}

A subset $V\subseteq\mathbb{R}^k$ is called a \emph{real algebraic variety} if there is a set of polynomials $\{p_i\}_{i\in I}\subseteq\mathbb{R}[x_1,...,x_k]$ such that $V=\{x\in\mathbb{R}^k:p_i(x)=0\text{ for every }i\in I\}$, where $\mathbb{R}[x_1,...,x_k]$ is the ring of polynomials in the variables $x_1,...,x_k$ with real coefficients. We also let $\langle \{p_i\}_{i\in I}\rangle$ denote the ideal in $\mathbb{R}[x_1,\ldots,x_k]$ generated by $\{p_i\}_{i\in I}$. Given an ideal $I\in\mathbb{R}[x_1,...,x_k]$, we let $V(I)$ denote the set of points $x$ for which $p(x)=0$ for all $p\in I$. Letting $\mathbb{R}[(x_{i,j})_{(i,j)\in[d]\times[N]}]$ denote the polynomials defined on the entries of real $N$ by $d$ matrices, we define the polynomials
\begin{align*}
p_i^\mathbb{R}(X)=&\frac{N}{d}-\sum_{j=1}^N x_{i,j}^2\\
q_j^\mathbb{R}(X)=&1-\sum_{i=1}^d x_{i,j}^2\\
r_{i,j}(X) = &\sum_{k=1}^N x_{k,i}x_{k,j},
\end{align*}
we set $\Pi_{N,d}^\mathbb{R}=\{p_i^\mathbb{R}\}_{i=1}^d\cup\{q_j^\mathbb{R}\}_{j=1}^{N-1}\cup\{r_{i,j}\}_{i<j\in[d]}$, we define the map $\pi_{N,d}^\mathbb{R}: M_{N,d}^\mathbb{R}\rightarrow\mathbb{R}^{Nd-(N-\frac{d}{2}-1)(d-1)}$ by
\[
\pi_{N,d}^\mathbb{R}(X) = \begin{pmatrix}
p_1^\mathbb{R}(X)\\
\vdots\\
p_d^\mathbb{R}(X)\\
q_1^\mathbb{R}(X)\\
\vdots\\
q_{N-1}^\mathbb{R}(X)\\
r_{1,2}(X)\\
\vdots \\
r_{1,d}(X)\\
\vdots\\
r_{d-1,d}(X)
\end{pmatrix},
\]
we use  $I_{N,d}^\mathbb{R}$ to denote ideal of polynomials generated by $\Pi_{N,d}^\mathbb{R}$, and note that $\mathcal{F}_{N,d}^\mathbb{R}=V(I_{N,d}^\mathbb{R})$. Letting 
\[
\mathbb{R}[(x_{i,j})_{(i,j)\in[d]\times[N]},(y_{i,j})_{(i,j)\in[d]\times[N]}]
\] denote the polynomials defined on the real and imaginary components of the entries in complex $N$ by $d$ matrices (where $\iota$ is chosen to satisfy $\iota^2=-1$ and complex entries are identified with $x_{i,j}+\iota y_{i,j}$), we define the polynomials
\begin{align*}
p_i^\mathbb{C}(X,Y)=&\frac{N}{d}-\sum_{j=1}^N x_{i,j}^2+y_{i,j}^2\\
q_j^\mathbb{C}(X,Y)=&1-\sum_{i=1}^d x_{i,j}^2+y_{i,j}^2\\
r_{i,j}^\Re(X,Y) =&\sum_{k=1}^N x_{k,i}x_{k,j}+y_{k,i}y_{k,j}\\
r_{i,j}^\Im(X,Y) =&\sum_{k=1}^N x_{k,i}y_{k,j}-y_{k,i}x_{k,j}.
\end{align*}
We define $\Pi_{N,d}^\mathbb{C}$, $\pi_{N,d}^\mathbb{C}$, and $I_{N,d}^\mathbb{C}$ analogously to the real case but using the above polynomials, and finally note that $\mathcal{F}_{N,d}^\mathbb{C}=V(I_{N,d}^\mathbb{C})$. Thus, we see that $\mathcal{F}_{N,d}^\mathbb{R}$ and $\mathcal{F}_{N,d}^\mathbb{C}$ are real algebraic varieties defined by quadratic polynomials. It should be noted that $q_N^\mathbb{F}$ is a linear combination of the polynomials $\{p_i^\mathbb{F}\}_{i=1}^d\cup\{q_j^\mathbb{F}\}_{j=1}^{N-1}$, so we have excluded it from the generating set.

A real semi-algebraic set consists of sets of the form 
\[
\bigcup_{i=1}^a\bigcap_{j=1}^b \{x\in\mathbb{R}^k: p_{i,j}(x)\ast_{i,j} 0\}
\]
where $p_{i,j}\in\mathbb{R}[x_1,\ldots,x_k]$ for $(i,j)\in[a]\times[b]$ and $\ast_{i,j}$ is ``$<$" or ``$=$".

\begin{proposition}\label{nod-semialgebraic}
The set of NOD frames in $\mathcal{F}_{N,d}^\mathbb{F}$ is semi-algebraic.
\end{proposition}

\begin{proof}
Note that $\ast_{i,j}$ in the definition of a semi-algebraic set may also be ``$\not=$'' because this set is encoded by $p_{i,j}(x)<0$ and $-p_{i,j}(x)<0$. For any nonempty proper subset $A\subset[N]$, let $A^c=[N]\setminus A$ denote the complement, and define the polynomials
\[
u_A^\mathbb{R}(X) = \sum_{n\in A} \sum_{m\in A^c} \left(\sum_{k=1}^d x_{k,n} x_{k,m}\right)^2
\] 
and
\small
\[
u_A^\mathbb{C}(X,Y) =  \sum_{n\in A} \sum_{m\in A^c} \left(\left(\sum_{k=1}^d x_{k,n} x_{k,m} + y_{k,n}y_{k,m}\right)^2 + \left(\sum_{k=1}^d x_{k,n} y_{k,m} - x_{k,n}y_{k,m}\right)^2\right).
\]\normalsize
Note that $u_A^\mathbb{R}(F)=0$ (and $u_A^\mathbb{C}(\Re(F),\Im(F))=0$ for the real and imaginary parts of $F\in\mathcal{F}_{N,d}^\mathbb{C}$) if and only if $\text{span}\{f_n\}_{n\in A}\perp\text{span}\{f_m\}_{m\in A^c}$. Therefore, the NOD frames of $\mathcal{F}_{N,d}^\mathbb{R}$ are exactly the intersection of the sets
\[
\bigcap_{i=1}^d \{X\in M_{N,d}^\mathbb{R}: p_i^\mathbb{R}(X)=0\},\: \bigcap_{j=1}^{N-1} \{X\in M_{N,d}^\mathbb{R}: q_i^\mathbb{R}(X)=0\},
\]
\[
\bigcap_{1\leq i<j\leq d} \{X\in M_{N,d}^\mathbb{R}: r_{i,j}^\mathbb{R}(X)=0\},\text{ and } \bigcap_{A\subset [N], A\not=\emptyset}  \{X\in M_{N,d}^\mathbb{R}: u_A^\mathbb{R}(X)\not=0\},
\]
and hence it is semi-algebraic. A similar definition involving $u_A^\mathbb{C}$ gives that the NOD frames of $\mathcal{F}_{N,d}^\mathbb{C}$ are also semi-algebraic.\end{proof}

A real semi-algebraic set $V$ induces an ideal $\mathcal{I}(V)\subset \mathbb{R}[x_1,\ldots,x_k]$ which consists of all polynomials that vanish on $V$. If $I\subset\mathbb{R}[x_1,\ldots,x_k]$ is an ideal, it is important to note that $\mathcal{I}(V(I))=\sqrt{I}$, where $\sqrt{I}$ is the radical of the ideal $I$, and $\sqrt{I}\not = I$ in general. The \emph{dimension} of a real semi-algebraic set $V$ (denoted $\text{dim}(V)$) is defined to be the dimension of the ring of polynomials $\mathcal{P}(V)=\mathbb{R}[x_1,\ldots,x_k]/\mathcal{I}(V)$, which is equal to the maximal length of chains of prime ideals in $\mathcal{P}(V)$. While the dimension of a real semi-algebraic set may be difficult to compute directly from this defintion, the following proposition shows that the dimension is the usual dimension when the real semi-algebraic is also a manifold.

\begin{proposition}[c.f. Proposition 2.8.14 of \cite{BCR13}]
Let $V\subset\mathbb{R}^k$ be a semi-algebraic set which is a $C^\infty$ submanifold of dimension $d$ in $\mathbb{R}^k$. Then the dimension of $V$ as a semi-algebraic set is also $d$.
\end{proposition}

Combining this proposition, Proposition \ref{nod-semialgebraic}, and Corollary 4.9 of \cite{DS03}, we obtain the following proposition.

\begin{proposition}\label{nod-dimension}
The subset of NOD frames of $\mathcal{F}_{N,d}^\mathbb{R}$ has dimension $(N-\frac{d}{2}-1)(d-1)$ as a real semi-algebraic set, and the subset of NOD frames in $\mathcal{F}_{N,d}^\mathbb{C}$ has dimension $2d(N-d)+d^2-N+1$ as a real semi-algebraic set.
\end{proposition}

In order to define nonsingular points on $\mathcal{F}_{N,d}^\mathbb{F}$, we actually need their dimension as real algebraic varieties. The first step is to demonstrate that the NOD frames are actually dense in $\mathcal{F}_{N,d}^\mathbb{F}$.

\begin{proposition}\label{nod-dense}
The subset of NOD frames in $\mathcal{F}^{\mathbb{F}}_{N,d}$ is dense in $\mathcal{F}^{\mathbb{F}}_{N,d}$.
\end{proposition}

\begin{proof}
Note that a frame in $F\in\mathcal{F}_{N,d}^\mathbb{F}$ is either OD or NOD, so we need only show that there is a NOD frame arbitrarily close to any OD frame. Given an OD frame $F\in\mathcal{F}_{N,d}^\mathbb{F}$, partition it into maximal NOD subsets. An orthonormal set $\{u_i\}_{i=1}^k$ may be obtained by choosing a single vector from each of maximal NOD subsets. Note that for any orthonormal set $\{u_i\}_{i=1}^k$ and any $\varepsilon>0$, there is another orthonormal set $\{\widetilde{u}_i\}_{i=1}^k\subset\text{span}\{u_i\}_{i=1}^k$ such that $\sum_{i=1}^k\Vert u_i-\widetilde{u}_i\Vert^2<\varepsilon^2$, and such that $\langle u_i,\widetilde{u}_j\rangle\not=0$ for all $i,j=1,\ldots,k$. Replacing the vectors $\{u_i\}_{i=1}^k$ with $\{\widetilde{u}_i\}_{i=1}^k$ in our original frame to obtain $\widetilde{F}$, we see that $\widetilde{F}\in\mathcal{F}_{N,d}^\mathbb{F}$ by Lemma \ref{spinning} and we claim that $\widetilde{F}$ is also a NOD frame for $\varepsilon$ sufficiently small. To prove this last step, note that the correlation network of $\widetilde{F}$ retains all the edges from the correlation network of the old frame since $\Vert u_i-\widetilde{u}_i\Vert<\varepsilon$ for $\varepsilon$ small implies $\langle f_k,\widetilde{u}_i\rangle\not=0$ if $\langle f_k,u_i\rangle\not=0$ whenever $f_k$ comes from a finite list. Then the fact that $\langle u_i,\widetilde{u}_j\rangle\not=0$ for $i\not=j$ implies that $\langle f_k,\widetilde{u}_j\rangle\not=0$ whenever $\langle f_k,u_i\rangle\not=0$. Thus, $\gamma(\widetilde{F})$ retains all the connected components of $\gamma(F)$, but also includes edges between all these connected components. Consequently, $\gamma(\widetilde{F})$ is connected and hence $\widetilde{F}$ is NOD. Finally, note that $\Vert F-\widetilde{F}\Vert<\varepsilon$ by construction.\end{proof}

Proposition \ref{nod-dense} gives us that the closure of the subset of NOD frames in $\mathcal{F}_{N,d}^\mathbb{F}$ is all of $\mathcal{F}_{N,d}^\mathbb{F}$. Proposition 2.8.8 of \cite{BCR13} then gives us the dimension of $\mathcal{F}_{N,d}^\mathbb{F}$ as a real-algebraic set has the same dimension as the real-algebraic set of NOD frames in $\mathcal{F}_{N,d}^\mathbb{F}$.

\begin{proposition}\label{funtf-dimension}
As real algebraic sets, the dimensions of $\mathcal{F}_{N,d}^\mathbb{R}$ and $\mathcal{F}_{N,d}^\mathbb{C}$ are $(N-\frac{d}{2}-1)(d-1)$ and $2d(N-d)+d^2-N+1$, respectively.
\end{proposition}

The definition of a nonsingular point of a real algebraic set requires more algebraic background than is necessary for the rest of this paper. The interested reader is referred to Definition 3.3.9 of \cite{BCR13}. For our purposes, we only need the following characterization.

\begin{proposition}[c.f. Proposition 3.3.10 of \cite{BCR13}]
Let $V\subset \mathbb{R}^k$ be a real algebraic variety of dimension $d$. Then $x\in V$ is nonsingular in dimension $d$ if and only if
there exist $k-d$ polynomials $p_1,\ldots,p_{k-d}\in \mathcal{I}(V)$ and an open neighborhood $U$ of $x$ in $\mathbb{R}^k$ such that
\begin{enumerate}
\item $V\cap U = V(\langle p_1,\ldots,p_{k-d}\rangle)\cap U$
\item and the Jacobian
\[
\begin{pmatrix}
\frac{\partial p_1}{dx_1}(x) & \cdots & \frac{\partial p_1}{dx_k}(x)\\
\vdots & \ddots & \vdots \\
\frac{\partial p_{k-d}}{dx_1}(x) & \cdots & \frac{\partial p_{k-d}}{dx_k}(x)
\end{pmatrix}
\]
has rank $k-d$.
\end{enumerate}
\end{proposition}

We now claim that any NOD frame $F\in\mathcal{F}_{N,d}^\mathbb{F}$ is nonsingular. Since the FUNTF varieties are locally manifolds around NOD frames by Corollary 4.9 of \cite{DS03}, there is an open neighborhood $U$ of $F$ in $M_{N,d}^\mathbb{F}$ such that 
\[
\mathcal{F}_{N,d}^\mathbb{R}\cap U = V(I_{N,d}^\mathbb{F})\cap U,
\] 
and by the characterization of the tangent spaces of $\mathcal{F}_{N,d}^\mathbb{R}$ obtained in \cite{S2007}, we know that the null space of the Jacobian of $\pi_{N,d}^\mathbb{F}$ has dimension $\text{dim}(\mathcal{F}_{N,d}^\mathbb{F})$.

\begin{proposition}\label{OD}
The NOD frames in $\mathcal{F}_{N,d}^\mathbb{F}$ are nonsingular in dimension $\text{dim}(\mathcal{F}_{N,d}^\mathbb{F})$, and hence the singular points of $\mathcal{F}_{N,d}^\mathbb{F}$ are contained in set of OD frames of $\mathcal{F}_{N,d}^\mathbb{F}$.
\end{proposition}

Proposition \ref{OD} brings up the possibility that orthodecomposability may not fully characterize the algebraic singularities of $\mathcal{F}_{N,d}^\mathbb{F}$, which provides an interesting problem for future consideration.

\begin{problem}
Either show that $F\in \mathcal{F}_{N,d}^\mathbb{F}$ is a singularity of $\mathcal{F}_{N,d}^\mathbb{F}$ if and only if  $F$ is OD, or exhibit an OD frame in $\mathcal{F}_{N,d}^\mathbb{F}$ is not a sigularity of $\mathcal{F}_{N,d}^\mathbb{F}$.
\end{problem}

By defining closed sets to be real algebraic varieties, we get a topology on $\mathbb{F}^k$ called the \emph{real Zariski topology} (note that this is different from the usual Zariski topology on $\mathbb{C}^k$). For a subset $V\subseteq\mathbb{F}^k$ we use the notation $\mathcal{Z}(V)$ to denote the closure of $V$ in this topology, that is, $\mathcal{Z}(V)$ is the smallest variety containing $V$. We will also use the real Zariski topology of a real algebraic variety $V\subseteq\mathbb{F}^k$ to mean the subspace topology of the real Zariski topology of $\mathbb{F}^k$. Note that any set which is closed in the real Zariski topology is also closed in the standard topology, but the converse of this is far from true.

A variety $V\subseteq\mathbb{F}^k$ is called \emph{irreducible} if we cannot write $V=V_1\cup V_2$ where $V_1$ and $V_2$ are proper subvarieties of $V$. A variety is called \emph{nonsingular} if it has no singular points. If a variety is reducible and path-connected, then any point in the intersection of two irreducible components is not nonsingular. Therefore, if a variety is path-connected and nonsingular, then it must be irreducible. When $N$ and $d$ are not relatively prime, it is a simple exercise to construct OD frames in $\mathcal{F}^{\mathbb{F}}_{N,d}$, and the possibility that these are singular points means that irreducibility does not follow immediately from connectedness. In our case, Proposition \ref{irredprop} gives us a way forward.

\begin{proposition}\label{irredprop}
Suppose $V$ is a real algebraic variety such that
\begin{itemize}
\item[(i)] the set of nonsingular points of $V$ is path-connected, and
\item[(ii)] the set of nonsingular points is dense in $V$ (in the standard topology).
\end{itemize}
Then $V$ is an irreducible real algebraic variety.
\end{proposition}
\begin{proof}
Let $V_0$ denote the set of nonsingular points of $V$.  We first claim that (i) implies $\mathcal{Z}(V_0)$ is irreducible.  To see this, suppose to the contrary that there are two subvarieties of $\mathcal{Z}(V_0)$, say $V_1$ and $V_2$, such that $V_1\cup V_2=\mathcal{Z}(V_0)$ and there exists $x\in V_0\cap V_1$ and $y\in V_0\cap V_2$.  Then any path in $V_0$ connecting $x$ and $y$ must pass through $V_1\cap V_2$ (since $\mathcal{Z}(V_0)\setminus(V_1\cap V_2)=(\mathcal{Z}(V_0)\setminus V_1)\cup(\mathcal{Z}(V_0)\setminus V_2)$ is disconnected). Overall, we have that $V_1\cap V_2$ intersects $V_0$ nontrivially, but this contradicts the fact that components must intersect at singular points; this fact follows from Proposition 3.3.10 of \cite{BCR13}.

Next, we apply (ii) to get $\overline{V_0}=V$, where bar denotes closure in the standard topology. Since any Zariski closed set is also closed in the standard topology, we further have $V=\overline{V_0}\subseteq\mathcal{Z}(V_0)$. Moreover, since $V_0\subseteq V$ and $V$ is Zariski closed, the reverse containment also holds: $\mathcal{Z}(V_0)\subseteq V$. As such, $V=\mathcal{Z}(V_0)$ and so $V$ is irreducible by the previous paragraph.\end{proof}

The converse of Proposition~\ref{irredprop} is false (see Figure~\ref{figure1} for counterexamples). In our situation, we note that the argument of Proposition \ref{nod-dense} indicates that, for any OD frame $F\in\mathcal{F}_{N,d}^\mathbb{F}$, there is a continuous path $\gamma:[0,1]\rightarrow \mathcal{F}_{N,d}^\mathbb{F}$ such that $\gamma(t)$ is a NOD frame for any $t\in[0,1)$ and $\gamma(1)=F$. Moreover, the NOD frames are dense in $\mathcal{F}_{N,d}^\mathbb{F}$ and are a subset of the nonsingular points of $\mathcal{F}_{N,d}^\mathbb{F}$, and hence the nonsingular points of $\mathcal{F}_{N,d}^\mathbb{F}$ are dense in $\mathcal{F}_{N,d}^\mathbb{F}$. Consequently, the following lemma then follows from Proposition \ref{irredprop}.

\begin{lemma}\label{nod-irreducible}
If the NOD frames in $\mathcal{F}_{N,d}^\mathbb{F}$ form a connected set, then $\mathcal{F}_{N,d}^\mathbb{F}$ is an irreducible real algebraic variety.
\end{lemma}

\begin{figure}[h]
\label{figure1}
\subfigure[$y^2=x^2(x+1)$]{
\includegraphics[scale=0.6]{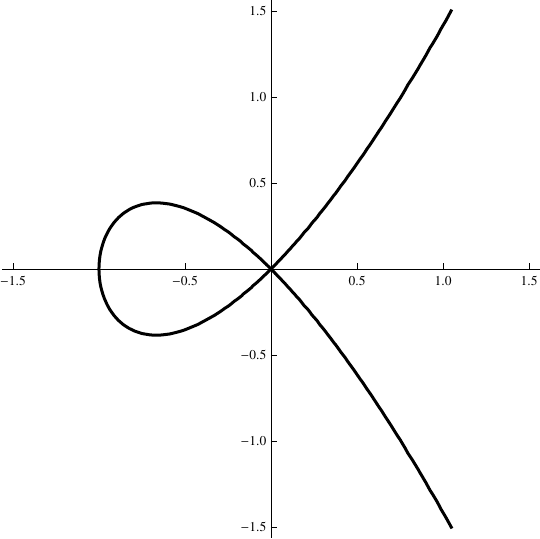}
}
\subfigure[$x^4+y^4=x^2+y^2$]{
\includegraphics[scale=0.6]{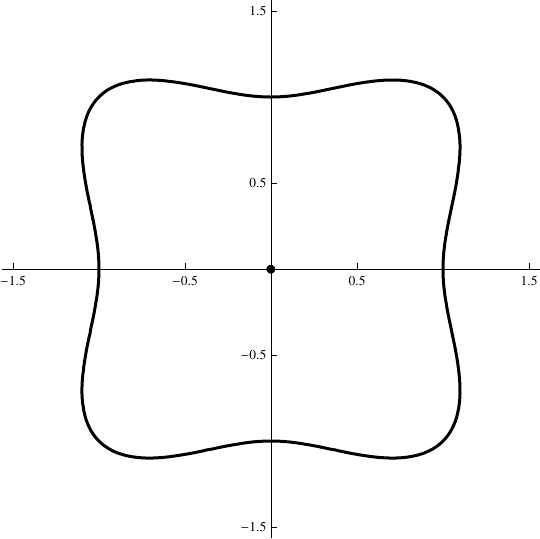}
}
\caption{Real algebraic counterexamples to the converse of Proposition~\ref{irredprop}. (a) An irreducible variety which violates (i); removing the singularity at the origin would disconnect the nonsingular points into three pieces. (b) An irreducible variety which violates (ii); the nonsingular points form a curve which is not dense in the variety, since the variety also contains an isolated point (a singularity).
}
\end{figure}

Given a subset of Euclidean space $V\subseteq\mathbb{R}^k$, we say $W\subseteq V$ is \emph{generic} (in $V$) if $W$ contains an open and dense set in the topology induced on $V$ by the standard topology on $\mathbb{R}^k$. If $V$ is an irreducible variety over the complex numbers then any Zariski-open subset of $V$ is generic, however this is not necessarily the case for irreducible real algebaric varieties. Indeed, the variety shown in Figure~\ref{figure1}(b) is an example of an irreducible variety that contains a Zariski-open set that is not dense in the standard topology; namely, the complement of the origin. Nevertheless, using the same hypotheses from Proposition~\ref{irredprop}, we can demonstrate the following:

\begin{proposition}\label{realgeneric}
Let $V$ be a real algebraic variety such that the nonsingular points of $V$ form a dense connected subset. If $U$ is another real algebraic variety, then either $V\setminus U$ is empty or generic in $V$.
\end{proposition}

\begin{proof}
Equivalently, we show that either $V\subset U$ or $V\cap U$ is nowhere dense in $V$ in the topology on $V$ induced by the standard topology. If $V\cap U$ is nowhere dense in $V$ in the relative topology, then the statement holds, so suppose that $V\cap U$ is not nowhere dense in $V$ in the relative toplogy. Because $V\cap U$ is closed and not nowhere dense, this means that $V\cap U$ contains a nonempty open subset $Q\subset V$ in this relative topology. Now, let $\mathcal{U}$ denote an open cover of the nonsingular points of $V$ so that for each $W\in\mathcal{U}$, there is an analytic coordinate patch on $W$. If $Q$ does not intersect some member $W$ of $\mathcal{U}$, then $Q$ is entirely contained in the singular points of $V$, which contradicts the hypothesis that the nonsingular points of $V$ form a dense subset of $V$. Thus, $Q$ intersects some $W\in \mathcal{U}$, and hence there is a connected, nonempty open subset $R\subset Q\cap W$. 

Now, there are real polynomials $f$ and $g$ such that
\[
V = \{x: f(x) =0\}\text{ and } U = \{x:g(x)=0\}
\]
since $V$ and $U$ are real algebraic varieties. Thus, $f$ and $g$ must coincide on $R$. Because $W$ admits an analytic coordinate system $\phi:Q\rightarrow W$ for $Q$ some nonempty open subset of a Euclidean space, we have that
\[
f\circ\phi - g\circ \phi = 0
\]
on $\phi^{-1}(R)$ as a multivariate analytic function. We claim that $f\circ\phi-g\circ\phi=0$ on all of $\phi^{-1}(W)$. This is simply a consequence of the Identity Principle for single-variable analytic functions. That is, suppose $h\colon\mathbb{C}^k\rightarrow\mathbb{C}$ is analytic and that $h(x)=0$ for all $x\in\mathbb{R}^k$ with $\Vert x\Vert < r$. Then by fixing $x_2,\ldots,x_k$ with $x_2^2+\cdots+x_k^2<r^2$, we have that
\[
h_1(x_1) = h(x_1,x_2,\ldots,x_k)
\]
is a one-dimensional analytic function that vanishes on a sequence of points having an accumulation point in $\mathbb{C}$. Consequently, $h_1(x_1) = 0$ for all $x_1\in\mathbb{C}$, and hence
\[
h(x_1,\ldots,x_k)=h(x_2,\ldots,x_k).
\]
Now, since $h(x_2,\ldots,x_k)$ still vanishes on a ball contained in $\mathbb{R}^{k-1}$, we may use induction to see that $h=0$ on all of $\mathbb{C}^k$. 

We then have that $f=g$ on all of $W$. If $W^\prime\in \mathcal{U}$ intersects $W$ nontrivially, then the same reasoning as above shows that $f=g$ on $W^\prime$. Now, let $A$ denote the union of all $W^\prime\in \mathcal{U}$ such that $f=g$ on $W^\prime$ and let $B$ denote the union of all open sets $W^\prime\in\mathcal{U}$ such that $f\not = g$ on $W^\prime$. Then $A$ and $B$ are both open, and if $A$ and $B$ intersect nontrivially, there is a nontrivial intersection between some $W^\prime$ and $W^{\prime\prime}$ in $\mathcal{U}$ such that $f=g$ on $W^\prime$ and $f\not = g$ on $W^{\prime\prime}$. This is a contradiction by our above observation, so we see that the set of nonsingular points of $V$ coincides with the disjoint union of open $A$ and $B$, which, by connectedness of $V$ and nonemptyness of $A$, implies that $B$ is empty and hence $f=g$ on all nonsingular points of $V$.

Since $U$ is closed, the above reasoning immediately implies that $V\subset U$.\end{proof}

Overall, if the nonsingular points of a real algebraic variety form a dense connected subset, then the nonempty Zariski-open subsets of that variety are generic, as desired. It should be noted that the above proposition employs the topological definition of connectivity, but Corollaries \ref{cmplxNODconnect} and \ref{realNODconnect} say that our set of nonsingular points form a path-connected set. However, it is clear that the nonsingular points form an analytic manifold, and hence these two definitions of connectivity coincide. 

\section{Connectivity of the subset of NOD frames in $\mathcal{F}_{N,d}^\mathbb{F}$}\label{sec:nod-connect}

In this section, we refine the results of the previous section to show that, given two NOD frames $F,G\in\mathcal{F}_{N,d}^\mathbb{F}$, there is a continuous path connecting $F$ and $G$ in $\mathcal{F}_{N,d}^\mathbb{F}$ that does not pass through the OD frames. 

To begin with, we need some lemmas that reveal the structure of the OD frames. Our first lemma essentially allows us to know that a frame is NOD if we can extract a NOD basis from the frame.

\begin{lemma}[Proposition~4.2 in~\cite{S2012}]\label{basisODC}
A frame $F$ is OD if and only if every basis contained in $F$ is OD.
\end{lemma}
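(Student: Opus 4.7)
My plan is to split the biconditional into its two directions: the ``only if'' direction is a routine dimension count, while the ``if'' direction requires a careful graph-theoretic exchange argument.

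For the ``only if'' direction I assume $F$ is OD via a nontrivial partition $[N]=S\sqcup T$ with $W:=\text{span}\{f_n\}_{n\in S}=\text{span}^\perp\{f_n\}_{n\in T}$, and take any basis $B\subseteq F$. Splitting $B=B_S\sqcup B_T$ with $B_S:=B\cap\{f_n\}_{n\in S}$ and $B_T:=B\cap\{f_n\}_{n\in T}$, the inclusions $\text{span}(B_S)\subseteq W$ and $\text{span}(B_T)\subseteq W^\perp$ combined with the fact that $B$ is a basis of $\mathbb{F}^d=W\oplus W^\perp$ force both inclusions to be equalities; in particular both $B_S$ and $B_T$ are nonempty, exhibiting $B$ as OD.

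For the ``if'' direction I would argue the contrapositive: given an NOD frame $F$, produce a basis $B\subseteq F$ with $\gamma(B)$ connected. Using Proposition \ref{corrnet}, it suffices to find a basis whose correlation graph stays connected, so I pick $B$ among all bases in $F$ minimising the component count $c(B)$ of $\gamma(B)$ and assume for contradiction $c(B)=k\geq 2$ with components $C_1,\dots,C_k$. Since edges in $\gamma(B)$ encode non-orthogonality, the spans $V_i:=\text{span}(C_i)$ are pairwise orthogonal and give $\mathbb{F}^d=V_1\oplus\cdots\oplus V_k$. Next I rule out the natural candidate OD partition $\{f\in F:f\in V_1\}\sqcup\{f\in F:f\in V_1^\perp\}$: both pieces have orthogonally complementary spans (they contain $C_1$ and $C_2\cup\cdots\cup C_k$), so the NOD hypothesis on $F$ forces the existence of some $u\in F$ with nonzero components in both $V_1$ and $V_1^\perp$; writing $u=u_1+\cdots+u_k$ with $u_i\in V_i$ gives $|J|\geq 2$ for $J:=\{i:u_i\neq 0\}$.

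The exchange step expands $u=\sum_{b\in B}\alpha_b b$ (nonzero exactly on the supports of the $u_i$ for $i\in J$), picks $j\in J$ and $b^*\in C_j$ with $\alpha_{b^*}\neq 0$, and replaces $B$ by $B':=(B\setminus\{b^*\})\cup\{u\}$. In $\gamma(B')$ the vertex $u$ is adjacent to at least one vertex of each $C_i$, $i\in J$, because $u_i\neq 0$ lies in the span of $C_i$; thus $u$ glues together all the $C_i$ with $i\in J\setminus\{j\}$, and up to pieces of $C_j\setminus\{b^*\}$ attaching to $u$ one obtains $c(B')\leq k-|J|+1\leq k-1$, contradicting minimality.

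The main obstacle is handling $C_j\setminus\{b^*\}$, which can disconnect when $b^*$ is a cut vertex of $\gamma(C_j)$, leaving some pieces potentially stranded from $u$. I would address this by exploiting the freedom in choosing $(j,b^*)$ across $J$ and across the $\alpha_b$-support in $C_j$: for any neighbour $v$ of $b^*$ in $\gamma(C_j)$ we have $\langle u,v\rangle=\langle u_j,v\rangle$, and since $u_j$ has a nonzero $b^*$-coordinate, a case analysis on the structure of $\gamma(C_j)$ and the support of $u_j$ produces at least one admissible exchange pair, iterating the swap if a single one falls short. Rigorously verifying that a good exchange always exists is the delicate, finicky part of the proof; the rest---dimension counts, orthogonality of component spans, extraction of a mixed $u$ from the NOD hypothesis, and the final component tally---is routine bookkeeping.
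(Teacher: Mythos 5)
Your ``only if'' direction is correct and routine: splitting a basis $B\subseteq F$ along the OD partition of $F$ and counting dimensions in $W\oplus W^\perp$ does force $\text{span}(B_S)=W$ and $\text{span}(B_T)=W^\perp$, so both are nonempty and exhibit $B$ as OD. The ``if'' direction, however, has a genuine gap located exactly where you flag difficulty. Minimizing $c(B)$ over bases $B\subseteq F$ and extracting a mixing vector $u\notin V_1\cup V_1^\perp$ from the NOD hypothesis are both fine steps. But the exchange $B'=(B\setminus\{b^*\})\cup\{u\}$ with $\alpha_{b^*}\neq 0$ does not obviously decrease $c$. Your tally $c(B')\leq k-|J|+1$ presumes every piece of $C_j\setminus\{b^*\}$ attaches to the component of $u$; a stranded piece adds one to the count, and when $|J|=2$ a single stranded piece already yields $c(B')\geq k$, so minimality is not contradicted. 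Stranding can genuinely occur: $\langle u,v\rangle=\langle u_j,v\rangle$ can vanish by cancellation even for $v$ adjacent to $b^*$ in $\gamma(B)$ and even with $\alpha_{b^*}\neq 0$, so ``$u_j$ has a nonzero $b^*$-coordinate'' tells you nothing about the neighbors of $b^*$. You would need to prove that some admissible pair $(j,b^*)$ avoids all stranding simultaneously, and ``iterating the swap'' is not a fix: if no single admissible swap lowers $c$, iterating swaps cannot beat a minimum you already attained. This step is left unproved, and it is the crux of the direction.

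The present paper does not prove the lemma itself (it cites Proposition~4.2 of~\cite{S2012}), but Lemma~\ref{reorderODC}(ii) proves the same contrapositive by a construction that sidesteps the exchange machinery entirely: build the basis greedily, setting $b_1=f_1$ and at step $k$ picking any $f_{n_k}\in F$ lying in neither $\text{span}\{b_1,\ldots,b_{k-1}\}$ nor $\text{span}^\perp\{b_1,\ldots,b_{k-1}\}$. If no such choice existed, every remaining frame vector would lie in one of these two orthogonally complementary subspaces and $F$ would be OD, contradicting the hypothesis (the complement part is nonempty since $k-1<d$ and $F$ spans $\mathbb{F}^d$). And since each new $b_k$ is non-orthogonal to $\text{span}\{b_1,\ldots,b_{k-1}\}$, it has nonzero inner product with some earlier $b_i$, so $\gamma(B)$ is connected and $B$ is NOD by Proposition~\ref{corrnet}. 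This replaces your extremal-plus-exchange argument with a two-line existence check and an immediate connectivity observation; I would advise adopting it rather than trying to repair the exchange step.
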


This next lemma tells us that the eigensteps of an OD frame must be on the boundary of $\Lambda_{N,d}$. Thus, when we use Lemma \ref{lift} to lift paths through the interior of $\Lambda_{N,d}$, we know that the path never crosses an OD frame.

\begin{lemma}\label{boundaryOD}
Suppose $F\in \mathcal{F}_{N,d}^{\mathbb{F}}$ is OD then $\Lambda(F)\in \partial \Lambda_{N,d}$.
\end{lemma}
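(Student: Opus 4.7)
The plan is to exploit the block-diagonal structure that OD forces on every partial frame operator and produce a non-forced eigenvalue coincidence that the interior of $\Lambda_{N,d}$ forbids.

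First I would set up the decomposition. Since $F$ is OD there is a nontrivial partition $[N]=S\cup T$ with $W:=\mathrm{span}\{f_n\}_{n\in S}$ satisfying $W^\perp=\mathrm{span}\{f_n\}_{n\in T}$, and the unit-norm condition together with nontriviality forces $k:=\dim W\in\{1,\dots,d-1\}$. Each rank-one summand $f_nf_n^*$ preserves $W$ and $W^\perp$, so every partial frame operator splits as $F_nF_n^*=A_n\oplus B_n$; hence the multiset $\{\lambda_{n;i}\}_{i=1}^d$ is the disjoint union of $\mathrm{spec}(A_n)$ and $\mathrm{spec}(B_n)$, and $\mathrm{rank}(F_nF_n^*)\le\min(a_n,k)+\min(b_n,d-k)$ with $a_n:=|S\cap[n]|$, $b_n:=n-a_n$. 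The relative interior of $\Lambda_{N,d}$ forces strict interlacing at every coordinate not rigidified by the defining equalities; the only rigid equalities are the rank-forced $\lambda_{n;i}=0$ for $i\le d-n$ and the dual rank-forced $\lambda_{n;i}=N/d$ for $i\ge N-n+1$. It therefore suffices to locate a step at which the block structure contributes an eigenvalue coincidence beyond these generic counts.

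To produce the obstruction I would consider $n_S:=\max S$, at which $A_{n_S}=(N/d)I_W$, because the $A$-block hits its tight endpoint once all of $S$ has been absorbed and cannot overshoot thereafter. This contributes $k$ copies of $N/d$ to $\lambda_{n_S}$, which exceeds the forced count $\max(0,n_S+d-N)$ whenever $n_S<N-d+k$; in that case $\lambda_{n_S}$ carries an extra within-level equality at a non-forced position, putting $\lambda(F)$ on $\partial\Lambda_{N,d}$. A symmetric argument with $n_T:=\max T$ handles the analogous case. If both $n_S,n_T$ crowd the upper end of $[N]$ instead, then the rank bound at early steps takes over: since $|S|=kN/d>k$ and $|T|=(d-k)N/d>d-k$ (using $N>d$), the lattice path $n\mapsto(a_n,b_n)$ from $(0,0)$ to $(|S|,|T|)$ exits both strips $\{a\le k\}$ and $\{b\le d-k\}$, and the first such exit (for $F$ or its time reversal $F^{\mathrm{rev}}$) produces either an extra zero or an extra $N/d$ in some $\lambda_n$. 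In the easy central regime $n\in[d-1,N-d]$ the forced-equality budget vanishes altogether, so the OD-shared factor of degree at least $\min(k,d-k)\ge 1$ between the consecutive characteristic polynomials of $F_nF_n^*$ and $F_{n+1}F_{n+1}^*$ is itself the non-forced coincidence.

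The main obstacle I anticipate is the combinatorial bookkeeping in the regime $d+2\le N<2d$, where no central range is available and one must compare the OD-shared factor $\min(k,d-k)$ against the forced budget $\max(0,d-n-1)+\max(0,n+d-N)$ at each step in turn; here the hypothesis $N\ge d+2$ together with the integrality of $kN/d$ and $(d-k)N/d$ is what guarantees strict excess at at least one step, so that the three mechanisms above (early $A$- or $B$-saturation and early lattice exit in forward or reverse time) together cover every admissible placement of $(S,T)$.
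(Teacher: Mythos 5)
Your high-level plan---use the block decomposition $F_nF_n^*=A_n\oplus B_n$ forced by OD to manufacture a non-forced eigenvalue coincidence---is the right intuition, and it is indeed what makes the lemma true.  But as written the argument is an incomplete sketch, not a proof, and it misses a much shorter route that avoids all the case bookkeeping you worry about at the end.

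The paper's proof looks at the \emph{first} time, rather than the last, that the block structure manifests: if $F$ is OD via $[N]=S\cup T$, let $k$ be the minimum of whichever part does \emph{not} contain the index $1$.  Then $k>1$ and $f_k\perp f_i$ for every $i<k$, so $f_k$ lies in the null space of the $(k-1)$st partial frame operator.  Hence the spectrum at step $k$ is exactly the spectrum at step $k-1$ with a single $0$ promoted to $1$, and in particular the top eigenvalue does not move:\ $\lambda_{k-1;1}(F)=\lambda_{k;1}(F)$ (the top eigenvalue at step $k-1$ is already $\ge 1$ since $\|f_1\|=1$).  That single tight interlacing inequality already places $\lambda(F)$ on $\partial\Lambda_{N,d}$.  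No comparison with ``forced'' equality budgets, no lattice-path analysis, no appeal to $N\ge d+2$ (which, incidentally, is not a hypothesis of this lemma---for $N=d$ or $N=d+1$ the polytope has empty interior and the statement is vacuous) is needed.

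There are also concrete gaps in your sketch as it stands.  Your statement of which coordinate equalities are ``rigid'' in the relative interior is asserted rather than established, and the expressions you give for the forced budgets (e.g.\ ``$\lambda_{n;i}=0$ for $i\le d-n$'') don't match the rank constraint (the forced zeros at step $n$ occupy $d-n$ positions, those \emph{beyond} rank $n$), so your counting comparison at $n_S$ is not pinned down.  More seriously, the final two mechanisms---``the first such exit produces an extra zero or extra $N/d$'' and ``the OD-shared factor of degree $\min(k,d-k)$ is the non-forced coincidence''---are stated heuristically, and you yourself flag that the regime $d+2\le N<2d$ requires bookkeeping you have not carried out.  A reader cannot verify from this sketch that every placement of $(S,T)$ is covered.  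The fix is not to finish the bookkeeping but to replace it with the local argument at $k=\min(\text{complementary part of }1)$, which handles all cases at once.
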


\begin{proof}
Since $F$ is OD, there is an index $k>1$ such that $f_k$ is orthogonal to $f_i$ for each $i<k$. Thus, the nonzero values of $\lambda_k(F)$ consists of a $1$ and all of the nonzero values of $\lambda_{k-1}(F)$. Since the largest nonzero value of $\lambda_{k-1}$ is at least 1, this means that $\lambda_{k-1; 1}(F) = \lambda_{k;1}(F)$ and hence $\Lambda(F)$ is on the boundary of $\Lambda_{N,d}$.\end{proof}

Our final lemma for this section tells us that if we reorder any NOD frame so that the first $d$ vectors of the frame form a NOD basis, then there are no OD frames that map to the same eigensteps. This means that when the $V_n$'s are being continuously diagonalized in Lemma \ref{lift}, the path avoids the OD frames.

\begin{lemma}\label{reorderODC}
Let $F=\{f_n\}_{n=1}^N\in\mathcal{F}_{N,d}^\mathbb{F}$.
\begin{itemize}
\item[(i)] If the first $d$ vectors in $F$, $\{f_n\}_{n=1}^d$, form a NOD basis, then $\Lambda(F)$ is not in the image of the OD frames under the eigensteps map.

\item[(ii)] If $F$ is NOD, then there is a permutation $\sigma$ such that $\{f_{\sigma(n)}\}_{n=1}^d$ is a NOD basis and hence $F^\prime=\{f_{\sigma(n)}\}_{n=1}^N$ has eigensteps which are not in the image of the OD frames under the eigensteps map.
\end{itemize}

\end{lemma}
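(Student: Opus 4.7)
Part (ii) follows quickly from part (i) via Lemma~\ref{basisODC}. Since $F$ is NOD, the contrapositive of Lemma~\ref{basisODC} furnishes a basis $B\subseteq F$ that is itself NOD; taking $\sigma$ to be any permutation sending the indices of $B$ to $\{1,\ldots,d\}$ makes $\{f_{\sigma(n)}\}_{n=1}^d$ a NOD basis, and the eigenstep conclusion is then immediate from part (i) applied to $F'=\{f_{\sigma(n)}\}_{n=1}^N$.

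For part (i), my plan is to argue by contradiction. Suppose $G\in\mathcal{F}_{N,d}^{\mathbb{F}}$ is OD with $\lambda(G)=\lambda(F)$. Matching eigensteps at step $d$ forces the spectrum of $G_dG_d^*$ to equal that of $F_dF_d^*$, which is strictly positive because $\{f_1,\ldots,f_d\}$ is a basis, so $\{g_1,\ldots,g_d\}$ is a basis too. Applying Lemma~\ref{basisODC} to the OD frame $G$ then forces this basis to be an OD basis. I will then adapt the argument from the proof of Lemma~\ref{boundaryOD}: within the OD basis $\{g_1,\ldots,g_d\}$, let $k\leq d$ be the first index at which the ordering crosses between the two blocks of the orthogonal decomposition. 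Then $g_k$ is orthogonal to $g_1,\ldots,g_{k-1}$, so $g_k$ lies in the kernel of $G_{k-1}G_{k-1}^*$; consequently the eigenvalue multiset of $G_kG_k^*$ is obtained from that of $G_{k-1}G_{k-1}^*$ by adding a single $1$. Matching this through $\lambda(F)=\lambda(G)$ and expanding $\operatorname{tr}\bigl((F_kF_k^*)^2\bigr)=\operatorname{tr}\bigl((F_{k-1}F_{k-1}^*)^2\bigr)+2f_k^*F_{k-1}F_{k-1}^*f_k+1$ while comparing to the eigenvalue identity $\sum b_i^2=\sum a_i^2+1$ yields $f_k^*F_{k-1}F_{k-1}^*f_k=0$, equivalently $f_k\in\operatorname{span}(f_1,\ldots,f_{k-1})^\perp$.

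The genuine obstacle is to upgrade this single orthogonality into a full OD decomposition of $\{f_1,\ldots,f_d\}$, which is what actually contradicts the NOD basis hypothesis. The OD structure of $G$ makes every $G_nG_n^*$ block-diagonal with respect to the ambient decomposition $W_1\oplus W_2$, so at every step transition one of the two blocks is unchanged and contributes a preserved subset of eigenvalues to the spectrum. My plan is to translate each such preservation to $F$ via the same trace-of-square comparison: each preserved eigenvalue at step $n\to n+1$ forces $f_{n+1}$ to be orthogonal to the corresponding eigenspace of $F_nF_n^*$. Iterating this across steps $k,k+1,\ldots,d$, I would accumulate enough orthogonality constraints on $f_{k+1},\ldots,f_d$ to partition $\{1,\ldots,d\}$ into two index sets $S,T$ with $\operatorname{span}\{f_i\}_{i\in S}\perp\operatorname{span}\{f_i\}_{i\in T}$, contradicting that $\{f_1,\ldots,f_d\}$ is a NOD basis.
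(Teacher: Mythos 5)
Your reduction of~(ii) to~(i) via Lemma~\ref{basisODC} is correct as far as logic goes, but it quietly discards work the paper's proof of~(ii) actually needs. The paper does \emph{not} take an arbitrary NOD basis $B\subseteq F$; it builds the basis inductively so that each $f_{n_k}$ is \emph{neither} in $\operatorname{span}\{b_1,\ldots,b_{k-1}\}$ \emph{nor} in $\operatorname{span}\{b_1,\ldots,b_{k-1}\}^\perp$. The resulting basis is therefore not merely NOD --- it is ``sequentially correlated'' in the sense that $f_k^*F_{k-1}F_{k-1}^*f_k>0$ for every $k\in\{2,\ldots,d\}$. That stronger property is what actually propagates through the eigenstep equality and rules out a competing OD frame: if $\lambda(G)=\lambda(F)$ with $G$ OD, then $\{g_1,\ldots,g_d\}$ is an OD basis (by your rank argument plus Lemma~\ref{basisODC}), so some $g_k\perp g_1,\ldots,g_{k-1}$, forcing $f_k^*F_{k-1}F_{k-1}^*f_k=0$, contradicting sequential correlation. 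Picking an arbitrary NOD basis loses this.

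The gap in your plan for~(i) is in the step ``each preserved eigenvalue at step $n\to n+1$ forces $f_{n+1}$ to be orthogonal to the corresponding eigenspace of $F_nF_n^*$.'' That is false whenever the eigenvalue's multiplicity \emph{drops} across the step. If $\mu$ has multiplicity $m$ in $F_nF_n^*$ and multiplicity $m-1$ in $F_{n+1}F_{n+1}^*$, then $\mu$ still appears in both spectra (so it looks ``preserved'' in the multiset), yet $f_{n+1}$ generically has a nonzero component in the $\mu$-eigenspace; one copy of $\mu$ simply survives because the rank-one bump only ejects a single eigenvalue from a repeated one. The concrete failure is already at the level you identify: with $\lambda_{n}=(1,1,0)$ and $\lambda_{n+1}=(1+s,1,1-s)$, the middle $1$ is ``preserved'' but $f_{n+1}$ need not be orthogonal to the two-dimensional $1$-eigenspace --- it is enough that $f_{n+1}$ be orthogonal to some \emph{line} inside it. So the orthogonality constraints you hope to accumulate for $f_{k+1},\ldots,f_d$ simply are not there, and the proposed induction does not produce an OD partition of $\{1,\ldots,d\}$.

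In fact, as stated, part~(i) is not salvageable by any argument: a NOD basis in the first $d$ slots does \emph{not} preclude an OD frame with the same eigensteps. Take $d=3$, $N=6$, $F=(e_1,e_2,f_3,e_3,f_5,f_6)$ where $f_3=(a,b,c)$ has all entries nonzero and $\{f_3,f_5,f_6\}$ is an orthonormal basis; then $\{f_1,f_2,f_3\}$ is a NOD basis, $F$ is a (NOD) FUNTF, and its eigensteps $\bigl((1,0,0),(1,1,0),(1+s,1,1-s),(2,1,1),(2,2,1),(2,2,2)\bigr)$ with $s=\sqrt{a^2+b^2}$ are also realised by the OD frame $G$ with $g_2=g_5=e_3$ and $g_1,g_3,g_4,g_6$ a suitable FUNTF for $\operatorname{span}(e_1,e_2)$. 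The paper's one-line proof of~(i) suffers from the same blind spot (a NOD basis can perfectly well have a step at which the nonzero eigenvalues gain a single $1$, e.g.\ $n=2$ above); what rescues the downstream applications is precisely the sequentially correlated construction in the proof of~(ii). You should prove~(i) under that strengthened hypothesis (no $k\le d$ with $f_k\perp\operatorname{span}\{f_1,\ldots,f_{k-1}\}$), where the contradiction is immediate from the ``add a $1$'' step in $\lambda(G)$ and the trace-of-squares identity you already wrote down, and then reproduce the paper's inductive choice of $\sigma$ in~(ii) rather than invoking Lemma~\ref{basisODC} abstractly.
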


\begin{proof}
First, we prove part (i). For the eigensteps of an OD basis $\{g_n\}_{n=1}^N$, there will be a $k>1$ such that the nonzero values of $\lambda_k(\{g_n\}_{n=1}^N)$ consist of the nonzero values of $\lambda_{k-1}(\{g_n\}_{n=1}^d)$ together with 1. Consequently, there are no OD frames with eigensteps $\Lambda(\{f_n\}_{n=1}^N)$.

To prove part (ii), we first identify a NOD basis from $F$, $B=\{b_n\}_{n=1}^d$. Let $b_1=f_1$ and set $n_1=1$. Inductively, we now choose $b_k=f_{n_k}$ for $d\geq k>1$ so that $f_{n_k}$ is not in $\text{span}^\perp \{b_n\}_{n=1}^{k-1}$ or $\text{span} \{b_n\}_{n=1}^{k-1}$, and $n_k$ is not in $\{n_1,\ldots,n_{k-1}\}$. We claim that there is always such an $f_{n_k}$. Suppose there is not. Then all $f_n$ such that $n\not\in\{n_1,\ldots,n_{k-1}\}$ are either in $\text{span}^\perp \{b_n\}_{n=1}^{k-1}$ or $\text{span} \{b_n\}_{n=1}^{k-1}$, and since neither of these spaces will be empty, this implies that $F$ is necessarily OD, which contradicts our hypothesis. As such, we may take $\sigma$ to be any permutation sending $k$ to $n_k$ for all $k=1,\ldots,d$.\end{proof}

Now that we have these lemmas, it is fairly straightforward to prove the refined connectivity result for all of the cases where $N\not = 2d$.

\begin{theorem}\label{non-orthconnect}
Suppose $N\geq d+2\geq 4$ and $N\not = 2d$. Then the NOD members of $\mathcal{F}_{N,d}^\mathbb{F}$ form a path-connected set.
\end{theorem}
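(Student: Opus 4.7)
The plan is to adapt the case analysis of the proof of Theorem~\ref{realconnect}, ensuring at every step that the constructed paths stay inside the NOD locus. The central enabling fact is Lemma~\ref{boundaryOD}: every OD frame has eigensteps in $\partial\Lambda_{N,d}$, so a frame whose eigensteps lie in $\operatorname{int}(\Lambda_{N,d})$ is automatically NOD. Since $\Lambda_{N,d}$ is convex, every linear path from $\lambda\in\Lambda_{N,d}$ to an interior point $\mu$ stays in $\operatorname{int}(\Lambda_{N,d})$ for all $t\in(0,1]$; lifting such a path via Lemma~\ref{lift} therefore produces frame paths that are NOD at every interior time.

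I would first reduce to $N>2d$ by Naimark duality (Proposition~\ref{nodequivconnect}): when $d+3\leq N<2d$, the dual problem is $\mathcal{F}_{N,N-d}^{\mathbb{F}}$ with $N>2(N-d)$ and $N-d\geq 3$, inside the theorem's hypotheses. The residual case $N=d+2$ with $d\geq 3$ (where $d^\prime = N-d = 2$ is out of range for direct Naimark reduction) must be handled separately via a union-of-tight-subframes construction in the spirit of the $N=2d+2$ base case. The main argument then runs, for NOD frames $F,G$, as follows. First, apply Lemma~\ref{reorderODC}(ii) to each of $F,G$ to obtain continuous reorderings $F^\prime,G^\prime$ whose first $d$ vectors form a NOD basis, so that by Lemma~\ref{reorderODC}(i) the eigensteps $\lambda(F^\prime),\lambda(G^\prime)$ lie outside the image of the OD locus under $\lambda$. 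Second, use Lemma~\ref{lift} to lift linear eigensteps paths from $\lambda(F^\prime)$ and $\lambda(G^\prime)$ to a fixed interior target $\mu_0\in\operatorname{int}(\Lambda_{N,d})$, obtaining NOD paths for $t>0$ by Lemma~\ref{boundaryOD}, with NOD endpoints ensured by the previous step. Third, connect the two resulting frames inside the common fiber $\lambda^{-1}(\mu_0)$ using the fiber rotations through $U_1,V_n$ together with the chaperone-based swap and negation motions of Theorem~\ref{realconnect} that traverse the multiple components of the real orthogonal fibers.

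The main obstacle is ensuring that the chaperone swap and negation motions of the final step, as well as the reorderings of the first step, stay inside the NOD locus. These motions do not in general preserve eigensteps, so Lemma~\ref{boundaryOD} does not apply directly. I would handle this by perturbing each motion so that its eigensteps remain in $\operatorname{int}(\Lambda_{N,d})$ throughout the trajectory: NOD is open in $\mathcal{F}_{N,d}^\mathbb{F}$, the chaperone constructions are smooth one-parameter families, and when $N\geq 2d+1$ the OD locus has positive codimension in $\mathcal{F}_{N,d}^\mathbb{F}$, so any transient contact with OD can be avoided by a small generic perturbation of the path. This perturbation-plus-lift principle, combined with Lemmas~\ref{boundaryOD} and~\ref{reorderODC}, is the main new ingredient beyond the proof of Theorem~\ref{realconnect}.
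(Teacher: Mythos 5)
Your overall skeleton — reduce via Naimark duality (Proposition~\ref{nodequivconnect}), use Lemma~\ref{reorderODC} to escape bad eigenstep fibers, lift linear eigenstep paths to a common interior point via Lemma~\ref{lift}, and then propagate through the fiber motions of Theorem~\ref{realconnect} — matches the paper's high-level plan, and your use of Lemma~\ref{boundaryOD} to handle the lifted portion of the path is correct. But the central difficulty is exactly the one you flag in your final paragraph, and your proposed fix there is a genuine gap. The paper does not perturb the chaperone/negation motions; it shows directly, using Lemma~\ref{basisODC}, that every intermediate frame along those motions already contains $d$ vectors of a simplex (hence a NOD basis), so the whole frame is NOD at every time with no perturbation required. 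Your proposal instead appeals to ``OD has positive codimension, so a small generic perturbation avoids it,'' but this is not available here: the OD locus is precisely the singular locus of the variety $\mathcal{F}_{N,d}^\mathbb{F}$, perturbations must stay on the variety (not in ambient $\mathcal{M}_{d\times N}$), and near a singular point there is no transversality framework for paths in the variety that lets you wave away a crossing. You would also need the perturbed path to hit prescribed endpoints in a prescribed fiber, which a ``generic'' small perturbation does not guarantee. Without the structural observation from Lemma~\ref{basisODC}, the third stage of your argument is unsupported.

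A secondary gap: for $N=d+2$ you propose ``a union-of-tight-subframes construction in the spirit of the $N=2d+2$ base case,'' but here $N-(d+1)=1$, and a single unit vector is not a tight subframe of $\mathbb{R}^d$, so that construction does not carry over. The paper subsumes the range $d+2\leq N<2d$ entirely under the Naimark-duality step of its induction structure rather than inventing a new base case; you would need to either follow that route or supply a concrete argument for $N=d+2$, which your sketch does not.
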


\begin{proof}
The structure of this proof is similar to that of Theorem \ref{realconnect}. We proceed with the exact induction structure, but we now take care to construct paths that do not pass through OD frames. Additionally, the application of Proposition \ref{equivconnect} in Theorem \ref{realconnect} is replaced with the application of Proposition \ref{nodequivconnect}, and the case $N\geq 2d+1$ only needs the connectivity result in Theorem \ref{realconnect}.

The first observation we must make is that if there are no OD frames with eigensteps $\lambda$ or $\mu$, then the path connecting $F$ and $G$ with eigensteps $\lambda$ and $\mu$ provided in Theorem \ref{realconnect} never passes through any OD frames. The argument supporting this statement occurs in two steps. First, two frames $F$ and $G$ in $\mathcal{F}_{N,d}^\mathbb{F}$ with $N>2d$ are connected to frames $H$ and $H^\prime$ with eigensteps $\nu$ that ensure the first $d+1$ members of  $H$ and $H^\prime$ each form a a member of $\mathcal{F}_{d+1,d}^\mathbb{R}$. Since there are no OD frames with eigensteps $\nu$ (Lemma \ref{basisODC} gives this since the first $d$ vectors of a member of $\mathcal{F}_{d+1,d}^\mathbb{R}$ form a NOD basis), the path provided by Lemma \ref{lift} does not pass through eigensteps that are in the image of the OD frames under the eigensteps map. Finally, note that each frame in the path from $H$ to $H^\prime$ constructed in Theorem \ref{realconnect} always contains $d$ vectors of a a member of $\mathcal{F}_{d+1,d}^\mathbb{R}$, which is necessarily a nonorthdecomposable basis and hence Lemma \ref{basisODC} ensures that the full frame is not OD. 

The only remaining issue to deal with is that either $F$ or $G$ may have eigensteps in the image of the OD frames under the eigensteps map, so the continuous diagonalization of the $V_n$'s might cross an OD frame. Let us suppose that there is an OD frame with the same eigensteps as $F$. Since $F$ is NOD Lemma \ref{reorderODC} provides us with a permutation $\sigma$ such that $F^\prime=\{f_{\sigma(n)}\}_{n=1}^N$ has that $\Lambda(F^\prime)$ is not in the image of the OD frames under the eigensteps map. Therefore we may use the reasoning in the preceding paragraph to connect $F^\prime$ to a frame $H$ such that $\{h_n\}_{n=1}^{d+1}$ is a member of $\mathcal{F}_{d+1,d}^\mathbb{R}$, and in such a way that no frame along this path is OD. Let $\ell(t)=\{f_n^\prime(t)\}_{n=1}^N$ denote this path and note that $\{f_{\sigma^{-1}(n)}^\prime(t)\}_{n=1}^N$ is also a path through NOD frames which takes our original $F$ to a frame which contains a member of $\mathcal{F}_{d+1,d}^\mathbb{F}$. Let $H$ denote this frame.

Now, we use Lemma \ref{swapping} to rearrange $H$ so that the first $d+1$ vectors in $H$ form a member of $\mathcal{F}_{d+1,d}^\mathbb{F}$. Note that the intermediate frames in the path produced by Lemma \ref{swapping} will always contain at least $d$ vectors from a member of $\mathcal{F}_{d+1,d}^\mathbb{F}$, and hence all these intervening frames along the path are NOD.

Finally, we finish the proof by observing that our paths produced in Theorem \ref{realconnect} to connect frames containing a member of $\mathcal{F}_{d+1,d}^\mathbb{F}$ as the first $d+1$ vectors always consist of frames which have at least $d$ members of a frame from $\mathcal{F}_{d+1,d}^\mathbb{F}$. Thus, the final paths connecting $F$ and $G$ also avoid the OD frames, and the proof is complete.\end{proof}

Using Lemma \ref{genericunitaries}, it is not difficult to show that $\mathcal{F}_{2d,d}^\mathbb{C}$ is also path connected.

\begin{lemma}[Lemma~2.2 in~\cite{CCPW13}]\label{genericunitaries}
Two generic orthonormal bases are full spark.
\end{lemma}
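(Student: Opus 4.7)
The plan is to exploit the fact that the full spark condition on the concatenated frame $F=[U_1\mid U_2]$ is a finite conjunction of polynomial nonvanishing conditions, one for each choice of $d$ columns among the $2d$ available. Writing any such selection uniquely as $S=J\sqcup(d+K)$ with $J,K\subseteq [d]$ and $|J|+|K|=d$, the minor $\det([U_1]_J\mid [U_2]_K)$ is a real polynomial in the matrix entries of $(U_1,U_2)$, so its vanishing set is Zariski closed in $\mathrm{U}(d)\times\mathrm{U}(d)$ (resp.\ $\mathrm{O}(d)\times\mathrm{O}(d)$), and the full spark locus is the complement of the finite union of these $\binom{2d}{d}$ closed sets.

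Next, for each fixed $S$ I would exhibit a single pair $(U_1,U_2)$ at which the corresponding minor is nonzero; this forces each individual zero set to be a \emph{proper} subvariety rather than the whole group. To build such a witness, pick an orthonormal system $v_1,\dots,v_{|J|}$ in $\mathbb F^d$ and separately an orthonormal system $w_1,\dots,w_{|K|}$ in general linear position relative to the first block, so that $v_1,\dots,v_{|J|},w_1,\dots,w_{|K|}$ together remain linearly independent (this is possible since both blocks have size at most $d$ and generic rotations of the second block avoid the finite union of codimension-one coincidence loci with the first). Insert the $v_i$'s into the columns of $U_1$ indexed by $J$ and the $w_j$'s into the columns of $U_2$ indexed by $K$, and complete each partially defined matrix to an orthonormal basis by Gram--Schmidt. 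By construction, the $S$-minor of $[U_1\mid U_2]$ at this point is nonzero.

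With a witness available for each $S$, the conclusion follows from irreducibility: $\mathrm{U}(d)\times\mathrm{U}(d)$ is an irreducible real algebraic variety, and each connected component of $\mathrm{O}(d)\times\mathrm{O}(d)$ is a coset of $\mathrm{SO}(d)\times\mathrm{SO}(d)$ (which is likewise irreducible). On each such irreducible component the zero set of every $S$-minor is therefore a proper closed subvariety, hence nowhere dense in the standard topology, and the full spark locus is the complement of a finite union of nowhere dense closed subsets, which is open and dense. The main obstacle I foresee is the real orthogonal case, specifically placing the witness pair in a prescribed component of $\mathrm{O}(d)\times\mathrm{O}(d)$ rather than only in $\mathrm{SO}(d)\times\mathrm{SO}(d)$. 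This is handled by flipping the sign of a single completion column, which preserves orthonormality and the linear independence of the selected columns but toggles the determinant of either factor, so every component can be reached.
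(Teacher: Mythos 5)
The paper does not supply its own proof of this lemma; it imports it verbatim from Lemma~2.2 of \cite{CCPW13}, so there is no in-paper argument to compare against. Your self-contained argument is essentially correct and follows the natural route: full spark of $[U_1\mid U_2]$ is the simultaneous nonvanishing of the $\binom{2d}{d}$ maximal-minor polynomials, each of which you show has a witness pair where it does not vanish, and the full spark locus is then the complement of a finite union of proper Zariski-closed subsets of the ambient matrix group. The one step you should make explicit is the passage from ``proper Zariski-closed subset'' to ``nowhere dense in the standard topology.'' This implication is \emph{false} for general irreducible real algebraic varieties --- the paper's own Figure~1(b) (an irreducible curve with an isolated singular point) is precisely a counterexample. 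It \emph{does} hold in your situation because $\mathrm{U}(d)$, $\mathrm{SO}(d)$, and the nontrivial coset of $\mathrm{SO}(d)$ are nonsingular connected real algebraic varieties, so Proposition~\ref{realgeneric} (or a direct real-analytic continuation argument on the smooth compact group) applies. With that detail acknowledged, and together with your sign-flip device for placing a witness in each of the four components of $\mathrm{O}(d)\times\mathrm{O}(d)$, the proof is sound.
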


\begin{theorem}
The NOD members of $\mathcal{F}^{\mathbb{C}}_{2d,d}$ are path-connected.
\end{theorem}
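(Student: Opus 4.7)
The plan mimics the proof structure of Theorem \ref{non-orthconnect}, but uses unions of two orthonormal bases as the reference configuration in place of frames containing a simplex; the latter target is infeasible when $N=2d$ since the remaining $d-1$ vectors cannot form a tight frame for $\mathbb{C}^d$. Fix the boundary eigensteps $\mu \in \Lambda_{2d,d}$ determined by $\mu_d = (1,\ldots,1)$ and $\mu_{2d} = (2,\ldots,2)$; the fiber $\Phi_\mu := \lambda^{-1}(\mu)$ consists precisely of frames of the form $[U\mid V]$ with $U, V \in U(d)$, so $\Phi_\mu$ is naturally identified with $U(d)\times U(d)$. By Lemma \ref{genericunitaries}, the full-spark locus is dense in $U(d)\times U(d)$, and since full spark with $N>d$ forces NOD (otherwise an OD split would concentrate more than $k$ frame vectors in some $k$-dimensional subspace for $k<d$), the NOD locus of $\Phi_\mu$ contains this dense subset. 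Its complement is cut out by the conditions that $U^{\ast}V$ be, after permuting rows and columns, block-diagonal of shape $k\oplus(d-k)$ for some $1 \leq k \leq d-1$; each stratum is a complex-algebraic subvariety of real codimension $2k(d-k)\geq 2$ inside the real manifold $U(d)\times U(d)$ of dimension $2d^{2}\geq 8$. Hence the NOD locus of $\Phi_\mu$ is path-connected.

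Given a NOD $F \in \mathcal{F}_{2d,d}^{\mathbb{C}}$, the second step is to build a path of NOD frames from $F$ into $\Phi_\mu$. If $\lambda(F)$ lies outside the image of the OD frames under $\lambda$, then $\Phi_{\lambda(F)}$ is entirely NOD, and I connect $F$ to $\Phi_\mu$ by routing through some interior eigensteps $\nu \in \mathrm{int}(\Lambda_{2d,d})$: apply Lemma \ref{lift} from $\nu$ to $\lambda(F)$ to produce some $F_1 \in \Phi_{\lambda(F)}$ together with a path whose intermediate eigensteps are interior (NOD by Lemma \ref{boundaryOD}); connect $F_1$ to $F$ inside the entirely-NOD fiber $\Phi_{\lambda(F)}$; and apply Lemma \ref{lift} from $\nu$ to $\mu$ to produce a similar NOD path into $\Phi_\mu$. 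If $\lambda(F)$ does lie in the OD image, first use Lemma \ref{reorderODC}(ii) to produce a permutation $\sigma$ with $\lambda(F^{\sigma})$ outside the OD image, carry out the preceding construction for $F^{\sigma}$, and apply $\sigma^{-1}$ to the whole constructed path; this still yields a path of NOD frames since NOD is invariant under reindexing of the frame vectors. If the endpoint in $\Phi_\mu$ happens to be OD, move it continuously to a NOD point through the NOD locus of $\Phi_\mu$ established in the first paragraph.

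To conclude, I must connect any two such NOD endpoints in $\Phi_\mu$, which may correspond to different orderings of which indices belong to which ONB. This is handled by the chaperone technique from the $N=2d$ case of Theorem \ref{realconnect}: an inter-ONB transposition is carried out by aligning the two vectors via simultaneous continuous rotations of each ONB (valid by Lemma \ref{spinning}, since each ONB is tight for $\mathbb{C}^d$) followed by a swap rotation in the aligned $2$-plane, while an intra-ONB transposition uses a chaperone vector from the other ONB to mediate via three inter-ONB transpositions. Throughout these motions the frame remains a union of two orthonormal bases, and by perturbing slightly where needed we stay in the NOD locus of $\Phi_\mu$. The main obstacle in the whole plan is that Lemma \ref{lift} gives no control over whether its endpoint in $\Phi_\mu$ is NOD; that difficulty is overcome entirely by the codimension-$\geq 2$ analysis of the OD strata in $U(d)\times U(d)$, which guarantees path-connectivity of the NOD locus of $\Phi_\mu$ and hence the freedom to slide an OD endpoint over to a NOD one.
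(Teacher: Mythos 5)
Your strategy is genuinely different from the paper's: you route everything into the specific boundary eigensteps $\mu$ whose fiber $\Phi_\mu$ is the set of unions of two ONBs, and then try to work inside $\Phi_\mu$ via a codimension count. The paper instead routes to a target frame $H$ whose eigensteps lie \emph{outside} the image of the OD frames under $\lambda$, namely a reindexed union of full-spark ONBs chosen via Lemma~\ref{genericunitaries} so that its first $d$ vectors form a NOD basis, \emph{and} so that the same property survives undoing the reindexing permutation $\sigma$. That choice makes every frame in the relevant endpoint fiber NOD automatically, so that fibers and lifted paths never need to be inspected for OD members; in the complex case one then finishes purely with connectivity of unitary groups and never touches a chaperone argument.

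Your version has two genuine gaps, both located exactly where your plan needs the most care. First, you correctly flag that Lemma~\ref{lift} gives no control over whether its endpoint $H\in\Phi_\mu$ is NOD, but the proposed remedy, ``the freedom to slide an OD endpoint over to a NOD one,'' is not coherent: path-connectivity of the NOD locus of $\Phi_\mu$ lets you join two NOD points through NOD frames, but if the lifted path terminates at an OD frame $H$ then any continuation emanating from $H$ still contains the OD frame $H$ itself, and the composite path $F\to\cdots\to H\to\cdots$ passes through it. Second, the chaperone step is fundamentally obstructed in $\Phi_\mu$. Write $\Phi_\mu^S$ for the set of unions of two ONBs with index assignment $S\sqcup S^c=[2d]$; each $\Phi_\mu^S$ is a copy of $U(d)\times U(d)$, and for $S'\notin\{S,S^c\}$ every point of $\Phi_\mu^S\cap\Phi_\mu^{S'}$ is a frame in which the two ONBs share a vector, hence is OD. So the NOD loci of distinct $\Phi_\mu^S$ are pairwise disjoint, and after you undo $\sigma$ your endpoint lands in $\Phi_\mu^{\sigma^{-1}([d])}$, generally a different component than $\Phi_\mu^{[d]}$. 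Perturbing slightly while remaining a union of two ONBs cannot cross that seam (the seam \emph{is} the OD locus), and perturbing off the union $\bigcup_S\Phi_\mu^S$ contradicts your own assertion that ``the frame remains a union of two orthonormal bases.'' Perturbing entirely within $\mathcal{F}_{2d,d}^\mathbb{C}$ near an OD frame is also not available by transversality, since OD frames are exactly the singular points of the variety, whose local structure the paper explicitly leaves as an open problem. Your codimension-$2k(d-k)$ count inside a single $U(d)\times U(d)$ is correct as far as it goes, but it cannot be leveraged to cross between different index assignments; a device such as the paper's carefully reindexed full-spark target is still needed.
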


\begin{proof}
As in the proof of Theorem \ref{non-orthconnect} if both $F$ and $G$ have eigensteps which are not in the image of the OD frames, then the paths constructed in Section \ref{sec:connect} will not pass through OD frames. Without loss of generality, we can now assume that there is an OD frame that has the same eigensteps as $F$. Since $F$ is not OD, Lemma \ref{reorderODC} provides us with a permutation $\sigma$ such that $F^\prime=\{f_{\sigma(n)}\}_{n=1}^N$ has that $\Lambda(F^\prime)$ is not in the image of the OD frames under the eigensteps map and note that this permutation satisfies $\sigma(1)=1$. By Lemma \ref{genericunitaries}, we can choose two orthonormal bases, say $\{e_i\}_{i=1}^d$ and $\{u_i\}_{i=1}^d$ such that $\{e_i\}_{i=1}^d\cup\{u_i\}_{i=1}^d$ is full spark. Reorder these vectors into a frame $H=\{h_n\}_{n=1}^{2d}$ so that $h_1=e_1,h_2=u_1$, and $h_{\sigma^{-1}(2)}=u_2$. Observe that $\{h_n\}_{n=1}^d$ is a NOD basis so $\Lambda(H)$ is not in the image of the OD frames by Lemma \ref{reorderODC}. Now we can connect $F'$ to $H$ so that the path does not go through any OD frames. Next apply $\sigma^{-1}$ to this path to get a path from $F$ to $\sigma^{-1}(H)$ which does not pass through any OD frame. Note that $\Lambda(\sigma^{-1}(H))$ is not in the image of the OD frames since $h_{\sigma(1)}=e_1$ and $h_{\sigma(2)}=u_2$ and $H$ is full spark so $\{h_{\sigma(n)}\}_{n=1}^d$ is a NOD basis. We have shown that any NOD frame in $\mathcal{F}^{\mathbb{C}}_{2d,d}$ can be connected to a frame whose eigensteps are not in the image of the OD frames. It follows that we can connect any two NOD frames in this set without passing through an OD frame.\end{proof}

The above result does not apply to the real case when $N=2d$. In the process of connecting our $F^\prime$ to $H$, in the real case we may only ensure that we can connect $F^\prime$ to a frame sharing the same eigensteps as $H$. Additionally, the path constructed in Theorem \ref{realconnect} for the case $N=2d$ requires that we align frame vectors directly on top of each other. Once this alignment occurs, we have reached an OD frame. This makes the proof for $\mathcal{F}_{2d,d}^\mathbb{R}$ much more involved.

In order to show that we can connect two NOD frames in $\mathcal{F}_{2d,d}^\mathbb{R}$ without passing through an OD frame, we shall still use the unions of two orthonormal bases as the central nexus of our paths. As in Theorem \ref{realconnect}, once we get to a single union of two orthonormal bases which constitutes a NOD frame (a nontrivial task), we just need to show that we can permute the vectors via continuous paths while remaining NOD. The next lemma demonstrates that continuous permutations can be performed for a particular NOD frame consisting of the union of two orthonormal bases.

\begin{lemma}\label{twoNpermute}
Suppose $d\geq 3$, $N=2d$, and fix $F\in\mathcal{F}_{d-2,d-3}^\mathbb{R}$ (in the case $d=3$, this is vacuous). Then the following are true:
\begin{itemize}
\item[(i)] There is a $\xi\in\{-1,1\}^{d-2}$ such that
\[
U=\left(\begin{array}{rrr}
        \frac{\sqrt{2}}{2} & \frac{\sqrt{2}}{2} & {\bf 0}_{1,d-2}\\
        \frac{\sqrt{2}}{2} & -\frac{\sqrt{2}}{2}& {\bf 0}_{1,d-2} \\
        0 & 0 &\frac{1}{\sqrt{d-2}} \xi\\
        {\bf 0}_{d-3,1} & {\bf 0}_{d-3,1} & \sqrt{\frac{d-3}{d-2}}F
        \end{array}\right),\: V=\left(\begin{array}{rrr}
        \frac{\sqrt{2}}{2} & \frac{\sqrt{2}}{2} & {\bf 0}_{1,d-2}\\
        0 & 0 & -\frac{1}{\sqrt{d-2}} \xi \\
        \frac{\sqrt{2}}{2} & -\frac{\sqrt{2}}{2} & {\bf 0}_{1,d-2}\\
        {\bf 0}_{d-3,1} & {\bf 0}_{d-3,1} & \sqrt{\frac{d-3}{d-2}}F
        \end{array}\right)
\]
are both positively oriented orthonormal bases and
\[
F_\ast = (u_1\: v_1\: u_2\: v_2\: v_3 \cdots v_d\: u_3 \cdots u_d)
\]
is not OD, where $u_i$ and $v_i$ are the $i$th columns of $U$ and $V$ respectively.

\item[(ii)] There is a continuous path through the NOD members of $\mathcal{F}_{2d,d}^\mathbb{R}$ which connects $F_\ast$ to
\[
G_\ast = (v_1\: u_1\: u_2 \: v_2\: v_3 \cdots v_d\: u_3 \cdots u_d).
\]
\end{itemize}
\end{lemma}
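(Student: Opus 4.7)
For part~(i), the verification is direct calculation. Each column of $U$ has unit norm: column $2+i$ has squared norm $\frac{1}{d-2}+\frac{d-3}{d-2}\|f_i\|^2=1$, using $\|f_i\|=1$. Orthogonality of columns $2+i$ and $2+j$ of $U$ (for $i\neq j$) reduces to $\xi_i\xi_j/(d-2)+\frac{d-3}{d-2}\langle f_i,f_j\rangle=0$. Because $F\in\mathcal{F}_{d-2,d-3}^{\mathbb{R}}$ has one more vector than the ambient dimension, it is necessarily a simplex, so $\langle f_i,f_j\rangle=-1/(d-3)$ for $i\neq j$, forcing $\xi_i\xi_j=1$ and hence $\xi=\pm(1,\ldots,1)$. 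The same computation works for $V$; noting that $V$ arises from $U$ by swapping rows $2$ and $3$ and then negating the new row~$2$, one has $\det V=\det U$, so a single sign choice for $\xi$ makes both $U$ and $V$ positively oriented. For NOD-ness of $F_*$, I apply Proposition~\ref{corrnet}: the correlation graph $\gamma(F_*)$ contains a $K_{2,2}$ on $\{u_1,u_2\}\times\{v_1,v_2\}$ with weights $\pm\tfrac{1}{2}$; a $K_{d-2,d-2}$ on $\{u_{2+i}\}\times\{v_{2+j}\}$ via the nonzero $\langle u_{2+i},v_{2+j}\rangle=\frac{d-3}{d-2}\langle f_i,f_j\rangle$; and these two clusters are bridged by the nonzero products $\langle u_a,v_{2+i}\rangle=\pm\xi_i/\sqrt{2(d-2)}$ for $a\in\{1,2\}$ together with the symmetric bridges from $v_1,v_2$ to $\{u_{2+i}\}$.

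For part~(ii), the main structural obstacle is a rigidity that rules out naive alignment strategies. If $R\in SO(d)$ satisfies $Ru_1=v_1$ and Lemma~\ref{spinning} is applied to the tight frame $U$, then $\langle v_1,Ru_i\rangle=\langle u_1,u_i\rangle=0$ for $i\geq 2$ and $\langle v_1,v_j\rangle=0$ for $j\geq 2$, so in the rotated frame position~$1$ is correlated only with position~$2$ and the frame is OD. The symmetric obstruction rules out any global rotation of $V$. Moreover, no linear transformation of $\mathbb{R}^d$ swaps $u_1$ and $v_1$ while fixing every other $u_i$ and $v_j$, since those vectors span $u_1^{\perp}+v_1^{\perp}=\mathbb{R}^d$ and would force the identity. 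Consequently, the path has to be built from several localized applications of Lemma~\ref{spinning} that, in concert, effect a genuine permutation of the frame.

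My plan for part~(ii) is a three-stage choreography. First, rotate the orthonormal pair $\{u_1,u_2\}$ by an angle $\alpha(t)$ inside $\text{span}\{e_1,e_2\}$ via Lemma~\ref{spinning}; simultaneously rotate $\{v_1,v_2\}$ by $\beta(t)$ inside $\text{span}\{e_1,e_3\}$; and simultaneously apply a partial rotation by angle $\gamma(t)\in[0,\pi/2]$ in $\text{span}\{e_2,e_3\}$ to the entire orthonormal basis $U$, viewed as a tight frame for $\mathbb{R}^d$. The key structural observation supporting this choreography is that the $\pi/2$-rotation $R$ in $\text{span}\{e_2,e_3\}$ satisfies $Ru_k=v_k$ for every $k\in[d]$ --- so that $V=RU$ as matrices --- and partial $\gamma$'s provide a continuous interpolation between the two bases. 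By choosing the triple $(\alpha(t),\beta(t),\gamma(t))$ along a carefully tuned loop in angle space, the net effect at $t=1$ is the swap of vectors at positions~$1$ and $2$ while all other positions return to their original values. The main obstacle is maintaining NOD at every $t$: the correlations controlling $\gamma(F_*(t))$ are trigonometric functions of $(\alpha,\beta,\gamma)$ whose joint zero loci form a finite union of smooth hypersurfaces in the $3$-torus, and a general-position argument lets one perturb the schedule to avoid all of them. The dense lower-block $K_{d-2,d-2}$ subgraph persists robustly under the chosen motions (its internal correlations depend essentially on $\gamma$ alone and stay bounded away from zero away from finitely many critical angles), so the correlation graph remains connected even when one or two individual bridge edges momentarily vanish --- this is precisely why the simplex structure of $F$ built into $F_*$ in part~(i) is essential.
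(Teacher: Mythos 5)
Your treatment of part (i) contains a substantive error. You assert that $F\in\mathcal{F}_{d-2,d-3}^{\mathbb{R}}$ ``is necessarily a simplex, so $\langle f_i,f_j\rangle=-1/(d-3)$ for $i\neq j$, forcing $\xi_i\xi_j=1$ and hence $\xi=\pm(1,\ldots,1)$.'' This is false: $\mathcal{F}_{d-2,d-3}^{\mathbb{R}}$ has $2^{d-3}$ connected components and its members are obtained from the equiangular simplex by arbitrary sign flips, so the pairwise inner products $\langle f_i,f_j\rangle$ need not all be $-1/(d-3)$. As a concrete test, take $d=4$, $F=(1,1)\in\mathcal{F}_{2,1}^{\mathbb{R}}$: with your $\xi=\pm(1,1)$ the columns $u_3,u_4$ of $U$ satisfy $\langle u_3,u_4\rangle=\tfrac{1}{2}\cdot 1+\tfrac{1}{2}\cdot 1=1\neq 0$, so $U$ fails to be orthonormal. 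The correct choice of $\xi$ --- and the one the paper uses --- is the Naimark complement of the Parseval rescaling $\sqrt{\tfrac{d-3}{d-2}}F$, i.e., $\xi$ satisfies $\tfrac{d-3}{d-2}\langle f_i,f_j\rangle+\tfrac{1}{d-2}\xi_i\xi_j=\delta_{ij}$; this $\xi$ depends on $F$ and is determined up to a global sign, which is what gives you the freedom to fix the orientation. Your determinant observation ($\det V=\det U$ via a row swap and a row negation) is correct and is a nice shortcut for checking that one sign choice handles both $U$ and $V$, but it does not supply the existence of a valid $\xi$. Your verification that $F_\ast$ is NOD via the correlation network is fine (and in fact the paper uses the simpler observation that $u_1$ correlates with every $v_i$).

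For part (ii) what you have written is a plan, not a proof. The genuinely useful structural observations you make --- that naively rotating all of $U$ onto $V$ ends at an OD frame, that no single linear map effects the swap while fixing everything else, and that the $\pi/2$ rotation in $\operatorname{span}\{e_2,e_3\}$ sends $U$ to $V$ --- all appear implicitly in the paper's construction, but they do not themselves produce a path. Your proposal of a loop $(\alpha(t),\beta(t),\gamma(t))$ in a $3$-torus combined with a ``general-position argument'' is not carried out: you do not exhibit a schedule that actually ends at $G_\ast$ (not merely at some frame with positions $1$ and $2$ exchanged), and you do not verify NOD-ness along it. A general-position hand-wave is insufficient here because avoiding a finite union of hypersurfaces only gives a dense open complement; it does not automatically give a path in that complement with your prescribed endpoints, and in fact that is exactly where the subtlety lies (the naive straight-line schedule does hit the OD set, as your own obstruction notes). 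The paper's proof is by contrast fully explicit: it first rotates $F_\ast^2=\{u_i\}_{i=3}^d\cup\{v_i\}_{i=3}^d$ by $\pi/4$ in $\operatorname{span}\{e_2,e_3\}$ (precisely to keep the correlation network connected during the subsequent local moves), then performs a $\pi/2$ rotation of the first four vectors, then realizes a $3$-cycle on three of them by three applications of Lemma~\ref{spinning}, and then undoes the initial rotation; at each stage it identifies a persistent vector that correlates the two clusters. You would need to write your path down at that level of detail and check the correlation network at every stage for your argument to count as a proof.
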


\begin{proof}
First, we let $\xi$ be a member of $\mathcal{F}_{d-2,1}^\mathbb{R}$ which correponds to the Naimark complement of $F$. Now, $\xi$ is unique up to a global sign factor, so we choose the sign so that 
\[
\left(\begin{array}{r}
        \frac{1}{\sqrt{d-2}}\xi\\
        \sqrt{\frac{d-3}{d-2}}F
        \end{array}\right)
\]
is a negatively oriented orthonormal basis. Thus the resulting $U$ and $V$ are positively oriented orthonormal bases. Now, note that $u_1$ has a nonzero inner product with each member of $v_i$ and hence $F_\ast$ is not OD. 

We now describe the continous path connecting $F_\ast$ to $G_\ast$. First, we note that the frame operator of the collection $F_\ast^1=\{u_1,v_1,u_2,v_2\}$ is $\text{diag}(2,1,1,0,\ldots,0)$ and thus the frame operator of $F_\ast^2=\{u_i\}_{i=3}^d\cup\{v_i\}_{i=3}^d$ is $\text{diag}(0,1,1,2,\ldots,2)$. Consequently, we may independently rotate the members of both $F_\ast^1$ and $F_\ast^2$ in the plane spanned by the standard orthonormal vectors $e_2$ and $e_3$, and the resulting frame is still always in $\mathcal{F}_{2d,d}^\mathbb{R}$ and is NOD. Our first action is to continuously rotate $F_\ast^2$ in the span of $e_2$ and $e_3$ to arrive at the frame
\[
\left(\begin{array}{rrrrrr}
        \frac{\sqrt{2}}{2} & \frac{\sqrt{2}}{2} & \frac{\sqrt{2}}{2} & \frac{\sqrt{2}}{2} & {\bf 0}_{1,d-2} & {\bf 0}_{1,d-2}\\
        \frac{\sqrt{2}}{2} &  0 & -\frac{\sqrt{2}}{2}& 0 &\frac{1}{\sqrt{2(d-2)}} \xi & -\frac{1}{\sqrt{2(d-2)}} \xi \\
        0 & \frac{\sqrt{2}}{2} & 0 & -\frac{\sqrt{2}}{2} &\frac{1}{\sqrt{2(d-2)}} \xi & \frac{1}{\sqrt{2(d-2)}} \xi\\
        {\bf 0}_{d-3,1} & {\bf 0}_{d-3,1} & {\bf 0}_{d-3,1} & {\bf 0}_{d-3,1} & \sqrt{\frac{d-3}{d-2}}F & \sqrt{\frac{d-3}{d-2}}F
        \end{array}\right).
\]
The reason for doing this is to avoid any OD frames during a motion that will only involve the first four vectors. At the end of the motion involving only the first four vectors, we shall undo this rotation.

We now restrict our attention to the frame
\[
\left(\begin{array}{rrrr}
        \frac{\sqrt{2}}{2} & \frac{\sqrt{2}}{2} & \frac{\sqrt{2}}{2} & \frac{\sqrt{2}}{2} \\
        \frac{\sqrt{2}}{2} &  0 & -\frac{\sqrt{2}}{2}& 0 \\
        0 & \frac{\sqrt{2}}{2} & 0 & -\frac{\sqrt{2}}{2}
        \end{array}\right).
\]
\noindent By a continuous rotation of $\pi/2$ radians in the plane spanned by $e_2$ and $e_3$, we may move to the frame
\[
\left(\begin{array}{rrrr}
        \frac{\sqrt{2}}{2} & \frac{\sqrt{2}}{2} & \frac{\sqrt{2}}{2} & \frac{\sqrt{2}}{2} \\
        0 & -\frac{\sqrt{2}}{2} &  0 & \frac{\sqrt{2}}{2} \\
        \frac{\sqrt{2}}{2}& 0 & -\frac{\sqrt{2}}{2} & 0 
        \end{array}\right).
\]
This has permuted the columns by a 4-cycle. We now produce a $3$-cycle on the last three columns. First, we may continuously rotate the orthonormal pair consisting of the second and fourth columns by Lemma \ref{spinning} until we arrive at the frame
\[
\left(\begin{array}{rrrr}
        \frac{\sqrt{2}}{2} & 1 & \frac{\sqrt{2}}{2} & 0 \\
        0 & 0 &  0 & 1 \\
        \frac{\sqrt{2}}{2}& 0 & -\frac{\sqrt{2}}{2} & 0 
        \end{array}\right).
\]
Now, the third and fourth columns are orthonormal and we use Lemma \ref{spinning} again to continuously rotate to
\[
\left(\begin{array}{rrrr}
        \frac{\sqrt{2}}{2} & 1 & 0 &\frac{\sqrt{2}}{2} \\
        0 & 0 & -1 &0  \\
        \frac{\sqrt{2}}{2}& 0 & 0 &-\frac{\sqrt{2}}{2}  
        \end{array}\right).
\]
One final application of Lemma \ref{spinning} on the second and third columns yields
\[
\left(\begin{array}{rrrr}
        \frac{\sqrt{2}}{2} & \frac{\sqrt{2}}{2} & \frac{\sqrt{2}}{2} &\frac{\sqrt{2}}{2} \\
        0 & \frac{\sqrt{2}}{2} & -\frac{\sqrt{2}}{2} &0  \\
        \frac{\sqrt{2}}{2}& 0 & 0 &-\frac{\sqrt{2}}{2}  
        \end{array}\right).
\]
Clearly, this path lifts to a continuous path taking the collection $\{u_1,v_1,u_2,v_2\}$ to the collection $\{v_1,u_1,u_2,v_2\}$ such that all intervening frames have unit-norm members and frame operator $\text{diag}(2,1,1,0,\ldots,0)$.

In this paragraph, we justify why this path never meets an OD frame. First, we note that the projection of $F_\ast^1$ into the span of $\{e_2,e_3\}$ contains an orthogonal pair and the inner product of any vector in the rotation of $F_\ast^2$ with any vector in $F_\ast^1$ is equal to the inner products of the vectors if they are first projected into the span of $\{e_2,e_3\}$. Now, simply rotating in the span of $\{e_2,e_3\}$ by some $Q$, it is easy to see that $QF_\ast^1$ still contains an orthogonal pair and thus the rotated vectors from $F_\ast^2$ shall have nonzero inner product with at least one vector from this pair. Using this fact and the fact that $QF_\ast^1$ always has a connected correlation network, we shall have that the entire frame has a connected correlation network and is hence NOD. Furthermore, note that $v_1$ persists as a vector throughout the remaining operations. Since $v_1$ has nonzero correlation with all the rotations of the vectors in $F_\ast^2$ and $e_1$, $v_1$ together with the rotations of the vectors in $F_\ast^2$ has a connected correlation network and forms a frame for $\mathbb{R}^d$. Consequently, the full frame is never OD throughout the continuous motions described above. 

Finally, we undo the starting rotation on $F_\ast^2$ to arrive at $G_\ast$.\end{proof}

\begin{theorem}
The set of NOD frames in $\mathcal{F}_{2d,d}^\mathbb{R}$ is path-connected for $d\geq 3$.
\end{theorem}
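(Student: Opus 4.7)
The plan mirrors the structure of Theorem \ref{non-orthconnect}: show that every NOD frame $F \in \mathcal{F}_{2d,d}^\mathbb{R}$ is NOD-path-connected to the explicit frame $F_\ast$ provided by Lemma \ref{twoNpermute}, so that concatenating such paths produces a NOD path between any two NOD frames. Here $F_\ast$ plays the role of the canonical ``rendezvous'' frame that the simplex-containing target played when $N > 2d$; the latter is not available in the $N = 2d$ regime, and this motivates the detailed construction of Lemma \ref{twoNpermute}.

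For the lifting step, I first observe that the first $d$ columns of $F_\ast$, namely $(u_1, v_1, u_2, v_2, v_3, \ldots, v_{d-2})$, form a NOD basis: the explicit formulas in Lemma \ref{twoNpermute}(i) show that both $u_1$ and $u_2$ have nonzero inner product with each $v_j$ appearing in this list, so the correlation graph restricted to the first $d$ columns contains $K_{2,d-2}$ as a connected spanning subgraph, and by Proposition \ref{corrnet} the basis is NOD. Lemma \ref{reorderODC}(i) then gives that $\mu := \lambda(F_\ast)$ is not in the image of the OD frames under the eigensteps map. Given a NOD frame $F$, apply Lemma \ref{reorderODC}(ii) to reorder $F$ into some $F'$ whose first $d$ columns form a NOD basis, so that $\lambda(F')$ is likewise not in the image of the OD frames. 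Lifting the straight-line eigensteps path from $\lambda(F')$ to $\mu$ via Lemma \ref{lift} yields a continuous path in $\mathcal{F}_{2d,d}^\mathbb{R}$; its interior lies in $\operatorname{int}(\Lambda_{2d,d})$ and so is NOD by Lemma \ref{boundaryOD}, while both endpoints lie outside the OD image by construction. Undoing the reordering along the lifted path, as in the proof of Theorem \ref{non-orthconnect}, produces a NOD path from $F$ to a frame $H$ which is a union of two orthonormal bases $\widetilde U, \widetilde V$ woven in the same order as $F_\ast$.

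The crux of the proof is to now deform $H$ to $F_\ast$ through NOD frames. Both frames lie in the fiber $\lambda^{-1}(\mu)$, which by Theorem 7 of \cite{CFMPS12} is parametrized by pairs $(U_1, \{V_n\})$ with $U_1 \in \mathrm{O}(d)$ and each $V_n$ a block-diagonal orthogonal matrix. Within each connected component of this parameter space one can continuously deform the parameters while keeping the frame NOD, since Lemma \ref{spinning} produces explicit smooth one-parameter families of valid deformations and the OD condition cuts out a positive-codimension real algebraic subset of the parameter space, so generic perturbations of the path avoid it. The main obstacle is hopping between the connected components of this fiber, which encode orientation choices for each block and which cannot be traversed by staying inside $\lambda^{-1}(\mu)$ alone. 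This is exactly where Lemma \ref{twoNpermute}(ii) is used: the explicit NOD path from $F_\ast$ to $G_\ast$ realizes a transposition in the weaving that, once combined with the in-basis permutations and rotations furnished by Lemma \ref{spinning} (which can relocate the atomic swap to other pairs of positions), generates every sign flip needed to connect any two components of the fiber while remaining in the NOD locus. Verifying that the single atomic swap of Lemma \ref{twoNpermute}(ii), together with Lemma \ref{spinning}-induced rotations, generates the full component group of the fiber is the technical heart of the argument; once established, every NOD union of two orthonormal bases woven as in $F_\ast$ is NOD-connected to $F_\ast$, and concatenation with the lifting step from the previous paragraph yields the theorem.
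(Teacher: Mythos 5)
Your proposal breaks down in the second paragraph. You claim that the first $d$ columns of $F_\ast$, namely $(u_1, v_1, u_2, v_2, v_3, \ldots, v_{d-2})$, form a NOD basis. They are not even linearly independent: from the explicit formulas in Lemma \ref{twoNpermute}(i) one reads off $u_1 + u_2 = \sqrt{2}\, e_1 = v_1 + v_2$, so $u_1 + u_2 - v_1 - v_2 = 0$. More structurally, $\{u_1, u_2, v_1, v_2\}$ span only the three-dimensional space $\text{span}\{e_1, e_2, e_3\}$, and the remaining $d-4$ vectors $v_3, \ldots, v_{d-2}$ lie in $\text{span}\{e_2, e_4, \ldots, e_d\}$, so the total span has dimension at most $d-1$. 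Since these $d$ columns do not form a basis, Lemma \ref{reorderODC}(i) does not apply, and you have no grounds to conclude that $\lambda(F_\ast)$ lies outside the image of the OD frames under the eigensteps map. The lifting argument therefore cannot be trusted to avoid OD frames, and the central ``rendezvous'' plan is unsupported.

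There are two further gaps. First, in the deformation step inside the fiber $\lambda^{-1}(\mu)$, you assert that generic perturbations avoid the OD locus because it has positive codimension; but stating that a closed subset has positive codimension does not produce an explicit path avoiding it (especially since the OD locus can separate components of the fiber), and you give no argument that the $(U_1, \{V_n\})$-parametrization admits a path that simultaneously stays in the NOD set and interpolates the two endpoints you care about. Second, you identify ``verifying that the single atomic swap of Lemma \ref{twoNpermute}(ii), together with Lemma \ref{spinning}-induced rotations, generates the full component group of the fiber'' as the technical heart, but you do not carry it out — you only say ``once established.'' That generation argument is precisely where the difficulty lives, and the paper's proof handles it with a three-part plan (lift to a simplex-in-a-hyperplane frame, an explicit pull-apart motion to a union of two orthonormal bases via the families $U(t), V(t)$, and a careful fixed-partition connectivity argument followed by transpositions via Lemma \ref{twoNpermute}), none of which appears in your outline.
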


\begin{proof}
The proof occurs in three parts. First, we show that any NOD frame connects to a frame in $\mathcal{F}_{2d,d}^\mathbb{R}$ containing a member of $\mathcal{F}_{d,d-1}^\mathbb{R}$ in a $(d-1)$-dimensional subspace of $\mathbb{R}^d$, which is necesarily NOD. In the next step, we show that there is a relatively simple motion that takes any frame in $\mathcal{F}_{2d,d}^\mathbb{R}$ containing a member of $\mathcal{F}_{d,d-1}^\mathbb{R}$ in a $(d-1)$-dimensional subspace to a NOD union of two orthonormal bases. Finally, we show that the NOD unions of two orthonormal bases form a path-connected set.

Suppose $F\in\mathcal{F}_{2d,d}^\mathbb{R}$ is NOD. Let $\mu$ denote the eigensteps of a frame in $\mathcal{F}_{2d,d}^\mathbb{R}$ such that the first $d$ vectors of the frame form a member of $\mathcal{F}_{d,d-1}^\mathbb{R}$ in a $(d-1)$-dimensional subspace of $\mathbb{R}^d$. Using the same permutation argument as in Theorem \ref{realconnect}, we may essentially employ Lemma \ref{lift} to connect $F$ to a frame containing a member of $\mathcal{F}_{d,d-1}^\mathbb{R}$ in $(d-1)$-dimensional subspace of $\mathbb{R}^d$ without crossing an OD frame. 

Now, let $\{u_1,\ldots,u_d\}$ denote the frame vectors forming the member of $\mathcal{F}_{d,d-1}^\mathbb{R}$ in a $(d-1)$-dimensional subspace and let $\{v_1,\ldots,v_d\}$ denote the remaining vectors in the frame. By continuous rotation, we may assume (without loss of generality) that $\text{span}\{u_1,\ldots u_d\}=\text{span}\{e_2,\ldots e_d\}$. Thus, the frame operator of $\{v_1,\ldots,v_d\}$ is 
\[
\text{diag}\left(2,\frac{d-2}{d-1},\ldots,\frac{d-2}{d-1}\right),
\]
and thus there is a $\xi\in\{1,-1\}^d$ and an $H\in\mathcal{F}_{d,d-1}^\mathbb{R}$ such that the coordinate representation of $\{v_1,\ldots,v_d\}$ is given by
\[
\left(\begin{array}{c}
        \sqrt{\frac{2}{d}}\xi\\
        \sqrt{\frac{d-2}{d}}H
        \end{array}\right).  
\]
Let us identify $\{u_1,\ldots,u_d\}$ with $H^\prime\in\mathcal{F}_{d,d-1}^\mathbb{R}$, and let $\zeta\in\{-1,1\}^d$ be a Naimark complement of $H^\prime$. Thus, the coordinate representation of $\{u_1,\ldots,u_d\}$ is given by
\[
\left(\begin{array}{c}
        0\cdot \zeta\\
        H^\prime
        \end{array}\right).  
\]
when viewed as a member of $\mathcal{F}_{d+1,d}^\mathbb{R}$ in $\text{span}\{e_2,\ldots,e_d\}$. By continuous rotation, we may assume that 
\[
\frac{\zeta_1\xi_i}{d}+\frac{d-1}{d}\langle h_1^\prime, h_i\rangle \not = 0
\]
where $h_1^\prime$ is the first vector of $H^\prime$ and $h_i$ is $i$th vector of $H$ for all $i=1,\ldots, d$. This is because $h_1^\prime$ may be rotated to any point on the unit sphere in $(d-1)$ dimensions, and the intersection of the sphere with the set-theoretic complement of any finite number of hyperplanes is always dense in the sphere. We now use the following path:
\[
V(t)=\left(\begin{array}{c}
        \sqrt{\frac{2-t}{d}}\xi\\
        \sqrt{\frac{d-2+t}{d}}H
        \end{array}\right) \text{ and } U(t) = \left(\begin{array}{c}
        \sqrt{\frac{t}{d}}\xi\\
        \sqrt{\frac{d-t}{d}}H^\prime
        \end{array}\right).
\]
By construction, $V(0)=\{v_1,\ldots, v_d\}$ and $U(0)=\{u_1,\ldots,u_d\}$, and $V(1)$ and $U(1)$ are both orthonormal bases. Moreover, the frame operators of $V(t)$ and $U(t)$ are
\[
\text{diag}\left(2-t,\frac{d-2+t}{d-1},\ldots,\frac{d-2+t}{d-1}\right) \text{ and } \text{diag}\left(t,\frac{d-t}{d-1},\ldots,\frac{d-t}{d-1}\right)
\]
so the union of $V(t)$ and $U(t)$ always forms a member of $\mathcal{F}_{2d,d}^\mathbb{R}$. By construction, we also have that $V(t)$ is an equiangular set with nonzero mutual inner products until $t=1$. Thus, $V(t)$ is NOD for $t\in[0,1)$ and hence the union of $V(t)$ and $U(t)$ is NOD for $t\in[0,1)$. By our choice of $h_1^\prime$, $h_1^\prime(1)$ has nonzero inner product with all of the vectors in $V(1)$, and hence we also get that the union of $V(1)$ and $U(1)$ is also NOD. This completes the second step of our proof.

Our final goal is to demonstrate that the set of all NOD unions of two orthonormal bases is connected. This task is divided into two parts. First, we show that if $\{i_k\}_{k=1}^d$ and $\{j_k\}_{k=1}^d$ form a partition of $[2d]$ and $(a,b)\in\{-1,1\}^2$ and if
\[
\mathcal{G}(\{i_k\}_{k=1}^d,\{j_k\}_{k=1}^d,a,b)
\]
is the set of all NOD frames $G=\{g_i\}_{i=1}^{2d}$ with $\{g_{i_k}\}_{k=1}^d$ and $\{g_{j_k}\}_{k=1}^d$ both orthonormal bases having orientations $a$ and $b$ respectively, then $\mathcal{G}(\{i_k\}_{k=1}^d,\{j_k\}_{k=1}^d,a,b)$ is path-connected. The final step is to then use Lemma \ref{twoNpermute} to permute the frame vectors, thus arriving at a NOD frame for which the first $d$ vectors are the standard orthonormal basis and the last $d$ vectors form another orthonormal basis. 

Let $G\in\mathcal{G}(\{i_k\}_{k=1}^d,\{j_k\}_{k=1}^d,a,b)$. For convenience, we first permute so that $\{i_k\}_{k=1}^d=\{k\}_{k=1}^d$ and $\{j_k\}_{k=1}^d=\{k\}_{k=d+1}^{2d}$ and $\{g_i\}_{i=1}^d$ and $\{g_i\}_{i=d+1}^{2d}$ are both positively oriented orthonormal bases. If we construct a path for this configuration, then we may invert the index permutation over the entire path to get a path through $\mathcal{G}(\{i_k\}_{k=1}^d,\{j_k\}_{k=1}^d,a,b)$. Additionally, by continuous rotation, we may assume that $\{g_i\}_{i=1}^d$ is the standard orthonormal basis. Now, consider $g_{d+1}$. There is a continuous rotation $U(t)$ and an $\varepsilon>0$ such that $(U(t)g_{d+1})_j\not = 0$ for all $j=1,\ldots,d$, and $g_{ji}\not = 0$ implies $(U(t)g_i)_j\not = 0$ for all $i=d+1,\ldots, 2d$ and all $t\in(0,\varepsilon)$. Thus, without loss of generality, we may assume that $g_{j, d+1}$ are all nonzero. We now show that we may assume that all of $g_{j,d+1}$ are strictly positive. Suppose $g_{j, d+1}$ is negative, and choose another coordinate index $i\not = j$. Without moving $g_{d+1}$, rotate $g_{d+2}$ so that its nonzero projection onto the span of $e_i$ and $e_j$ is not orthogonal or parallel to the projection of $g_{d+1}$ onto the span of $e_i$ and $e_j$. This is possible because $d>3$ and all of the entries of $g_{d+1}$ are nonzero, and since $g_{d+1}$ stays fixed and has all nonzero entries, the full frame remains NOD while $g_{d+2}$ rotates. At this point, we continuously rotate $\{g_i\}_{i=d+1}^{2d}$ in the span of $e_1$ and $e_2$ until the $i$ and $j$ entries of $g_{d+1}$ are strictly positive. Since the $j$th entry was negative, the intermediate value theorem tells us that this entry becomes zero at some point during this rotation. Thus, $e_i$ or $e_j$ may become orthogonal to $g_{d+1}$ at some point. However, our positioning of $g_{d+2}$ ensures that $g_{d+2}$ will have nonzero inner product with both $e_i$ and $e_j$ at these points. Thus, the entire frame remains NOD during this procedure. Once all of the entries of $g_{d+1}$ are all strictly positive, we continuously rotate $g_{d+1}$ to the vector $\frac{1}{\sqrt{d}}{\bf 1}_d$ while keeping all of the entries of $g_{d+1}$ strictly positive and hence the full frame remains NOD during this procedure. We are now done if we have path-connectivity of the set of frames such that $g_i=e_i$ for $i=1,\ldots, d$, $g_{d+1}=\frac{1}{\sqrt{d}}{\bf 1}_d$, and such that $\{g_i\}_{i=d+1}^{2d}$ is a positively oriented orthonormal basis. All of these frames are NOD, and the path-connectivity follows from the connectivity of $\mathcal{SO}(d-1)$. 

Since the union of the two orthonormal bases in Lemma \ref{twoNpermute} is NOD, we may swap two vectors between the orthonormal pairs by properly permuting the order of the collections, connecting to the frame exhibited in Lemma \ref{twoNpermute} using our above connectivity result, and then peforming the continuous swapping in Lemma \ref{twoNpermute}. Undoing the carefully chosen permutation produces the desired swapping of frame vectors without passing through OD frames. We then swap until the first $d$ and last $d$ vectors form two positively oriented orthonormal bases, and then connect this to a frame with the standard orthonormal basis as the first $d$ vectors and the constant vectors as the $(d+1)$th vector without passing through the OD frames. Since this set of frames is path-connected and contains no OD frames, and we can continuously connect any NOD unions of orthonormal bases by a path through NOD unions of orthonormal bases. This completes the proof.\end{proof}

\begin{example}
Here we give an example for the motion  in $\mathcal{F}_{6,3}^\mathbb{R}$. In Figure \ref{fig2} we see the motion from a frame consisting of a member of $\mathcal{F}_{3,2}^\mathbb{R}$ in the $x$-$y$ plane and the subframe with frame operator $\text{diag}(1,1,2)$. Without passing through the OD frames, we pull the vectors of the member of $\mathcal{F}_{3,2}^\mathbb{R}$ up and the subframe vectors down to get the second frame, which is a union of two orthonormal bases.

\begin{center}
\begin{figure}[h]
\subfigure[]{
\includegraphics[scale=0.3]{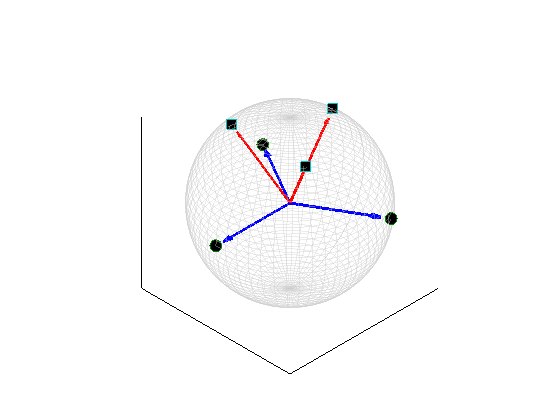}
\label{onb01}
}
\subfigure[]{
\includegraphics[scale=0.3]{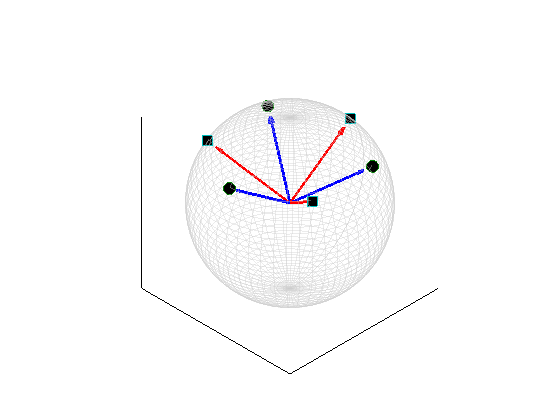}
\label{onb02}
}
\caption{Subfigure (a) illustrates the starting point: a union of a member of $\mathcal{F}_{3,2}^\mathbb{R}$ for a two-dimensional subspace of $\mathbb{R}^3$ and a subframe with frame operator $\text{diag}(1,1,2)$. Subfigure (b) shows the frame obtained by moving the vectors of the member of $\mathcal{F}_{3,2}^\mathbb{R}$ towards the top pole of the sphere and pushing the other vectors away.}
\label{fig2}
\end{figure}
\end{center}

\noindent For the swapping phase of the motion, we first align the orthonormal basis which complements the subframe with frame operator $\text{diag}(1,1,2)$. If we did not do this, the ensuing motion would pass through an OD frame. This is illustrated in Figure \ref{fig3}. Finally, Figure \ref{fig4} shows how the swapping motion uses successive spins of subsets to swap the position of the vectors labeled $\bigcirc$ and $\triangle$.

\begin{center}
\begin{figure}[h]
\subfigure[]{
\includegraphics[scale=0.3]{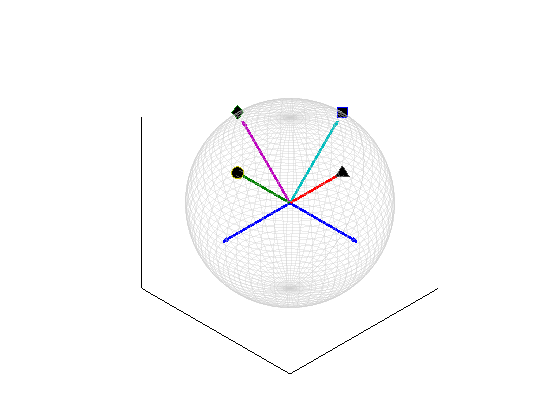}
\label{fm01}
}
\subfigure[]{
\includegraphics[scale=0.3]{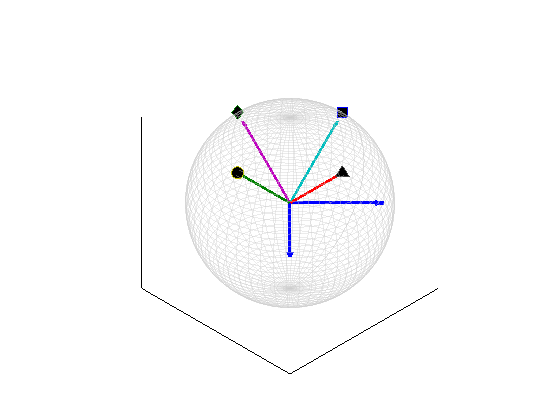}
\label{fm02}
}
\caption{Subfigure (a) indicates the starting position. Subfigure (b) shows how the orthonormal pair spins so that the remaining motions never pass through an OD frame.}
\label{fig3}
\end{figure}
\end{center}
\begin{center}
\begin{figure}[h]
\subfigure[]{
\includegraphics[scale=0.38]{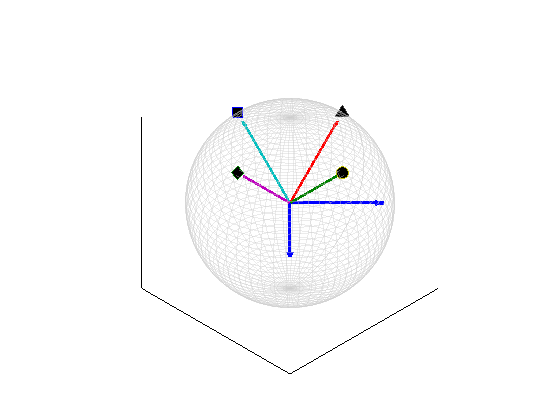}
\label{fm03}
}
\subfigure[]{
\includegraphics[scale=0.38]{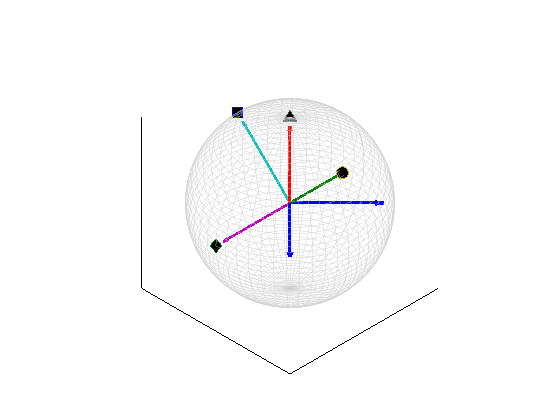}
\label{fm04}
}
\subfigure[]{
\includegraphics[scale=0.38]{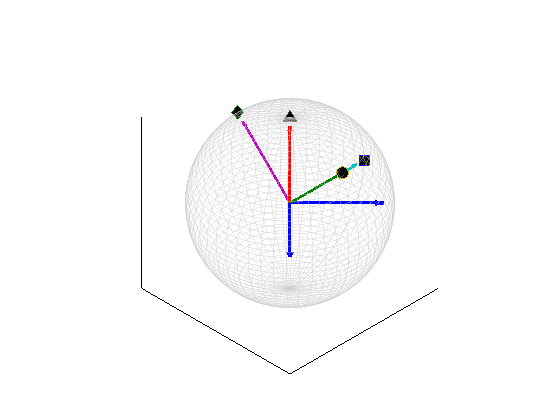}
\label{fm05}
}
\subfigure[]{
\includegraphics[scale=0.38]{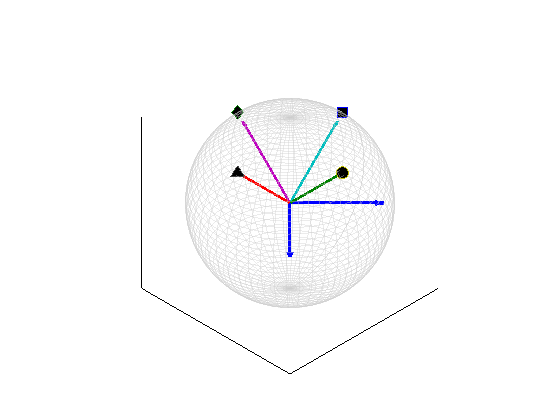}
\label{fm06}
}
\caption{Subfigure (a) shows the spinning of the vectors labeled $(\triangle\:\Box\:\Diamond\:\bigcirc)$ in Figure \ref{fig3}(b) to the ordering $(\bigcirc\:\triangle\:\Box\:\Diamond)$. The remaining figures illustrate how to perform a cycle of $(\triangle\:\Box\:\Diamond)$ to end at the ordering $(\bigcirc\:\Box\:\Diamond\:\triangle)$. In Subfigures (b) through (d), we spin $(\triangle\:\Box)$, $(\Diamond\:\Box)$, and finally $(\Box\:\triangle)$.}
\label{fig4}
\end{figure}
\end{center}

\end{example}

\noindent These last corollaries summarize all of the results of this section.

\begin{corollary}\label{cmplxNODconnect}
If $N$ and $d$ satisfy $N\geq d\geq 1$, then the NOD frames in $\mathcal{F}_{N,d}^\mathbb{C}$ form a path-connected set.
\end{corollary}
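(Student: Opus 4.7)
The plan is a straightforward case analysis over $(N,d)$, assembling the preceding theorems and using the Naimark complement to handle the small-$d$ cases where Theorem \ref{non-orthconnect} does not apply.

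First I would dispose of the degenerate cases. When $N=d$, every FUNTF is an orthonormal basis, which is OD via any nontrivial partition of $[d]$ (singleton versus the rest), so the NOD set is empty and the claim holds vacuously. When $N=d+1$, every FUNTF is a regular simplex; all pairwise inner products equal $-1/d\neq 0$, so the correlation network is complete and Proposition \ref{corrnet} forces the frame to be NOD, while Theorem \ref{cmplxconnect} gives path-connectivity of the entire space. When $d=1$ and $N\geq 2$, any nontrivial partition of $[N]$ yields $\mathrm{span}\{f_n\}_{n\in T}=\mathbb{C}$ and hence $\mathrm{span}^\perp\{f_n\}_{n\in T}=\{0\}$, so orthodecomposability is impossible; thus every FUNTF is NOD and $\mathcal{F}^{\mathbb{C}}_{N,1}$ is the path-connected torus $\mathbb{T}^N$.

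With those edge cases handled, assume $d\geq 2$ and $N\geq d+2$. If $d\geq 3$ I split according to whether $N=2d$: for $N\neq 2d$, Theorem \ref{non-orthconnect} applies directly, and for $N=2d$ the immediately preceding theorem on NOD frames in $\mathcal{F}^{\mathbb{C}}_{2d,d}$ applies.

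The remaining range is $d=2$ with $N\geq 4$. The case $N=4=2d$ is again covered by the $\mathcal{F}^{\mathbb{C}}_{2d,d}$ theorem. For $N\geq 5$ the hypothesis $d+2>4$ of Theorem \ref{non-orthconnect} fails, so I would invoke Proposition \ref{nodequivconnect} to transfer the problem to the Naimark complement $\mathcal{F}^{\mathbb{C}}_{N,N-2}$. Setting $d'=N-2\geq 3$, we have $N=d'+2$ and $N\neq 2d'$ (since $N\neq 2N-4$ whenever $N\neq 4$), so Theorem \ref{non-orthconnect} delivers NOD path-connectivity on the Naimark side, which pulls back to give the desired result in $\mathcal{F}^{\mathbb{C}}_{N,2}$. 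There is no real obstacle here: the only subtlety is remembering to route the $d=2$ range through the Naimark complement so as to land inside the hypothesis of Theorem \ref{non-orthconnect}.
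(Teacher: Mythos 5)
Your case analysis is correct and, modulo one trivial edge case, reconstructs exactly the implicit reasoning behind the paper's claim that this corollary merely ``summarizes'' the results of Section~5. The nontrivial routing step is recognizing that the hypothesis $d+2>4$ of Theorem~\ref{non-orthconnect} excludes $d=2$, so the range $d=2$, $N\geq 5$ must be pushed through Proposition~\ref{nodequivconnect} into the Naimark complement $\mathcal{F}^\mathbb{C}_{N,N-2}$, where $d'=N-2\geq 3$, $N=d'+2>4$, and $N\neq 2d'$ (since $d'\neq 2$); you make this explicit and verify the inequalities, and combined with the dedicated $\mathcal{F}^{\mathbb{C}}_{2d,d}$ theorem and the elementary cases $N\in\{d,d+1\}$ and $d=1$, your assembly covers the stated range. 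The only omission is $N=d=1$, which slips between two of your cases: your $N=d$ argument invokes a nontrivial partition of $[d]$, which does not exist when $d=1$ (so the lone unit vector is in fact NOD, not OD), while your $d=1$ argument is stated for $N\geq 2$. The case is trivially fine -- $\mathcal{F}_{1,1}^\mathbb{C}$ is the unit circle, no single-vector frame can be OD for want of a nontrivial partition, and the circle is path-connected -- but it should be folded in explicitly.
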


\begin{corollary}\label{realNODconnect}
 If $N$ and $d$ satisfy $N\geq d+2>4$ or $d=2$ and $N\geq 5$, then the NOD frames in $\mathcal{F}_{N,d}^\mathbb{R}$ form a path-connected set.
\end{corollary}

\section{Irreducibility of $\mathcal{F}_{N,d}^{\mathbb{F}}$ and consequences} 

Combining Corollaries~\ref{cmplxNODconnect} and~\ref{realNODconnect} and Lemma \ref{nod-irreducible}, Theorem \ref{irreducible} follows. In this section, we explore an interesting consequence of this irreducibility. In particular, we turn our attention to the demonstration of Theorem~\ref{fullspark}. By Proposition~\ref{realgeneric}, finishing our demonstration that a generic frame in $\mathcal{F}_{N,d}^\mathbb{F}$ is full spark just requires that there exists a full spark frame in $\mathcal{F}_{N,d}^\mathbb{F}$.

\begin{theorem}[Theorems~4 and~5 in~\cite{PK05}]\label{fullsparkexists}
For every $d$ and every $N\geq d$, there exists a full spark FUNTF $\{f_n\}_{n=1}^N\subseteq\mathbb{R}^d$.
\end{theorem}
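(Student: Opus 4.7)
The plan is to exhibit an explicit construction for every pair $(N,d)$ with $N \geq d$, using real harmonic frames associated to the cyclic group $\mathbb{Z}_N$. Concretely, I would choose a collection of characters of $\mathbb{Z}_N$, pair up complex-conjugate characters to obtain real rows of the form $n \mapsto \sqrt{2/d}\,\cos(2\pi j n/N)$ and $n \mapsto \sqrt{2/d}\,\sin(2\pi j n/N)$, and for the trivial character (and the order-two character when $N$ is even) use a single row $n \mapsto \sqrt{1/d}$. Stacking $d$ such rows produces a $d \times N$ real matrix $F$ whose columns are the candidate frame vectors.

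I would first verify the UNTF property. Tightness $FF^* = (N/d) I_d$ follows from Parseval/orthogonality of characters on $\mathbb{Z}_N$: paired $\cos/\sin$ rows at distinct nonzero frequencies are mutually orthogonal with equal squared norm $N/d$. The column norms are constant by the identity $\cos^2\theta+\sin^2\theta=1$, summed over the paired rows at each frequency; this yields $\|f_n\|^2=1$ after the normalization above. Thus $F \in \mathcal{F}_{N,d}^{\mathbb{R}}$.

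The heart of the argument is establishing full spark. Any $d\times d$ submatrix $F|_S$ indexed by a column set $S = \{n_1,\ldots,n_d\}\subset [N]$ can be written, after passing back to complex conjugate characters, as a product of a fixed unitary change-of-basis matrix and a Vandermonde-like matrix in the variables $\omega_{n_i} = e^{2\pi i n_i/N}$. In the purely complex harmonic-frame version with $N$ prime and any choice of $d$ characters, Chebotarev's theorem on the DFT asserts every minor is nonzero, so this establishes the complex case directly. For the real case I would deduce full spark by combining this with Proposition~\ref{naimark}(iii) and the observation that a real harmonic frame is the real part/Naimark-type reorganization of a complex harmonic frame, so nonvanishing of complex minors transfers to nonvanishing of the associated real Vandermonde-like determinants.

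The main obstacle is handling composite $N$, where Chebotarev's theorem fails and certain complex-harmonic minors genuinely vanish for generic character choices. My workaround would be a two-step reduction: (i) show that the set of UNTFs that are \emph{not} full spark is a proper real algebraic subvariety of $\mathcal{F}_{N,d}^{\mathbb{R}}$, cut out by the vanishing of at least one $\binom{N}{d}$ minor; (ii) use Theorem~\ref{realconnect} together with Proposition~\ref{realgeneric} to reduce the problem to exhibiting a \emph{single} full spark UNTF in $\mathcal{F}_{N,d}^{\mathbb{R}}$, which may be obtained by applying a small generic orthogonal perturbation to a harmonic frame (perturbation within the UNTF variety is available by lifting perturbations of eigensteps via Lemma~\ref{fulllift}). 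This reduces the combinatorial problem of controlling every $d$-subset of columns simultaneously to an existence statement, which is the most delicate point and where a genericity argument is essential.
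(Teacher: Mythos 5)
Note first that the paper does not prove this statement at all: it is quoted as a standing result (Theorems~4 and~5 of \cite{PK05}) and imported as a black box precisely so that, once combined with Proposition~\ref{realgeneric} and the connectivity results, genericity of full spark frames can be concluded in Theorem~\ref{fullspark}. So there is no ``paper's proof'' to compare against; your attempt must stand on its own as a self-contained existence construction for every $N\geq d$.

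Your construction is fine only for $N$ prime. Real harmonic frames built by pairing conjugate characters do give UNTFs, and when $N$ is prime Chebotarev's theorem on the DFT, together with the (correct) observation that the $d\times N$ real matrix is an invertible $d\times d$ change of basis applied to $d$ DFT rows, shows every $d\times d$ minor is nonzero. (Incidentally, this change-of-basis observation is pure linear algebra; appealing to Proposition~\ref{naimark}(iii) is not the right tool, since a Naimark complement of a Parseval frame for $\mathbb{F}^d$ lives in $\mathbb{F}^{N-d}$ over the \emph{same} field, not a real or complex form of the other.) For composite $N$, however, Chebotarev fails outright, and your fallback plan~(ii) is circular: Proposition~\ref{realgeneric} tells you that the set of full spark FUNTFs is either empty or generic in $\mathcal{F}_{N,d}^\mathbb{R}$, so ``reducing to exhibiting a single full spark UNTF'' is not a reduction at all---it \emph{is} the statement you are trying to prove. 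Likewise, ``a small generic perturbation of a harmonic frame along the FUNTF variety is full spark'' presupposes that full spark frames are generic, which again requires existence. To close the gap you need an explicit construction (or some other non-genericity argument) valid for all $N\geq d$; the standard route, as in \cite{PK05}, is a Vandermonde/Chebyshev-type construction in which every $d\times d$ minor factors as a nonzero multiple of a Vandermonde determinant in distinct real nodes, which avoids Chebotarev entirely and works for arbitrary $N$.
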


\begin{proof}[Proof of Theorem \ref{fullspark}]
By Theorem \ref{fullsparkexists}, the set of full spark frames in $\mathcal{F}_{N,d}^\mathbb{F}$ is nonempty, and hence the real generic property holds by Proposition \ref{realgeneric} since the nonsingular points of $\mathcal{F}_{N,d}^\mathbb{F}$ form a connected dense subset (Corollaries~\ref{cmplxNODconnect} and~\ref{realNODconnect}, the discussion preceding Lemma \ref{nod-irreducible}, and Lemma~\ref{nod-dense}).\end{proof}

\section{Discussion}
Proposition~\ref{realgeneric} is often identified with the additional property that $V\cap U$ is either a null set or has full measure in $V$. This additional property presupposes the existence of a uniform measure on $\mathcal{F}_{N,d}^\mathbb{F}$. If the algebraic variety happens to also be a manifold, we can be sure that this uniform distribution exists (see \cite{Stroock}, for example). On the other hand, in private communications with Christopher Manon of George Mason University, it has been suggested that the theory of Duistermaat-Heckman measures \cite{DH82} allows us to induce a uniform distribution on $\mathcal{F}_{N,d}^\mathbb{F}$ by (1) drawing a point uniformly from the the polytope of eigensteps, (2) uniformly drawing from the possible $U_1$ and the $V_n$ in Theorem 7 of \cite{CFMPS12}, and (3) reconstructing the frame from the iteration from the data provided in (1) and (2), as indicated by Theorem 7 of \cite{CFMPS12}. Because this procedure is still under active research, we only definitively say that the full spark frames have full measure in the uniform measure of $\mathcal{F}_{N,d}^\mathbb{F}$ when $N$ and $d>2$ are relatively prime, and we leave the full examination of this question as a future project.

Another interesting question is whether these results can be extended to the infinite-dimensional setting. Since the eigensteps construction is our primary tool, the first step of this process would involve a generalization of this construction. One could also study whether similar results hold for sets of frames with different frame operators and different norms of the frame vectors.

\section*{Acknowledgements}
We would like to thank Bernhard Bodmann, Gitta Kutyniok, and Tim Roemer for organizing the American Institute of Mathematics workshop ``Frame Theory intersects Geometry" where this work began. We thank the American Institute of Mathematics for their great generosity. We also would like to thank Eva-Maria Feichtner and Emily King for organizing the workshop ``Frames and Algebraic \& Combinatorial Geometry," which has brought more light to the application of algebraic geometry in frame theory.  J.\ Cahill was supported by NSF Grant No.\ ATD-1321779. D.\ G.\ Mixon was supported by an AFOSR Young Investigator Research
Program award, NSF Grant No.\ DMS-1321779, and AFOSR Grant No.\ F4FGA05076J002. N.\ Strawn was supported by NSF Grant No.\ DMS-10-45153. The views expressed in this article are those of the authors and do not reflect the official policy or position of the United States Air Force, Department of Defense, or the U.S. Government.

\bibliographystyle{amsplain}

\end{document}